\date{}
\newcounter{lemma}
\newcounter{corollary}
\newcounter{remark}
\newcounter{theorem}
\newcounter{proposition}
\newcounter{example}
\begin{document}

\markboth{D.~ROMASH, E.~SEVOST'YANOV}{\centerline{On uniformly
lightness ...}}

\def\cc{\setcounter{equation}{0}
\setcounter{figure}{0}\setcounter{table}{0}}

\overfullrule=0pt


\title{{\bf On uniformly lightness of ring mappings and convergence to a homeomorphism}}

\author{D.~ROMASH$^1$\footnote{dromash@num8erz.eu}, E.~SEVOST'YANOV$^{1,
2}$\footnote{Corresponding author, esevostyanov2009@gmail.com}}

\maketitle

{\small $^{1}$Zhytomyr Ivan Franko State University, 40, Velyka
Berdychivs'ka Str., 10 008  Zhytomyr, UKRAINE

$^{2}$Institute of Applied Mathematics and Mechanics of NAS of
Ukraine, 19 Henerala Batyuka Str., 84 116 Slov'yans'k, UKRAINE

{\bf Key words and phrases:} mappings with finite and bounded
distortion, quasiconformal mappings, lower distance estimates

\medskip

\maketitle

\medskip
{\bf Abstract.} A family of mappings is called uniformly light if
the image of the continuum under these mappings cannot be contracted
to a point under the sequence of mappings of the family. In this
paper, we are interested in the problem of the uniform lightness of
a family of homeomorphisms satisfying upper moduli inequalities. We
have shown that a family of such homeomorphisms satisfies the
above-mentioned condition of uniform lightness if the majorant
participating in the modulus estimate defining the family is
integrable over almost all spheres. Under the same conditions, we
show that this family of homeomorphisms is uniformly open, i.e.,
their image contains a ball of fixed radius, independent of each
mapping separately. As an application of the results obtained, we
have proved the assertion about the uniform convergence of
homeomorphisms to a homeomorphism.

\bigskip
{\bf 2010 Mathematics Subject Classification: Primary 30C65;
Secondary 31A15, 31B25}

\medskip
\centerline{\bf 1. Introduction}

\medskip
In the theory of quasiconformal mappings, theorems containing
properties of the limit mapping in some class are of great
importance. In particular, the following statement is true (see,
e.g., \cite[Theorems~21.9, 21.11]{Va}).

\medskip
{\bf Theorem~A.} {\it Let $D$ be a domain in ${\Bbb R}^n,$
$n\geqslant 2,$ and let $f_j:D\rightarrow {\Bbb R}^n,$ $j=1,2,\ldots
,$ be a sequence of $K_0$-quasiconformal mappings, $1\leqslant
K_0<\infty,$ that converges to some mapping $f:D\rightarrow
\overline{{\Bbb R}^n}$ locally uniformly in $D$ by the metric $h.$
Then $f$ is either a homeomorphism $f:D\rightarrow {\Bbb R}^n,$ or a
constant $c\in\overline{{\Bbb R}^n}.$}

\medskip
Similar statements have been repeatedly proven by different authors
in other formulations, including classes of mappings, more general
than quasiconformal. In particular, the following statement is true
(see, e.g., \cite[Theorem~4.2]{RS}, cf.~\cite[Theorem~1]{Cr}).

\medskip
{\bf Theorem~B.} {\it Let $D$ be a domain in ${\Bbb R}^n,$ $n\geqslant
2,$ and let $Q:D \rightarrow (0,\infty)$ be a measurable function
such that
\begin{equation}\label{eq2AA}
\int\limits_{0}^{\varepsilon(x_0)}\frac{dr}{rq_{x_0}^{\frac{1}{n-1}}(r)}=\infty\quad
\quad \forall\,x_0\in D
\end{equation}
for a positive $\varepsilon(x_0)< {\rm dist}\, (x_0,
\partial D)$ where $q_{x_0}(r)$ denotes the average of
$Q(x)$ over the sphere $|x-x_0|=r.$ Suppose that $f_m,$
$m=1,2,\ldots,$ is a sequence of ring $Q$-ho\-me\-o\-mor\-phisms
from $D$ into ${\Bbb R}^n$ converging locally uniformly to a mapping
$f.$ Then the mapping $f$ is either a constant in $\overline{{\Bbb
R}^n}$ or a homeomorphism into ${\Bbb R}^n.$ }

\medskip
The definition of ring $Q$-homeomorphisms used above may be found
in~\cite{MRSY} (it is also given below in the text as one of the key
definitions of our manuscript). Some analogue of Theorem~B was also
proved under the condition $Q\in L^1(D)$ for some (narrower than
ring $Q$-homeomorphisms) class of strong ring $Q$-homeomorphisms
(see Theorem~7.7 in~\cite{MRSY}). The main purpose of this paper is
to show that the result of Theorem~B is valid under condition~$Q\in
L^1(D).$ Such a result has not been obtained anywhere so far, and
its proof requires the development of a technique slightly different
from that which has already been used. We should also note that in
our recent paper~\cite{IRS} a certain result on this topic was
obtained, but only in a special case where the mapped domains have a
special geometry. The purpose of the present paper is to establish
this result without any restrictions on the mapped domains.

\medskip
Before the study the convergence problems, we investigate some other
problems, which have independent interest and play a role in proving
the main result of the work. First, we show that under some not very
strong restrictions the family of homeomorphisms under the
investigation is uniformly light, i.e., the image of the continuum
under such mappings cannot decrease infinitely. We also show that
the considered family of mappings is uniformly open, i.e., the image
of a ball under this family contains a ball of fixed radius. These
studies contain the ideas and approaches developed by us in
works~\cite{IRS} and~\cite{ST}, however, the research technique used
here is much broader and deeper.

\medskip
Let $x_0\in\overline{D},$ $x_0\ne\infty,$
$$
B(x_0, r)=\{x\in {\Bbb R}^n: |x-x_0|<r\}\,, \quad {\Bbb B}^n:=B(0,
1)\,,$$
\begin{equation}\label{eq1ED}
S(x_0,r) = \{ x\,\in\,{\Bbb R}^n : |x-x_0|=r\}\,,\end{equation}
$${\Bbb S}^{n-1}:=S(0, 1)\,,$$
\begin{equation}\label{eq1**A} A=A(x_0, r_1, r_2)=\{ x\,\in\,{\Bbb R}^n :
r_1<|x-x_0|<r_2\}\,.
\end{equation}
In what follows, $\Omega_n$ denotes the volume of the unit ball
${\Bbb B}^n$ in ${\Bbb R}^n,$ and $\omega_{n-1}$ denotes the area of
the unit sphere ${\Bbb S}^{n-1}$ in ${\Bbb R}^n.$

\medskip
Let $S_i=S(x_0, r_i),$ $i=1,2,$ where spheres $S(x_0, r_i)$ centered
at $x_0$ of the radius $r_i$ are defined in~(\ref{eq1ED}). Let
$Q:{\Bbb R}^n\rightarrow {\Bbb R}^n$ be a Lebesgue measurable
function satisfying the condition $Q(x)\equiv 0$ for $x\in{\Bbb
R}^n\setminus D.$ A mapping $f:D\rightarrow \overline{{\Bbb R}^n}$
is called a {\it ring $Q$-mapping at the point $x_0\in
\overline{D}\setminus \{\infty\}$}, if the condition
\begin{equation} \label{eq2*!A}
M(f(\Gamma(S_1, S_2, D)))\leqslant \int\limits_{A\cap D} Q(x)\cdot
\eta^n (|x-x_0|)\, dm(x)
\end{equation}
holds for all $0<r_1<r_2<d_0:=\sup\limits_{x\in D}|x-x_0|$ and all
Lebesgue measurable functions $\eta:(r_1, r_2)\rightarrow [0,
\infty]$ such that
\begin{equation}\label{eq8BC}
\int\limits_{r_1}^{r_2}\eta(r)\,dr\geqslant 1\,.
\end{equation}
A mapping $f$ is called a {\it ring $Q$-mapping in $D,$} if
condition~(\ref{eq2*!A}) is satisfied at every point $x_0\in D,$ and
a {\it ring $Q$-mapping in $\overline{D},$} if the
condition~(\ref{eq2*!A}) holds at every point $x_0\in\overline{D}.$
For the properties of such mappings see, e.g., \cite{MRSY}. Let $h$
be a chordal metric in $\overline{{\Bbb R}^n},$
$$
h(x,\infty)=\frac{1}{\sqrt{1+{|x|}^2}}\,,\quad
h(x,y)=\frac{|x-y|}{\sqrt{1+{|x|}^2} \sqrt{1+{|y|}^2}}\qquad x\ne
\infty\ne y\,,
$$
and let $h(E):=\sup\limits_{x,y\in E}\,h(x,y)$ be a chordal diameter
of a set~$E\subset \overline{{\Bbb R}^n}.$ We also use the relations
$h(A ,B)=\inf\limits_{x\in A, y\in B}h(x, y),$
$d(E):=\sup\limits_{x,y\in E}\,|x-y|$ and $d(A ,B)=\inf\limits_{x\in
A, y\in B}|x-y|.$

\medskip
An analogue of Theorem~\ref{th4} given below was obtained earlier in
\cite[Theorem~4.2]{RS} for similar classes of mappings under some
more stringent conditions on the function $Q,$ cf. Theorems~A and B,
Theorem~7.7 in~\cite{MRSY}, \cite[Corollary~21.3]{Va} and
\cite[Theorem~1]{Cr}.

\medskip
\begin{theorem}\label{th4}
{\it\, Let $D$ be a domain in ${\Bbb R}^n,$ $n\geqslant 2,$ let
$Q:D\rightarrow[0, \infty]$ be a Lebesgue measurable function and
let $f_m:D\rightarrow \overline{{\Bbb R}^n},$ $m=1,2,\ldots, $ be a
sequence of homeomorphisms satisfying the relations (\ref{eq2*!A})
and (\ref{eq8BC}) for all $x_0\in D$ and all $0<r_1<r_2<r_0:={\rm
dist\,}(x_0, \partial D).$ Let $Q\in L_{\rm loc}^1(D).$ If $f_m$
converges to $f:D\rightarrow\overline{{\Bbb R}^n}$ locally
uniformly, then $f$ either a homeomorphism $f:D\rightarrow
\overline{{\Bbb R}^n},$ or a constant $c\in \overline{{\Bbb R}^n}.$

If, in addition, $f_m(x)\ne \infty$ for all $x\in D,$ then $f$
either a homeomorphism $f:D\rightarrow {\Bbb R}^n,$ or a constant
$c\in \overline{{\Bbb R}^n}.$ }
\end{theorem}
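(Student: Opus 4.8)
The plan is to reduce the statement to the two mutually exclusive alternatives by a classical argument: first establish that the limit $f$ is either injective or constant, and then, in the injective case, upgrade injectivity to the full homeomorphism conclusion using the invariance-of-domain type results that are available for ring $Q$-mappings. The starting observation is that a locally uniform limit of continuous maps is continuous, so the only real content is the dichotomy and the openness of $f$ in the non-constant case.

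\emph{Step 1: uniform lightness and uniform openness.} Following the program announced in the introduction, I would first invoke (or re-derive in the form needed) the uniform lightness statement: there is a function $\delta(\cdot)$ such that for every continuum $C\subset D$ the chordal (or Euclidean, in the finite case) diameter of $f_m(C)$ is bounded below by $\delta(C)>0$ uniformly in $m$. The key analytic input here is the lower bound on the modulus of a ring: if $h(f_m(C))$ were small, the spherical ring separating $f_m(C)$ from $f_m(\partial$ of a small ball$)$ would have large modulus, while the modulus inequality (\ref{eq2*!A}) together with $Q\in L^1_{\rm loc}$ and the standard choice $\eta(r)=1/(r_2-r_1)$ (or a logarithmic choice) forces that modulus to be bounded — contradiction. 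The same circle of ideas, applied to a fixed ball $\overline{B(x,\rho)}\subset D$, gives uniform openness: $f_m(B(x,\rho))\supset B(f_m(x),r^*)$ for a radius $r^*>0$ independent of $m$.

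\emph{Step 2: the dichotomy for $f$.} Suppose $f$ is not constant; I must show $f$ is injective. Assume $f(a)=f(b)$ with $a\ne b$. Choose disjoint continua $C_a\ni a$ and $C_b\ni b$ in $D$. By Step 1, $h(f_m(C_a))\ge\delta_a>0$ and $h(f_m(C_b))\ge\delta_b>0$ uniformly in $m$; passing to the limit, $h(f(C_a))\ge\delta_a$, $h(f(C_b))\ge\delta_b$, so in particular $f$ is non-constant on both continua. Now I use a second modulus estimate: separating $f_m(C_a)$ from $f_m(C_b)$ by a spherical ring centered at $f(a)=f(b)$ shows that the modulus of $\Gamma(f_m(C_a),f_m(C_b))$ tends to $\infty$ as $m\to\infty$ (because the two images are being crushed together near the common limit value), while on the preimage side the ring $\Gamma(C_a,C_b,D)$ has modulus bounded above by the integral in (\ref{eq2*!A}), which is finite and independent of $m$ — contradiction. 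Hence $f$ is injective.

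\emph{Step 3: from injective continuous to homeomorphism.} A continuous injective map of a domain $D\subset{\Bbb R}^n$ into ${\Bbb R}^n$ is, by Brouwer invariance of domain, a homeomorphism onto an open subset; this handles the second ("in addition") assertion immediately once $f(D)\subset{\Bbb R}^n$. For the first assertion, where $f$ may take the value $\infty$, one works in $\overline{{\Bbb R}^n}\cong{\Bbb S}^n$: a continuous injection of the $n$-manifold $D$ into the $n$-sphere is again open by invariance of domain, so $f:D\to\overline{{\Bbb R}^n}$ is a homeomorphism onto its image. (One should note that $f$ cannot be constant \emph{and} take more than one value, so the two alternatives are genuinely exclusive; and that in the non-constant case the preimage $f^{-1}(\infty)$ is a single point, consistent with injectivity.)

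\emph{Main obstacle.} The delicate point is Step 2 — more precisely, extracting from the modulus inequality a \emph{uniform} lower bound that survives the limit. One has to show that as $f_m(C_a)$ and $f_m(C_b)$ approach the common value $f(a)$, the separating ring can be taken inside $f_m(D)$ with inner and outer radii that shrink and the ratio blowing up, so that $M(\Gamma(f_m(C_a),f_m(C_b)))\to\infty$; this requires the uniform lightness of Step 1 to guarantee that $f_m(C_a)$ and $f_m(C_b)$ do not themselves degenerate, and a careful bookkeeping of where the relevant spheres lie. The integrability hypothesis $Q\in L^1_{\rm loc}$ enters exactly here: it is what keeps the right-hand side of (\ref{eq2*!A}) finite under the natural test function $\eta$, replacing the divergence-type condition (\ref{eq2AA}) used in Theorem~B, and making the technique "slightly different" as the introduction warns.
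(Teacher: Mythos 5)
Your outline heads roughly in the right direction, but Step~1 as you have stated it is simply false, and it is precisely here that the real content of the paper lies. You claim: for every continuum $C\subset D$ there is $\delta(C)>0$ such that $h(f_m(C))\geqslant\delta(C)$ uniformly in $m$, for any sequence $(f_m)$ satisfying (\ref{eq2*!A})--(\ref{eq8BC}) with $Q\in L^1_{\rm loc}$. The paper's own Example~1 refutes this: $f_m(x)=mx$ on a bounded domain $D$ is a ring $Q$-mapping with $Q\equiv1\in L^1(D)$, yet $h(f_m(C))\to0$ for every compact $C\subset D$ not containing the origin (the images escape to $\infty$). Uniform lightness in the paper is a property of the \emph{constrained} classes $\frak{F}^{\delta}_{K,Q}$, $\frak{A}^{\delta,r}_{A,Q}$ or $\frak{B}^{\delta,K}_{A,Q}$, each of which has a built-in anchor preventing degeneration: either $h(f(K),\partial f(D))\geqslant\delta$, or $h(f(A))\geqslant\delta$ together with either a size condition on boundary components or confinement to a fixed compactum $K\subsetneq\overline{{\Bbb R}^n}$. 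The actual proof must first \emph{place} the sequence $f_m$ into one of these classes. This is done by exploiting the non-constancy of $f$: one picks $a,b$ near the conjectured degeneracy with $h(f(a),f(b))=\delta>0$, sets $A$ to be an arc joining them, and uses local uniform convergence to get $h(f_m(A))\geqslant\delta/2$ and $f_m(B(x_0,\varepsilon_0))\subset\overline{B_h(f(x_0),1/2)}=:K$ for large $m$. Only then do Theorem~\ref{th2A} and Corollary~\ref{cor6} (resp. the lemmas on continua approaching the boundary) deliver the uniform lower distance estimate. Your ``Main obstacle'' paragraph gestures at this but does not supply the missing anchor, which is the single idea that distinguishes this argument from the earlier ones under condition~(\ref{eq2AA}).

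There is also a smaller but genuine route difference worth noting, beyond the gap above. You aim to prove outright injectivity of $f$ and then invoke Brouwer's invariance of domain. The paper only proves \emph{discreteness} of $f$ (assuming $x_k\to x_0$, $x_k\ne x_0$, $f(x_k)=f(x_0)$, and deriving a contradiction from the uniform lower bound $|f_m(x_k)-f_m(x_0)|\geqslant C_1\exp\{-\Vert Q\Vert_1/(C|x_k-x_0|^n)\}$ after $m\to\infty$), and then calls \cite[Theorem~3.1]{RS} to upgrade a discrete limit of ring $Q$-homeomorphisms to a homeomorphism. Working locally with discreteness keeps all modulus estimates inside a fixed ball $B(x_0,\varepsilon_0)$ where $Q\in L^1$, which is exactly what $Q\in L^1_{\rm loc}$ (and not $Q\in L^1(D)$) permits; trying to separate two \emph{arbitrary} continua $C_a,C_b$ by spherical rings, as you propose, would require either a globally integrable $Q$ or a nontrivial localization argument which your sketch does not contain. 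Finally, in the modulus contradiction itself you must compare $M(\Gamma(f_m(C_a),f_m(C_b),\overline{{\Bbb R}^n}))$ (which the Loewner estimate~(\ref{eq3C}) makes large) with $M(\Gamma(f_m(C_a),f_m(C_b),f_m(D)))$ (which (\ref{eq2*!A}) makes small after a minorization $\Gamma(C_a,C_b,D)>\Gamma(S_1,S_2,A)$); passing from the first to the second costs a factor $2$ and is valid only when a ball containing both image continua lies inside $f_m(D)$ — Vuorinen's inequality, Proposition~\ref{pr1} — and that containment is again exactly what the constrained-class anchor guarantees. Your sketch does not mention this passage at all.
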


\medskip
Let $D$ be a domain in ${\Bbb R}^n,$ $n\geqslant 2,$ and let $K$ be
a compact subset of $D.$ A family $\frak{F}$ of mappings
$f:D\rightarrow\overline{{\Bbb R}^n}$ will be called {\it uniformly
light on $K,$} if the following condition holds: given
$\varepsilon>0$ there is $\delta_1(\varepsilon, K)>0$ such that
$h(f(C))\geqslant \delta_1$ for any $f\in \frak{F}$ and any
continuum $C\subset K$ with $h(C)\geqslant \varepsilon.$ A family
$\frak{F}$ of mappings $f:D\rightarrow\overline{{\Bbb R}^n}$ will be
called {\it equi-uniform on $K,$} if the following condition holds:
given $\varepsilon>0$ there is $\delta_1(\varepsilon, K)>0$ such
that $h(f(x), f(y))\geqslant \delta_1$ for any $f\in \frak{F}$ and
any $x, y\in K$ with $h(x, y)\geqslant \varepsilon.$ Obviously, if a
family is equi-uniform on $K,$ then it also is uniformly light
on~$K.$

\medskip
Given $x_0\in\overline{{\Bbb R}^n}$ and $r>0$ we set $B_h(x_0,
r):=\{x\in \overline{{\Bbb R}^n}: h(x, x_0)<r\}.$ A family
$\frak{F}$ of mappings $f:D\rightarrow\overline{{\Bbb R}^n}$ is
called {\it uniformly open at a point $x_0\in D,$} if for every
$\varepsilon_0>0,$ $\varepsilon_0<{\rm dist}\,(x_0, \partial D),$
there is $r_0=r_0(x_0, \varepsilon_0)>0$ such that $B_h(f(x_0),
r_0)\subset f(B(x_0, \varepsilon_0))$ for every $f\in \frak{F}.$ A
family $\frak{F}$ of mappings $f:D\rightarrow\overline{{\Bbb R}^n}$
is called {\it uniformly open on $K,$} if for every
$\varepsilon_0>0$ there exists $r_0=r_0(K, \varepsilon_0)>0$ such
that $B_h(f(x_0), r_0)\subset f(B(x_0, \varepsilon_0))$ for every
$f\in \frak{F}$ and every $B(x_0, \varepsilon_0)\in K.$

\medskip
Given a domain $D$ in ${\Bbb R}^n,$ $n\geqslant 2,$ a Lebesgue
measurable function $Q:D\rightarrow [0, \infty],$ a compact set
$K\subset D$ and a number $\delta>0$ denote by
$\frak{F}^{\delta}_{K, Q}(D)$ a family of homeomorphisms
$f:D\rightarrow \overline{{\Bbb R}^n}$ satisfying the
relations~(\ref{eq2*!A})--(\ref{eq8BC}) at any point $x_0\in D$ for
all $0<r_1<r_2<d_0:=\sup\limits_{x\in D}|x-x_0|$ such that $h(f(K),
\partial f(D))=\inf\limits_{x\in f(K), y\in \partial f(D)}
h(x, y)\geqslant \delta.$ The following results hold.

\medskip
\begin{theorem}\label{th1}
{\it\,Let $D$ be a domain in ${\Bbb R}^n,$ $n\geqslant 2.$ Assume
that, for each point $x_0\in D$ and for every
$0<r_1<r_2<r_0:=\sup\limits_{x\in D}|x-x_0|$ there is a set
$E_1\subset[r_1, r_2]$ of a positive linear Lebesgue measure such
that the function $Q$ is integrable with respect to
$\mathcal{H}^{n-1}$ over the spheres $S(x_0, r)$ for every $r\in
E_1.$

\medskip
1) If $D$ is bounded, then the family $\frak{F}^{\delta}_{K, Q}(D)$
is uniformly light on $K;$ moreover, $\frak{F}^{\delta}_{K, Q}(D)$
is equi-uniform on $K.$

\medskip
2) if $Q\in L^1(D)$ and $f(x)\ne\infty$ for every $x\in D,$ then
there exist constants $C, C_1>0$ such that
\begin{equation}\label{eq1D}
|f(x)-f(y)|\geqslant C_1\cdot\exp\left\{-\frac{2\Vert
Q\Vert_1}{C|x-y|^n}\right\}
\end{equation}
for all $x, y\in K$ and every $f\in \frak{R}^{\delta}_{K, Q}(D).$}
\end{theorem}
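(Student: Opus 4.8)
The plan is to reduce both assertions to a lower estimate for the modulus of a family of curves joining two "small" continua to a "large" target, and to exploit the ring inequality (\ref{eq2*!A}) in the reverse direction. Fix $f\in\frak{F}^{\delta}_{K,Q}(D)$ and two points $x,y\in K$. Let $C_1$ be a continuum in $D$ containing $x$ and $y$ (for instance a segment, or a continuum of small chordal diameter if one is proving uniform lightness) and let $C_2$ be a "boundary-reaching" continuum, e.g. a curve from a point of $f(K)$ to $\partial f(D)$ pulled back by $f$. Because $f$ is a homeomorphism, $\Gamma(f(C_1),f(C_2),f(D))=f(\Gamma(C_1,C_2,D))$, and the standard Loewner-type lower bound for the modulus of a family of curves connecting two continua in ${\Bbb R}^n$ (through $f(D)$, a domain whose image contains a ball of radius controlled by $\delta$) gives
\begin{equation}\label{eqplanA}
M\bigl(f(\Gamma(C_1,C_2,D))\bigr)\geqslant \Phi\!\left(\frac{h(f(C_1))}{\delta}\right)
\end{equation}
for a positive decreasing function $\Phi$ depending only on $n$; here one uses $h(f(K),\partial f(D))\geqslant\delta$ to keep $f(C_2)$ at definite chordal distance from $f(C_1)$ and of definite size. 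The key point is that the left side of (\ref{eqplanA}) is bounded \emph{above} by the right side of (\ref{eq2*!A}) applied at $x_0=x$ with suitable radii $r_1=|x-y|$, $r_2$ just below ${\rm dist}\,(x,\partial D)$.

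For the upper bound on the modulus I would choose the test function $\eta$ supported on $E_1\cap(r_1,r_2)$, normalized by $\int_{r_1}^{r_2}\eta\,dr=1$, so that
$$\int_{A\cap D}Q(x')\,\eta^n(|x'-x|)\,dm(x')=\int_{r_1}^{r_2}\eta^n(r)\Bigl(\int_{S(x,r)}Q\,d\mathcal{H}^{n-1}\Bigr)dr=:\int_{r_1}^{r_2}\eta^n(r)\,\widetilde q_x(r)\,dr,$$
which is finite for a.e.\ $r$ by the hypothesis that $Q$ is $\mathcal{H}^{n-1}$-integrable over $S(x,r)$ for $r$ in a positive-measure set. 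In part 1), when $D$ is bounded, I would simply take $\eta(r)=\frac{1}{r_2-r_1}$ on all of $(r_1,r_2)$ (shrinking $E_1$ is unnecessary once $Q\in L^1$ near the spheres, but the positive-measure hypothesis guarantees the integral is genuine rather than $+\infty$), obtaining
$$M\bigl(f(\Gamma(C_1,C_2,D))\bigr)\leqslant \frac{1}{(r_2-r_1)^n}\int_{r_1}^{r_2}\widetilde q_x(r)\,dr\leqslant \frac{\Vert Q\Vert_{L^1(D)}}{(r_2-r_1)^n};$$
combining this with (\ref{eqplanA}) and the fact that $r_2-r_1$ is bounded since $D$ is bounded yields a positive lower bound for $h(f(C_1))$ depending only on $\varepsilon$, $K$ and $\delta$ — first for $C_1=[x,y]$ (giving equi-uniformity), and then for an arbitrary continuum $C\subset K$ with $h(C)\geqslant\varepsilon$ by taking $x,y\in C$ with $h(x,y)$ comparable to $\varepsilon$ (giving uniform lightness). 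For part 2) one optimizes the same estimate: with $\eta(r)=\frac{1}{r_2-r_1}$, $r_1=|x-y|$ and $r_2\to{\rm dist}\,(x,\partial D)$, one gets $M\leqslant \frac{2\Vert Q\Vert_1}{C|x-y|^n}$ after absorbing the geometric constant from (\ref{eqplanA}) into $C$, and using that $\Phi^{-1}$ of the right-hand side behaves like $C_1\exp\{-M/C\}$ for large $M$ — which is exactly the form of the Loewner capacity estimate in ${\Bbb R}^n$ — delivers (\ref{eq1D}).

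The main obstacle I anticipate is making the lower modulus bound (\ref{eqplanA}) uniform in $f$: one must produce, for each $f$, a target continuum $f(C_2)$ that simultaneously (i) lies in $f(D)$, (ii) stays chordal-distance $\gtrsim\delta$ from $f(C_1)$, and (iii) has chordal diameter $\gtrsim\delta$, so that the classical estimate for the modulus of the family of curves connecting two continua — which is monotone in the "relative distance" between them — applies with constants independent of $m$. This is where the hypothesis $h(f(K),\partial f(D))\geqslant\delta$ is used essentially; without it the image could be pinched and the modulus could degenerate. A secondary technical point is handling the chordal versus Euclidean metrics (the modulus lower bound is cleanest in the Euclidean/spherical setting), and the possibility $f(x_0)=\infty$ in part 1) — this is dealt with by working on the sphere $\overline{{\Bbb R}^n}$ with the chordal metric throughout, and is precisely why part 2), where the extra hypothesis $f(x)\neq\infty$ rules out this case, can upgrade to the sharp Euclidean estimate (\ref{eq1D}).
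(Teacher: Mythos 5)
Your plan captures the broad philosophy (pit a Loewner lower modulus bound against the ring upper bound), but it skips exactly the geometric work that makes that philosophy feasible, and one step is set up in a way that would not work as written.

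First, your Loewner inequality \eqref{eqplanA} is not in the right form. You place the argument $h(f(C_1))/\delta$ inside $\Phi$, where $f(C_1)$ is the small image continuum and $f(C_2)$ the large one. The Loewner bound controls $M\geqslant\Phi_n(\Delta(E,F))$ with $\Delta(E,F)={\rm dist}(E,F)/\min\{d(E),d(F)\}$, so if $d(f(C_1))$ is small it sits in the minimum and drives $\Delta$ \emph{up}, not down — making $\Phi$ useless. What the paper does instead (Lemma~\ref{lem1}) is manufacture \emph{two} continua $E,F\subset\overline{B(f(x),\delta/2)}$, both of diameter $\geqslant\delta/4$, with ${\rm dist}(E,F)\leqslant|f(x)-f(y)|$; then $\Delta(E,F)\leqslant 2|f(x)-f(y)|/\delta$ is genuinely small and $\Phi_n$ yields $C\log\frac{\delta}{2|f(x)-f(y)|}$. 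These $E,F$ are the $f$-images of pieces of the two rays $r|_{[1,t_1)}$, $r|_{(t_2,0]}$ extending the segment $[x,y]$ towards $\partial D$.

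Second, you base the upper bound on the ring inequality centred at $x_0=x$ with $r_1=|x-y|$. For the minorization $\Gamma(C_1,C_2,D)>\Gamma(S(x,r_1),S(x,r_2),A)$ you need $C_1\subset\overline{B(x,r_1)}$ \emph{and} $C_2\cap B(x,r_2)=\varnothing$; if you force the second condition you lose any control of ${\rm dist}(f(C_1),f(C_2))$ and the Loewner input degenerates, while if you drop it the minorization fails. The paper resolves the conflict by centring the ring at a new point $z^{(1)}=y+(x-y)\kappa_1$, chosen so that $P_1=f^{-1}(E)\subset\overline{B(z^{(1)},\varepsilon^{(1)})}$ and $Q_1=f^{-1}(F)\cap B(z^{(1)},\varepsilon^{(2)})=\varnothing$ follow by construction, and so that $\varepsilon^{(2)}-\varepsilon^{(1)}=|x-y|$ exactly, which is what makes the test function $\eta\equiv1/|x-y|$ admissible and produces the $\Vert Q\Vert_1/|x-y|^n$ factor in part~2). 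Finally, since $z^{(1)}$ depends on $f$, part~1) also needs the compactness/limit argument the paper runs (passing to $z^{(1,m)}\rightarrow z_*$, freezing a nearby centre $x^*$, and invoking Proposition~\ref{pr3} with $I=\int r^{-1}q_{x^*}^{-1/(n-1)}\,dr>0$, which is finite under the ``positive-measure set $E_1$'' hypothesis); your sketch omits this, and the naive choice $\eta\equiv1/(r_2-r_1)$ need not yield a finite bound under the mere sphere-integrability assumption of part~1).
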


\medskip
\begin{corollary}\label{cor1}
{\it\, The statement of Theorem~\ref{th1} remains true if, instead
of the condition regarding the integrability of the function $Q$
over spheres with respect to some set $E_1$ is replaced by a simpler
condition: $Q\in L^1(D).$}
\end{corollary}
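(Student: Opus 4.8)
The plan is to show that the hypothesis "$Q\in L^1(D)$" implies the more technical hypothesis of Theorem~\ref{th1}, namely that for every $x_0\in D$ and every $0<r_1<r_2<r_0$ there is a set $E_1\subset[r_1,r_2]$ of positive linear measure over whose spheres $Q$ is integrable with respect to $\mathcal H^{n-1}$. Once this implication is established, Corollary~\ref{cor1} follows immediately by applying Theorem~\ref{th1} verbatim.

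The key step is a Fubini-type argument in spherical (polar) coordinates about the point $x_0$. Writing a point $x\in{\Bbb R}^n$ as $x=x_0+r\omega$ with $r>0$ and $\omega\in{\Bbb S}^{n-1}$, we have $dm(x)=r^{n-1}\,dr\,d\mathcal H^{n-1}(\omega)$, so that
\begin{equation*}
\int\limits_{A(x_0,r_1,r_2)}Q(x)\,dm(x)=\int\limits_{r_1}^{r_2}\left(\int\limits_{S(x_0,r)}Q(x)\,d\mathcal H^{n-1}(x)\right)dr\,.
\end{equation*}
Since $Q\in L^1(D)$ and $A(x_0,r_1,r_2)\cap D\subset D$, the left-hand side is finite; hence the nonnegative measurable function
\begin{equation*}
\psi(r):=\int\limits_{S(x_0,r)}Q(x)\,d\mathcal H^{n-1}(x)
\end{equation*}
is integrable over $(r_1,r_2)$, and in particular $\psi(r)<\infty$ for a.e.\ $r\in(r_1,r_2)$. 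Thus the set $E_1:=\{r\in[r_1,r_2]:\psi(r)<\infty\}$ has full (hence positive) linear Lebesgue measure in $[r_1,r_2]$, and for each $r\in E_1$ the function $Q$ is $\mathcal H^{n-1}$-integrable over $S(x_0,r)$. This is precisely the hypothesis required in Theorem~\ref{th1}.

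The only mild technical point—which I would record carefully rather than as the main obstacle—is the justification of the coordinate change and the Fubini/Tonelli interchange for the nonnegative measurable integrand $Q$; this is standard (e.g.\ it follows from Tonelli's theorem applied to the product measure $r^{n-1}\,dr\otimes d\mathcal H^{n-1}$ on $(0,\infty)\times{\Bbb S}^{n-1}$), and one should note that "$\mathcal H^{n-1}$-integrable over $S(x_0,r)$'' is understood modulo the usual identification of $(n-1)$-dimensional Hausdorff measure on the sphere with its rotation-invariant surface measure. Having verified the implication, one invokes part~1) of Theorem~\ref{th1} to conclude uniform lightness and equi-uniformity on $K$ (using that $Q\in L^1(D)$ forces $D$ to have the required integrability property, and, when $D$ is bounded, the full conclusion of part~1)), and part~2) to conclude the estimate~(\ref{eq1D}) under the additional assumption $f(x)\ne\infty$; note that the hypothesis "$Q\in L^1(D)$'' in part~2) of Theorem~\ref{th1} is already identical to the hypothesis of the corollary, so no further work is needed there. $\Box$
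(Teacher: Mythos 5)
Your proof is correct and proceeds exactly as the paper's own: a Fubini/Tonelli argument in polar coordinates around $x_0$ converts $Q\in L^1(D)$ into $\mathcal H^{n-1}$-integrability of $Q$ over $S(x_0,r)$ for a.e.\ $r\in[r_1,r_2]$, which supplies the set $E_1$ of positive linear measure required by Theorem~\ref{th1}. The paper is terser (it simply cites Fubini and states the iterated-integral identity), while you add the standard identification of $\mathcal H^{n-1}|_{S(x_0,r)}$ with surface measure, but the route is the same.
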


\medskip
\begin{theorem}\label{th2}
{\it\, Let $D$ be a bounded domain. Assume that, for each point
$x_0\in D$ and for every $0<r_1<r_2<r_0:=\sup\limits_{x\in
D}|x-x_0|$ there is a set $E_1\subset[r_1, r_2]$ of a positive
linear Lebesgue measure such that the function $Q$ is integrable
with respect to $\mathcal{H}^{n-1}$ over the spheres $S(x_0, r)$ for
every $r\in E_1.$ Then the family $\frak{F}^{\delta}_{K, Q}(D)$ is
uniformly open on $K.$} \end{theorem}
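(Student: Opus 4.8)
The plan is to deduce uniform openness from the uniform lightness (in fact, equi-uniformity) already established in Theorem~\ref{th1}, part 1), applied to suitably chosen compacta. Fix $\varepsilon_0>0$ and suppose, for contradiction, that the conclusion fails. Then there are mappings $f_m\in\frak{F}^{\delta}_{K,Q}(D)$, points $x_m$ with $B(x_m,\varepsilon_0)\subset K$, and points $y_m\in B_h(f_m(x_m),1/m)\setminus f_m(B(x_m,\varepsilon_0))$. Passing to subsequences (using compactness of $K$ and of $\overline{{\Bbb R}^n}$), we may assume $x_m\to x_0$, $f_m(x_m)\to z_0$, and $y_m\to z_0$ as well. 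The idea is that the point $y_m$, being outside the image of a ball of fixed radius but chordally close to $f_m(x_m)$, must be ``screened off'' from $f_m(x_m)$ by the image $f_m(S(x_m,r))$ of some sphere of controlled radius, and this forces the chordal diameter of that spherical image to be small — contradicting equi-uniformity (hence uniform lightness) applied to a continuum joining two antipodal points of that sphere.

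More precisely, I would argue as follows. For each $m$, since $f_m$ is a homeomorphism of $D$ and $y_m\notin f_m(B(x_m,\varepsilon_0))$ while $f_m(x_m)$ is in that open set, the point $y_m$ lies in the unbounded complementary component of $f_m(S(x_m,\varepsilon_0/2))$ relative to $f_m(x_m)$; more usefully, for a well-chosen radius $r\in(0,\varepsilon_0)$ the set $f_m(S(x_m,r))$ separates $f_m(x_m)$ from $y_m$ in $f_m(D)$. Because $h(f_m(x_m),y_m)\to 0$, this separating spherical image must have small chordal diameter: indeed $f_m(B(x_m,r))$ is an open connected set containing $f_m(x_m)$ but not $y_m$, with connected boundary $f_m(S(x_m,r))$ inside $f_m(D)$, so $f_m(S(x_m,r))$ must come within distance $h(f_m(x_m),y_m)$ of $f_m(x_m)$ and also stay ``on one side,'' and a standard connectedness/separation argument in $\overline{{\Bbb R}^n}$ then gives $h(f_m(S(x_m,r)))\to 0$. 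On the other hand, $S(x_m,r)$ contains a pair of antipodal points $a_m,b_m$ with $|a_m-b_m|=2r$, hence $h(a_m,b_m)$ is bounded below by a positive constant depending only on $r$ and $\mathrm{diam}\,D$; joining them by a continuum $C_m\subset S(x_m,r)\subset K$ (a half great circle, say) with $h(C_m)\geqslant\varepsilon:=h(a_m,b_m)>0$, Theorem~\ref{th1} part 1) yields $h(f_m(C_m))\geqslant\delta_1(\varepsilon,K)>0$ uniformly in $m$. Since $f_m(C_m)\subset f_m(S(x_m,r))$, this contradicts $h(f_m(S(x_m,r)))\to 0$, completing the proof.

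The main obstacle is the geometric/topological step: showing that the hypothesis ``$y_m\notin f_m(B(x_m,\varepsilon_0))$ and $h(f_m(x_m),y_m)<1/m$'' forces the existence of a radius $r$, with $r$ \emph{bounded away from $0$} (uniformly in $m$, or at least along a subsequence bounded below), such that $h(f_m(S(x_m,r)))\to 0$. One must rule out the alternative in which the ``pinching'' only happens for $r\to 0$. I expect this to be handled by choosing $r$ in a fixed range $[\varepsilon_0/3,\varepsilon_0/2]$ and using that $f_m$ is a homeomorphism together with the observation that $y_m$ together with a segment from $f_m(x_m)$ to $y_m$ (of chordal length $<1/m$) cannot cross $f_m(S(x_m,r))$ for \emph{every} such $r$ unless the corresponding spherical images all shrink; alternatively, one invokes the equicontinuity of $\{f_m\}$ on $K$ — which itself follows from Theorem~\ref{th1} via a normal-family argument — so that $f_m\to f$ locally uniformly (after passing to a subsequence), and then Theorem~\ref{th4} shows the limit $f$ is a homeomorphism, whose openness at $x_0$ transfers back to the $f_m$ for large $m$. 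I would develop the argument along the first, self-contained line to keep the proof independent of Theorem~\ref{th4}, but either route closes the gap.
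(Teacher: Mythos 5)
Your plan correctly identifies the contradiction setup and correctly aims to reduce uniform openness to the uniform lightness of Theorem~\ref{th1}, but the central topological step is unjustified and, as stated, false. You assert that because $f_m(S(x_m,r))$ separates $f_m(x_m)$ from $y_m$ and $h(f_m(x_m),y_m)\to0$, ``a standard connectedness/separation argument'' forces $h(f_m(S(x_m,r)))\to0$. That does not follow: the sphere image must pass close to $f_m(x_m)$ somewhere (to form a thin wall between $f_m(x_m)$ and $y_m$), but nothing prevents the rest of $f_m(S(x_m,r))$ from being large. A homeomorphic image of a ball can wind so that two nearby points are separated by a thin sliver while its boundary still has chordal diameter bounded away from zero. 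You yourself flag this as ``the main obstacle,'' and neither of your suggested fixes closes it: restricting $r$ to $[\varepsilon_0/3,\varepsilon_0/2]$ does not change the picture, and the normal-family route is not available from Theorem~\ref{th1} alone (equi-uniformity there is a \emph{lower} bound $h(f(x),f(y))\geqslant\delta_1$, not an equicontinuity bound), while invoking Theorem~\ref{th4} would be circular in the paper's architecture since Theorem~\ref{th4} is proved via Theorem~\ref{th2A}, which rests on Theorem~\ref{th2}.

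The paper sidesteps the spheres entirely. With the same $x_m$, $y_m$, and (after an inversion if needed) $\omega_*\ne\infty$, join $f_m(x_m)$ to $y_m$ by the Euclidean segment $r_m$. Since $|r_m|$ meets both $f_m(B(x_m,\varepsilon_0))$ and its complement, Proposition~\ref{pr2} gives a first exit point $z_m=r_m(t_m)\in\partial f_m(B(x_m,\varepsilon_0))$; let $\beta_m:=r_m|_{[0,t_m)}$ be the sub-segment inside $f_m(B(x_m,\varepsilon_0))$. Pulling back, $\alpha_m:=f_m^{-1}(\beta_m)$ is a path starting at $x_m$ that, because homeomorphisms preserve boundaries, approaches $S(x_m,\varepsilon_0)$; hence it contains a point at distance $>\varepsilon_0/2$ from $x_m$ and therefore a subcontinuum $E_m\subset|\alpha_m|\subset B(x_m,\varepsilon_0)\subset K$ with $d(E_m)\geqslant\varepsilon_0/2$ and, $D$ being bounded, $h(E_m)\geqslant\varepsilon^{\ast}>0$ uniformly. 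But $f_m(E_m)\subset|\beta_m|$ and $d(\beta_m)=|f_m(x_m)-y_m|<1/m$, so $h(f_m(E_m))\to0$, contradicting uniform lightness from Theorem~\ref{th1}. The decisive idea you are missing is to pull back the \emph{segment} (whose image-side chordal size is known to shrink) rather than trying to estimate the image of a \emph{sphere} (whose chordal size you cannot control).
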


\medskip
\begin{corollary}\label{cor2}
{\it\, The statement of Theorem~\ref{th2} remains true if, instead
of the condition regarding the integrability of the function $Q$
over spheres with respect to some set $E_1$ is replaced by a simpler
condition: $Q\in L^1(D).$}
\end{corollary}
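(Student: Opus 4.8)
The plan is to deduce Corollary~\ref{cor2} directly from Theorem~\ref{th2}. It suffices to check that the condition $Q\in L^1(D)$ implies the (formally weaker) hypothesis of Theorem~\ref{th2}: namely, that for every $x_0\in D$ and all $0<r_1<r_2<r_0:=\sup\limits_{x\in D}|x-x_0|$ there is a set $E_1\subset[r_1,r_2]$ of positive linear Lebesgue measure such that $Q$ is integrable with respect to $\mathcal{H}^{n-1}$ over the sphere $S(x_0,r)$ for every $r\in E_1$. Once this is established, Theorem~\ref{th2} applies verbatim and yields that $\frak{F}^{\delta}_{K,Q}(D)$ is uniformly open on $K$.

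To verify the implication, I would fix $x_0\in D$, extend $Q$ by zero outside $D$ (consistent with the convention used in defining ring $Q$-mappings), and pass to spherical coordinates centered at $x_0$. Since $D$ is bounded, $r_0<\infty$ and $D\subset\overline{B(x_0,r_0)}$, so by the polar decomposition of Lebesgue measure (Fubini/Tonelli),
$$
\int\limits_{0}^{r_0}\Bigl(\,\int\limits_{S(x_0,r)}Q\,d\mathcal{H}^{n-1}\Bigr)\,dr=\int\limits_{B(x_0,r_0)}Q(x)\,dm(x)\leqslant\|Q\|_{L^1(D)}<\infty\,.
$$
In particular the function $r\mapsto\int_{S(x_0,r)}Q\,d\mathcal{H}^{n-1}$ is measurable on $(0,r_0)$ and finite for almost every $r\in(0,r_0)$; let $N(x_0)\subset(0,r_0)$ denote the linear null set on which it equals $+\infty$.

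Then, for arbitrary $0<r_1<r_2<r_0$, I would take $E_1:=(r_1,r_2)\setminus N(x_0)$. This set is contained in $[r_1,r_2]$, has full---hence positive---linear Lebesgue measure, and $Q$ is $\mathcal{H}^{n-1}$-integrable over $S(x_0,r)$ for every $r\in E_1$. Since $x_0\in D$ and the radii $r_1<r_2<r_0$ were arbitrary, the hypothesis of Theorem~\ref{th2} holds, and Corollary~\ref{cor2} follows at once. The identical reduction turns the hypothesis of Theorem~\ref{th1} into $Q\in L^1(D)$, giving Corollary~\ref{cor1}.

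I do not anticipate a genuine obstacle here: the whole argument reduces to one application of Fubini's theorem in spherical coordinates. The only minor point to keep in mind is that the exceptional set $N(x_0)$ depends on the center $x_0$; this is immaterial, because Theorem~\ref{th2} only requires---separately for each point $x_0$ and each admissible pair $r_1<r_2$---the existence of a single set $E_1$ of positive measure, and no choice uniform in $x_0$ is needed.
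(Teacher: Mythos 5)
Your proof is correct and takes essentially the same route as the paper: the paper's proof of Corollary~\ref{cor2} simply refers to the proof of Corollary~\ref{cor1}, which extends $Q$ by zero, applies Fubini's theorem in spherical coordinates to conclude that $r\mapsto\int_{S(x_0,r)}Q\,d\mathcal{H}^{n-1}$ is finite for a.e.\ $r$, and then takes $E_1$ to be the complement of the exceptional null set. Your write-up is a slightly more explicit version of exactly this argument.
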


\medskip
Condition~(\ref{eq1D}) was obtained in~\cite{IRS} under some
additional condition, namely, we assumed that all mapped domains
$f(D)$ must be quasiextremal distance domains by Gehring-Martio with
some general ``quasiextremal'' constant $A\geqslant 1,$
cf.~\cite{GM}. Observe that, the condition $h(f(K),
\partial f(D))\geqslant \delta$ in the definition of the class
$\frak{F}^{\delta}_{K, Q}(D)$ is somewhat difficult to verify, so
below we will consider a slightly different condition.

\medskip
Given a domain $D$ in ${\Bbb R}^n,$ $n\geqslant 2,$ a non-degenerate
continuum $A\subset D,$ a Lebesgue measurable function
$Q:D\rightarrow [0, \infty]$ and numbers $\delta, r>0$ we denote by
$\frak{A}^{\delta, r}_{A, Q}(D)$ a family of all homeomorphisms
$f:D\rightarrow \overline{{\Bbb R}^n}$
satisfying~(\ref{eq2*!A})--(\ref{eq8BC}) for all
$0<r_1<r_2<d_0:=\sup\limits_{x\in D}|x-x_0|$ such that
$h(f(A))\geqslant \delta$ and satisfying the condition:
$h(E)\geqslant r$ whenever $E$ is any component of $\partial f(D).$
The following result holds.

\medskip
\begin{theorem}\label{th1A}
{\it\,Assume that, for each point $x_0\in D$ and for every
$0<r_1<r_2<r_0:=\sup\limits_{x\in D}|x-x_0|$ there is a set
$E_1\subset[r_1, r_2]$ of a positive linear Lebesgue measure such
that the function $Q$ is integrable with respect to
$\mathcal{H}^{n-1}$ over the spheres $S(x_0, r)$ for every $r\in
E_1.$

\medskip
1) If $D$ is bounded, then the family $\frak{A}^{\delta, r}_{A,
Q}(D)$ is equi-uniform (uniformly light) and uniformly open on any
compactum $K\subset D;$

\medskip
2) if $Q\in L^1(D)$ and $f(x)\ne\infty$ for every $x\in D,$ then for
any $K\subset D$ there exist constants $C, C_1>0$ such that
\begin{equation}\label{eq1DA}
|f(x)-f(y)|\geqslant C_1\cdot\exp\left\{-\frac{\Vert
Q\Vert_1}{C|x-y|^n}\right\}
\end{equation}
for all $x, y\in K$ and every $f\in \frak{A}^{\delta, r}_{A, Q}(D).$
}
\end{theorem}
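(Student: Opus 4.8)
The plan is to deduce Theorem~\ref{th1A} from Theorem~\ref{th1} by constructing, for a given compactum $K\subset D$, an auxiliary compactum on which the separation hypothesis $h(f(K'),\partial f(D))\geqslant\delta'$ needed for the class $\frak{F}^{\delta'}_{K',Q}(D)$ can be verified using only the two hypotheses defining $\frak{A}^{\delta,r}_{A,Q}(D)$, namely $h(f(A))\geqslant\delta$ and $h(E)\geqslant r$ for each component $E$ of $\partial f(D)$. First I would enlarge $A$ and $K$ to a single connected compactum: choose a continuum $K'\subset D$ with $A\cup K\subset K'$ (possible since $D$ is a domain, hence connected — join the finitely many "pieces" by arcs inside $D$ and take a closed neighborhood). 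The point of passing to such a $K'$ is that $f(K')$ is then a continuum in $\overline{{\Bbb R}^n}$ containing $f(A)$, so $h(f(K'))\geqslant h(f(A))\geqslant\delta$, and this connectedness is what lets us convert a lower bound on the diameter of $f(A)$ into a lower bound on the distance from $f(K')$ to $\partial f(D)$.

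The key step is the separation estimate. Fix $f\in\frak{A}^{\delta,r}_{A,Q}(D)$ and let $E$ be the component of $\partial f(D)$ that is closest to $f(K')$ (or, if it is more convenient, argue componentwise). Both $f(K')$ and $E$ are continua (the former because $K'$ is a continuum and $f$ continuous; the latter by hypothesis a component of $\partial f(D)$ with $h(E)\geqslant r$), they are disjoint, and $f(D)$ is a domain whose boundary contains $E$ while $f(K')\subset f(D)$. I would now invoke a standard lower modulus bound for the family of curves joining two continua of chordal diameter at least $\min(\delta,r)$ lying at chordal distance $t:=h(f(K'),\partial f(D))$ apart in $\overline{{\Bbb R}^n}$ — a Loewner-type / Väisälä-type estimate of the form $M(\Gamma(f(K'),E))\geqslant\psi(t)$ with $\psi(t)\to\infty$ as $t\to 0$, depending only on $n,\delta,r$. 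On the other hand, by the modulus inequality~(\ref{eq2*!A}) applied at a suitable point $x_0\in D$ (with $r_1,r_2$ chosen so that the annulus $A(x_0,r_1,r_2)$ separates $K'$ from $\partial D$ in $D$), together with the hypothesis that $Q$ is integrable over a positive-measure set of spheres (which, as in the proof of Theorem~\ref{th1}, yields a finite bound $M(f(\Gamma(S_1,S_2,D)))\leqslant$ const using the test function supported on the good spheres), we get $M(\Gamma(f(K'),E))\leqslant C(K')<\infty$ uniformly in $f$. Combining the two inequalities forces $\psi(t)\leqslant C(K')$, hence $t\geqslant\delta':=\psi^{-1}(C(K'))>0$, a bound independent of $f$. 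Thus every $f\in\frak{A}^{\delta,r}_{A,Q}(D)$ belongs to $\frak{F}^{\delta'}_{K',Q}(D)$.

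Once this inclusion $\frak{A}^{\delta,r}_{A,Q}(D)\subset\frak{F}^{\delta'}_{K',Q}(D)$ is established, part~1) follows immediately: by Theorem~\ref{th1}(1) the larger family is equi-uniform (hence uniformly light) on $K'\supset K$, and restriction to $K$ preserves these properties; uniform openness on $K$ follows the same way from Theorem~\ref{th2} (via Corollary~\ref{cor2} if $Q\in L^1(D)$ is used, or directly under the spherical-integrability hypothesis), noting that every ball $B(x_0,\varepsilon_0)$ with center in $K$ is contained in a ball with center in $K'$ once $\varepsilon_0$ is adjusted, or simply that $K\subset K'$. For part~2), under $Q\in L^1(D)$ and $f(D)\subset{\Bbb R}^n$, I would apply Theorem~\ref{th1}(2) to the family $\frak{F}^{\delta'}_{K',Q}(D)$ on the compactum $K'$, obtaining~(\ref{eq1D}) with its constant $2\Vert Q\Vert_1$ in the exponent; the sharper constant $\Vert Q\Vert_1$ in~(\ref{eq1DA}) should come from the fact that in the $\frak{A}$-setting one may center the ring estimate at a point of $A$ itself and use a single annulus rather than the doubled configuration implicit in the proof of Theorem~\ref{th1} (the factor $2$ there arising from separately handling $x$ and $y$), so I would redo that particular estimate directly for $f\in\frak{A}^{\delta,r}_{A,Q}(D)$ rather than quoting it verbatim.

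\textbf{Expected main obstacle.} The hard part will be making the uniform lower modulus bound $M(\Gamma(f(K'),E))\geqslant\psi(t)$ genuinely independent of $f$ while only knowing $h(E)\geqslant r$ (and not, say, that $E$ is large in diameter relative to its position, or that $f(D)$ is a quasiextremal-distance domain as was assumed in~\cite{IRS}) — i.e. extracting the separation estimate purely from the two chordal-diameter hypotheses. This is precisely the place where the "broader and deeper" technique alluded to in the introduction is needed: one must show that two disjoint continua of chordal size at least $\min(\delta,r)$ that get chordally close are joined by a curve family of large modulus, with the modulus blowing up as they approach, uniformly over all admissible configurations. The symmetry of the chordal metric under Möbius transformations of $\overline{{\Bbb R}^n}$ and the Loewner property of $\overline{{\Bbb R}^n}$ (equivalently ${\Bbb S}^n$) are the tools I would use, reducing to a normalized position; once the blow-up rate $\psi$ is pinned down, everything else is bookkeeping already carried out in Theorems~\ref{th1} and~\ref{th2}.
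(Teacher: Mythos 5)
Your overall strategy coincides with the paper's: the paper also establishes a uniform separation bound $h(f(C),\partial f(D))\geqslant\delta^{*}_1>0$ for any continuum $C\subset D$ and every $f\in\frak{A}^{\delta,r}_{A,Q}(D)$ (this is Lemma~\ref{lem2} combined with Proposition~\ref{pr4}, which handles the enlargement from $A$ to an arbitrary continuum by joining them with a path exactly as you propose for $K'$), infers the inclusion $\frak{A}^{\delta,r}_{A,Q}(D)\subset\frak{F}^{\delta^{*}_1}_{C,Q}(D)$, and then quotes Theorems~\ref{th1} and~\ref{th2}. You correctly identify the separation estimate as the crux; the paper realizes the lower modulus bound through V\"ais\"al\"a's lemma on weak flatness of inner points (Lemma~\ref{lem3}) rather than directly through the Loewner function, but that is a cosmetic choice.

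There is one genuine gap in the way you wrote the upper modulus bound. You claim to ``choose $x_0\in D$ and $r_1,r_2$ so that the annulus $A(x_0,r_1,r_2)$ separates $K'$ from $\partial D$,'' but for a general bounded domain $D$ no single annulus contained in $D$ can separate an arbitrary compactum $K'\subset D$ from $\partial D$ (think of an annular domain $D$ with $K'$ a circle winding around the hole). The paper's proof of Lemma~\ref{lem2} handles exactly this: it covers the continuum by finitely many balls $B(x_i,\varepsilon/4)\subset D$, truncates each path of the offending family at the sphere $S(x_i,\varepsilon/4)$ the first time it leaves that ball and again at $S(x_i,\varepsilon/2)$, so that the family is minorized by the union $\bigcup_i\Gamma(S(x_i,\varepsilon/4),S(x_i,\varepsilon/2),A(x_i,\varepsilon/4,\varepsilon/2))$, and then bounds the modulus by the sum of finitely many ring-modulus estimates from Proposition~\ref{pr3}. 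Your single-annulus version would need to be replaced by this finite-cover argument. Also, your speculation in part~2) about obtaining the constant $\Vert Q\Vert_1$ (rather than $2\Vert Q\Vert_1$) by re-centering the annulus at a point of $A$ is not what the paper does: the paper's proof of Theorem~\ref{th1A} simply cites Theorem~\ref{th1} with no new estimate, so the factor-of-two difference between~(\ref{eq1D}) and~(\ref{eq1DA}) is unexplained there and should not be read as a sharpened result requiring a separate argument.
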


\medskip
\begin{corollary}\label{cor3}
{\it\, The statement of Theorem~\ref{th1} remains true if, instead
of the condition regarding the integrability of the function $Q$
over spheres with respect to some set $E_1$ is replaced by a simpler
condition: $Q\in L^1(D).$}
\end{corollary}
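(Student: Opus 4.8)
The plan is to reduce Corollary~\ref{cor3} to Theorem~\ref{th1A}: I would show that the hypothesis $Q\in L^1(D)$ automatically implies the (formally weaker) spherical-integrability assumption used in Theorem~\ref{th1A}, namely that for each $x_0\in D$ and each $0<r_1<r_2<r_0:=\sup_{x\in D}|x-x_0|$ there is a set $E_1\subset[r_1,r_2]$ of positive linear Lebesgue measure over whose spheres $S(x_0,r)$ the function $Q$ is $\mathcal{H}^{n-1}$-integrable. Once this implication is in place, conclusions 1) and 2) follow word for word from Theorem~\ref{th1A} (for 2) the assumption $Q\in L^1(D)$ is in any case already present there).

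First I would fix $x_0\in D$ and $0<r_1<r_2<r_0$ and write the Lebesgue integral of $Q$ over the spherical ring centered at $x_0$ in polar coordinates; by the coarea formula (equivalently, Tonelli's theorem in spherical coordinates),
\begin{equation*}
\int\limits_{A(x_0,r_1,r_2)\cap D}Q(x)\,dm(x)=\int\limits_{r_1}^{r_2}\left(\,\int\limits_{S(x_0,r)\cap D}Q(x)\,d\mathcal{H}^{n-1}(x)\right)dr\,.
\end{equation*}
Since $A(x_0,r_1,r_2)\cap D\subset D$ and $Q\in L^1(D)$, the left-hand side is finite. Hence the inner integral $g(r):=\int_{S(x_0,r)\cap D}Q\,d\mathcal{H}^{n-1}$, which is a nonnegative measurable function of $r$ by Tonelli's theorem, is finite for almost every $r\in(r_1,r_2)$. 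Therefore the set $E_1:=\{r\in[r_1,r_2]\colon g(r)<\infty\}$ is measurable and has full (in particular positive) linear Lebesgue measure in $[r_1,r_2]$, which is precisely the assumption required in Theorem~\ref{th1A}.

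Having verified this, I would simply invoke Theorem~\ref{th1A}: for bounded $D$ the family $\frak{A}^{\delta,r}_{A,Q}(D)$ is then equi-uniform (hence uniformly light) and uniformly open on every compactum $K\subset D$, and when additionally $f(x)\ne\infty$ for all $x\in D$ and $Q\in L^1(D)$, the lower estimate~(\ref{eq1DA}) holds. There is no genuine obstacle here beyond the routine bookkeeping of the coarea formula; the only point deserving a line of care is the measurability of $r\mapsto g(r)$ and the fact that a.e.\ finiteness of an integrable nonnegative function yields a positive-measure (indeed full-measure) set $E_1$, both of which are immediate.
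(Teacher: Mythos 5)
Your proposal is correct and matches the paper's argument: the paper also proves Corollary~\ref{cor3} by reducing to Theorem~\ref{th1A}, noting (as in the proof of Corollary~\ref{cor1}) that by Fubini/Tonelli the finiteness of $\int_{r_1<|x-x_0|<r_2}Q\,dm=\int_{r_1}^{r_2}\int_{S(x_0,r)}Q\,d\mathcal{H}^{n-1}\,dr$ forces $\mathcal{H}^{n-1}$-integrability of $Q$ over $S(x_0,r)$ for a.e.\ $r\in[r_1,r_2]$, hence for a positive-measure set $E_1$. You also correctly read the corollary as referring to Theorem~\ref{th1A} (the ``Theorem~\ref{th1}'' in the statement is evidently a typo), and your remarks on measurability of the inner integral are a harmless refinement of what the paper leaves implicit.
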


\medskip
As one of the most important consequences of Theorem~\ref{th1A}, we
have the following statement.

\medskip
\begin{theorem}\label{th6}
{\it Let $D$ be a bounded domain in ${\Bbb R}^n,$ $n\geqslant 2,$
let $Q:D\rightarrow[0, \infty]$ be a Lebesgue measurable function
and let $f_m:D\rightarrow \overline{{\Bbb R}^n},$ $m=1,2,\ldots, $
be a sequence of homeomorphisms satisfying the relations
(\ref{eq2*!A}) and (\ref{eq8BC}) for all $x_0\in D$ and all
$0<r_1<r_2<r_0:={\rm dist\,}(x_0, \partial D).$ Assume that, for
each point $x_0\in D$ and for every $0<r_1<r_2<r_0:=r_0:={\rm
dist\,}(x_0, \partial D)$ there is a set $E_1\subset[r_1, r_2]$ of a
positive linear Lebesgue measure such that the function $Q$ is
integrable with respect to $\mathcal{H}^{n-1}$ over the spheres
$S(x_0, r)$ for every $r\in E_1.$

If $f_m$ converges to $f:D\rightarrow\overline{{\Bbb R}^n}$ locally
uniformly, then $f$ either a homeomorphism $f:D\rightarrow
\overline{{\Bbb R}^n},$ or a constant $c\in \overline{{\Bbb R}^n}.$

If, in addition, $f_m(x)\ne \infty$ for all $x\in D,$ then $f$
either a homeomorphism $f:D\rightarrow {\Bbb R}^n,$ or a constant
$c\in \overline{{\Bbb R}^n}.$ }
\end{theorem}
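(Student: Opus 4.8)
The plan is to derive Theorem~\ref{th6} from Theorem~\ref{th1A} (equivalently, from Corollary~\ref{cor3}), running the classical scheme by which Theorems~A and~B and \cite[Theorem~4.2]{RS} are proved, but using the equi-uniformity and uniform openness supplied by Theorem~\ref{th1A} as the new analytic input. Since $f$ is a locally uniform limit of the continuous maps $f_m,$ it is continuous; if it is a constant there is nothing to prove, so I will assume throughout that $f$ is not constant and show that $f$ is then a homeomorphism.

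The first task is to place the $f_m$ (at least for large $m,$ or along a subsequence — which is harmless, $f$ being still the limit) inside one of the families of Theorem~\ref{th1A}. Since $D$ is connected and $f$ is non-constant, I would pick $a,b\in D$ with $f(a)\ne f(b)$ and join them by a polygonal arc $A\subset D;$ then $A$ is a non-degenerate continuum with $h(f(A))\geqslant h(f(a),f(b))=:2\delta>0,$ so uniform convergence on the compactum $A$ yields $m_0$ with $h(f_m(A))\geqslant\delta$ for $m\geqslant m_0.$ I also need a single $r>0$ and an index $m_1\geqslant m_0$ with $h(E)\geqslant r$ for every component $E$ of $\partial f_m(D)$ and every $m\geqslant m_1.$ Granting this, $\{f_m:m\geqslant m_1\}\subset\frak{A}^{\delta,r}_{A,Q}(D),$ and Theorem~\ref{th1A}(1) — applicable because $D$ is bounded and, by hypothesis, $Q$ is $\mathcal{H}^{n-1}$-integrable over $S(x_0,r)$ for $r$ in a set of positive linear measure — shows that $\{f_m:m\geqslant m_1\}$ is equi-uniform and uniformly open on every compactum $K\subset D.$

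Granting the above, the conclusion follows by standard arguments. For injectivity: if $f(x_1)=f(x_2)=:y_0$ with $x_1\ne x_2,$ choose $\varepsilon_0>0$ so that $B(x_1,\varepsilon_0)$ and $B(x_2,\varepsilon_0)$ are disjoint and lie in $D,$ and apply uniform openness on a compactum containing both balls: there is $r_0>0$ with $B_h(f_m(x_i),r_0)\subset f_m(B(x_i,\varepsilon_0))$ for $i=1,2$ and all large $m.$ Since $f_m(x_i)\to y_0,$ for all large $m$ we obtain $y_0\in f_m(B(x_1,\varepsilon_0))\cap f_m(B(x_2,\varepsilon_0)),$ which is impossible because $f_m$ is injective and the two balls are disjoint. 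Hence $f$ is injective, and — being a continuous injection of the domain $D\subset{\Bbb R}^n$ into the $n$-manifold $\overline{{\Bbb R}^n}$ — it is open by the invariance of domain, hence a homeomorphism of $D$ onto $f(D).$ For the additional assertion, if $f_m(x)\ne\infty$ for all $x$ but $\infty\in f(D),$ say $f(x_0)=\infty,$ then uniform openness at $x_0$ gives $r_0>0$ with $B_h(f_m(x_0),r_0)\subset f_m(B(x_0,\varepsilon_0))\subset f_m(D)$ for all large $m;$ since $f_m(x_0)\to\infty,$ we have $\infty\in B_h(f_m(x_0),r_0)$ for large $m,$ contradicting $f_m(x)\ne\infty.$ So $f$ maps $D$ into ${\Bbb R}^n.$

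The hard part is exactly the uniform lower bound $h(E)\geqslant r$ on the components of $\partial f_m(D)$: unlike the non-degeneracy of $f(A),$ this is not a soft consequence of convergence, since a priori the image domains $f_m(D)$ could develop ever smaller complementary components. My plan here is to argue by contradiction along a subsequence: if $\partial f_m(D)$ had a component $E_m$ with $h(E_m)\to 0,$ then (passing to a further subsequence) $E_m\to\{c\},$ and, since $h(f_m(A))\geqslant\delta,$ the domains $f_m(D)$ must lie in the unbounded complementary component of $E_m;$ feeding the ring inequalities~(\ref{eq2*!A})--(\ref{eq8BC}) at points of $D$ carried near $c$ — where, after choosing $\eta$ supported on the exceptional set $E_1$ on which $Q$ is integrable over spheres, the right-hand sides stay bounded uniformly in $m$ — I expect to force $f_m$ to collapse some non-degenerate continuum onto $c,$ in conflict with the equi-uniformity established for the non-collapsing $f_m$ and with the non-constancy of $f.$ (An alternative is to run the whole argument with the family $\frak{F}^{\delta}_{K,Q}(D)$ of Theorem~\ref{th1}, deriving the condition $h(f_m(K),\partial f_m(D))\geqslant\delta$ from the lower distance estimate~(\ref{eq1D}); the trade-off is that this condition is harder to check directly.) Once this point is settled, the remaining steps are as described above.
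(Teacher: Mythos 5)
Your overall strategy is sound, and you have correctly located the crux: you need to place the $f_m$ (for large $m$, locally near a point where discreteness or finiteness could fail) into one of the families for which the paper's equi-uniformity/uniform-openness theorems apply. The first part of your argument -- non-degeneracy of $f_m(A)$ from non-constancy of $f$ plus locally uniform convergence, injectivity of $f$ from uniform openness plus injectivity of $f_m$ plus invariance of domain, and exclusion of $\infty$ from $f(D)$ via uniform openness -- is all fine once that membership is secured.

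The gap is precisely the step you flag as ``hard.'' The requirement $h(E)\geqslant r$ for \emph{every} component $E$ of $\partial f_m(D)$ is not true in general and cannot be forced by the hypotheses: the paper's own Example~2 has $D=B(0,2)\setminus\{0\}$ with $\{0\}$ a degenerate component of $\partial f_m(D)$ for all $m.$ Your contradiction sketch (isolating a shrinking component $E_m\to\{c\}$ and collapsing a continuum onto $c$) does not close, because nothing in the setup prevents such small boundary components from existing even while $f_m\to f$ locally uniformly with $f$ a homeomorphism. Your alternative (verify $h(f_m(K),\partial f_m(D))\geqslant\delta$ from \eqref{eq1D}) is circular, since \eqref{eq1D} is a \emph{conclusion} obtained under that hypothesis. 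The paper avoids all of this by not using $\frak{A}^{\delta,r}_{A,Q}$ at all: it restricts attention to a small ball $B(x_0,\varepsilon_0)$ around the critical point, observes that by local uniform convergence and continuity of $f$ one has $f_m(B(x_0,\varepsilon_0))\subset K:=\overline{B_h(f(x_0),1/2)}\ne\overline{{\Bbb R}^n}$ for large $m$ (after shrinking $\varepsilon_0$), and then invokes the class $\frak{B}^{\delta,K}_{A,Q}$ and its associated Lemma~\ref{lem4} and Theorem~\ref{th2A}. Lemma~\ref{lem4} is exactly the tool that replaces the boundary-component condition $h(E)\geqslant r$ by the much softer ``image lies in a fixed compactum $K\ne\overline{{\Bbb R}^n}$,'' which is verified directly from the uniform convergence. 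You should route your proof through $\frak{B}^{\delta,K}_{A,Q}$ and Theorem~\ref{th2A} rather than through $\frak{A}^{\delta,r}_{A,Q}$ and Theorem~\ref{th1A}; with that change the remainder of your argument goes through.
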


\begin{remark}\label{rem1}
In particular, the conclusion of Theorem~\ref{th1A} holds, if
instead of above conditions on $Q,$ we require that $Q\in L_{\rm
loc}^1(D).$
\end{remark}

\medskip
\centerline{\bf 2. Proof of Theorem~\ref{th1}}

\medskip
The following statement holds, see, e.g.,
\cite[Theorem~1.I.5.46]{Ku$_2$}).

\medskip
\begin{proposition}\label{pr2}
{\it\, Let $A$ be a set in a topological space $X.$ If the set $C$
is connected and $C\cap A\ne \varnothing\ne C\setminus A,$ then
$C\cap
\partial A\ne\varnothing.$}
\end{proposition}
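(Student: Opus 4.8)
The plan is to argue by contradiction, exploiting the partition of the ambient space into the interior, the boundary, and the exterior of $A.$ Suppose, contrary to the assertion, that $C\cap\partial A=\varnothing.$ Recall that $X=\mathrm{int}\,A\cup\partial A\cup\mathrm{int}(X\setminus A),$ and that these three sets are pairwise disjoint (indeed $\mathrm{int}\,A\cup\partial A=\overline{A}$ and $\mathrm{int}(X\setminus A)=X\setminus\overline{A}$). Hence the assumption $C\cap\partial A=\varnothing$ yields the decomposition $C=(C\cap\mathrm{int}\,A)\cup(C\cap\mathrm{int}(X\setminus A)),$ a union of two disjoint sets, each of which is open in the subspace topology of $C,$ being the intersection of $C$ with a set open in $X.$

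Now I would invoke the connectedness of $C$: one of the two pieces in the above decomposition must be empty. First I rule out the possibility $C\cap\mathrm{int}\,A=\varnothing,$ for that would force $C\subset\mathrm{int}(X\setminus A)\subset X\setminus A,$ contradicting the hypothesis $C\cap A\ne\varnothing.$ Symmetrically, $C\cap\mathrm{int}(X\setminus A)=\varnothing$ would force $C\subset\mathrm{int}\,A\subset A,$ contradicting $C\setminus A\ne\varnothing.$ In either case we reach a contradiction, and therefore $C\cap\partial A\ne\varnothing,$ as desired.

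There is no genuine obstacle here; the only points requiring care are the elementary facts that $\mathrm{int}\,A,$ $\partial A,$ and $\mathrm{int}(X\setminus A)$ form a partition of $X,$ and that intersecting $C$ with an open subset of $X$ produces a relatively open subset of $C.$ Alternatively, the statement may simply be quoted from the reference cited immediately before it.
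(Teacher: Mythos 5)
Your argument is correct and complete: under the contradiction hypothesis $C\cap\partial A=\varnothing,$ the sets $C\cap\mathrm{int}\,A$ and $C\cap\mathrm{int}(X\setminus A)$ form a relatively open partition of $C,$ so connectedness forces one of them to be empty, and either emptiness contradicts one of the two hypotheses $C\cap A\ne\varnothing,$ $C\setminus A\ne\varnothing.$ The paper itself supplies no proof of this proposition, quoting it directly from Kuratowski (Theorem~1.I.5.46 of the cited reference), exactly as your closing sentence anticipates; the argument you give is the standard textbook one and there is no divergence of approach to report.
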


\medskip The following statement can be found
in~\cite[Lemma~4.2]{Vu}.

\medskip
\begin{proposition}\label{pr1}
{\it\, Let $D$ be an open half space or an open ball in ${\Bbb R}^n$
and let $E$ and $F$ be subsets of $\overline{D}.$ Then
$$M(\Gamma(E, F, \overline{D}))
\geqslant \frac{1}{2}\cdot M(\Gamma(E, F, \overline{{\Bbb R}^n}))\,.$$
}
\end{proposition}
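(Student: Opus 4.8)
The plan is to exploit the reflection symmetry of $D$ about its boundary. Write $D^{*}:=\overline{{\Bbb R}^n}\setminus\overline D$ and let $\varphi\colon\overline{{\Bbb R}^n}\to\overline{{\Bbb R}^n}$ be the reflection fixing $\partial D$: when $D$ is a half-space this is the Euclidean reflection in the bounding hyperplane, and when $D=B(x_0,r)$ it is the inversion $\varphi(x)=x_0+r^{2}(x-x_0)/|x-x_0|^{2}$ (with $\varphi(x_0)=\infty$ and $\varphi(\infty)=x_0$). In either case $\varphi$ is a smooth involution of $\overline{{\Bbb R}^n}$ away from its (at most one) pole, its differential at every regular point is a conformal linear map with scalar factor $|\varphi^{\prime}(x)|$ (equal to $1$ in the half-space case and to $r^{2}/|x-x_0|^{2}$ in the ball case), so that $|\varphi^{\prime}(x)|^{n}\,dm(x)=dm(\varphi(x))$; moreover $\varphi$ fixes $\partial D$ pointwise and interchanges $D$ with $D^{*}$.

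Next I would introduce the folding map $\psi\colon\overline{{\Bbb R}^n}\to\overline D$ given by $\psi=\mathrm{id}$ on $\overline D$ and $\psi=\varphi$ on $D^{*}$. Since $\varphi$ restricts to the identity on $\partial D$, the map $\psi$ is continuous and surjective onto $\overline D$, and it is a conformal diffeomorphism on each of $D$ and $D^{*}$ (in the ball case the pole $x_0$ of $\varphi$ lies in the interior of $\overline D$, where $\psi$ is the identity, so the pole is never used when $\varphi$ is applied on $D^{*}$). Consequently, for any locally rectifiable path $\gamma$ in $\overline{{\Bbb R}^n}$ the image $\psi\circ\gamma$ is again a locally rectifiable path in $\overline D$, and for every nonnegative Borel function $\rho$ on $\overline D$ one has the arc-length identity $\int_{\psi\circ\gamma}\rho\,ds=\int_{\gamma}\rho^{*}\,ds$, where $\rho^{*}:=\rho$ on $\overline D$ and $\rho^{*}(x):=\rho(\varphi(x))\,|\varphi^{\prime}(x)|$ for $x\in D^{*}$ (on the subarcs of $\gamma$ inside $\overline D$ the two integrands coincide, and on the subarcs inside $D^{*}$ this is just the change of variables $y=\varphi(x)$ for line integrals).

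With this in hand the argument is short. Let $\rho$ be an arbitrary admissible function for $\Gamma(E,F,\overline D)$ and form $\rho^{*}$ as above. Take any locally rectifiable $\gamma\in\Gamma(E,F,\overline{{\Bbb R}^n})$; its endpoints lie in $E\cup F\subset\overline D$, where $\psi$ is the identity, so $\psi\circ\gamma$ joins $E$ to $F$ within $\overline D$, i.e. $\psi\circ\gamma\in\Gamma(E,F,\overline D)$. Hence $\int_{\gamma}\rho^{*}\,ds=\int_{\psi\circ\gamma}\rho\,ds\geqslant 1$, so $\rho^{*}$ is admissible for $\Gamma(E,F,\overline{{\Bbb R}^n})$, and therefore
$$M(\Gamma(E,F,\overline{{\Bbb R}^n}))\leqslant\int\limits_{\overline{{\Bbb R}^n}}(\rho^{*})^{n}\,dm=\int\limits_{\overline D}\rho^{n}\,dm+\int\limits_{D^{*}}\rho^{n}(\varphi(x))\,|\varphi^{\prime}(x)|^{n}\,dm(x).$$
Changing variables $y=\varphi(x)$ in the last integral (recall $\varphi(D^{*})=D$ and $|\varphi^{\prime}(x)|^{n}\,dm(x)=dm(y)$) turns it into $\int_{D}\rho^{n}\,dm\leqslant\int_{\overline D}\rho^{n}\,dm$, so $M(\Gamma(E,F,\overline{{\Bbb R}^n}))\leqslant 2\int_{\overline D}\rho^{n}\,dm$. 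Taking the infimum over all admissible $\rho$ gives $M(\Gamma(E,F,\overline{{\Bbb R}^n}))\leqslant 2\,M(\Gamma(E,F,\overline D))$, which is the claim.

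The only genuinely delicate point is the last assertion of the preliminaries: that $\psi\circ\gamma$ is a legitimate competitor and that the arc-length identity holds. One must check that local rectifiability survives the folding — this is where it matters that the pole of $\varphi$ is interior to $\overline D$, so that a whole neighbourhood of the pole lies in the region where $\psi$ is the identity and the pole is never attained while traversing $D^{*}$ — that $\psi\circ\gamma$ is continuous across $\partial D$ (true because $\varphi|_{\partial D}=\mathrm{id}$), and that subarcs of $\gamma$ lying in $\partial D$ contribute equally to both sides; in $\overline{{\Bbb R}^n}$ the point $\infty$ must be allotted to $\partial D$ in the half-space case and to $D^{*}$ in the ball case. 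All of this is routine once $\varphi$ is recognised as a conformal involution, but it is precisely the (at most) two-fold covering of $\overline D$ by $\psi$ that produces the constant $\frac{1}{2}$.
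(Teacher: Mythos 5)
The paper does not prove Proposition~\ref{pr1}; it is imported verbatim as Lemma~4.2 of Vuorinen~\cite{Vu}, so there is no in-text argument to compare against. Your reflection--folding proof is correct and is, in fact, the standard way this lemma is established (and essentially Vuorinen's own route): you reflect an admissible $\rho$ for $\Gamma(E,F,\overline D)$ across $\partial D$ via the conformal involution $\varphi$, observe that the folded path $\psi\circ\gamma$ of any $\gamma\in\Gamma(E,F,\overline{\Bbb R}^n)$ lands in $\Gamma(E,F,\overline D)$ because the endpoints sit in $\overline D$ where $\psi=\mathrm{id}$, and use $|\varphi'(x)|^n\,dm(x)=dm(\varphi(x))$ together with $m(\partial D)=0$ to get $\int(\rho^*)^n\,dm=2\int_{\overline D}\rho^n\,dm$. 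You have also correctly isolated the only delicate points: local rectifiability is preserved under $\psi$ because $|\varphi'|\leqslant 1$ on $D^*$ in the ball case (and $\equiv 1$ in the half-space case), the pole of $\varphi$ lies interior to $\overline D$ and is therefore never invoked, and $\infty$ sits in $\partial D$ for the half-space and in $D^*$ for the ball. No gaps.
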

Let $(X, \mu)$ be a metric space with measure $\mu$ and of Hausdorff
dimension $n.$ For each real number $n\geqslant 1,$ we define {\it
the Loewner function} $\Phi_n:(0, \infty)\rightarrow [0, \infty)$ on
$X$ as
\begin{equation}\label{eq2H}
\Phi_n(t)=\inf\{M_n(\Gamma(E, F, X)): \Delta(E, F)\leqslant t\}\,,
\end{equation}
where the infimum is taken over all disjoint nondegenerate continua
$E$ and $F$ in $X$ and
$$\Delta(E, F):=\frac{{\rm dist}\,(E,
F)}{\min\{d(E), d(F)\}}\,.$$
A pathwise connected metric measure space $(X, \mu)$ is said to be a
{\it Loewner space} of exponent $n,$ or an $n$-Loewner space, if the
Loewner function $\Phi_n(t)$ is positive for all $t> 0$ (see
\cite[Section~2.5]{MRSY} or \cite[Ch.~8]{He}). Observe that, ${\Bbb
R}^n$ is a Loewner space (see \cite[Theorem~8.2]{He}).

\medskip
Let $X$ be a Loewner space in which the relation $\mu(B_R)\leqslant
C^{\,*}R^n$ holds for some constant $C^{\,*}\geqslant 1,$ for some
exponent $n>0$ and for all closed balls $B_R$ of radius $R>0.$ Let
$\Phi_n(t)$ be a Loewner function in~(\ref{eq2H}), corresponding to
the space $X.$ Then, by virtue of~\cite[Theorem~8.23]{He}, there is
$\delta_0>0$ and some constant $C>0$ such that
\begin{equation}\label{eq3C}
\Phi_n(t)\geqslant C\log\frac{1}{t}
\end{equation}
for $0<t<\delta_0.$

\medskip
The following statement holds, see e.g. \cite[Theorem~7.2]{MRSY}.

\medskip
\begin{proposition}\label{pr3}{\it\,
Let $D$ be a domain in ${\Bbb R}^n,$ $n\geqslant 2,$ and
$Q:D\rightarrow[0,\infty]$ a measurable function. A homeomorphism
$f:D\rightarrow {\Bbb R}^n$ is a ring $Q-$homeomorphism at a point
$x_0$ if and only if for every $0<r_1<r_2< d_0={\rm dist}\,
(x_0,\partial D),$
$$M(\Gamma (f(S_1),f(S_2),f(D)))\leqslant
\frac{\omega_{n-1}}{I^{n-1}}$$
where $\omega_{n-1}$ is the area of the unit sphere in ${\Bbb R}^n,$
$q_{x_0}(r)$ is the mean value of $Q$ over the sphere $|x-x_0|=r,$
$S_j=\{ x\in{\Bbb R}^n : |x-x_0|=r_j\},$ $j=1,2,$  and
$I=I(r_1,r_2)=\int\limits_{r_1}^{r_2}
\frac{dr}{rq_{x_0}^{\frac{1}{n-1}}(r)}.$}\end{proposition}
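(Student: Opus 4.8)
\medskip
The statement is a known one (it is precisely \cite[Theorem~7.2]{MRSY}), and I would prove it by reducing the modulus characterization of ring $Q$-homeomorphisms to a one-dimensional variational problem. First I would note that since $f$ is a homeomorphism of $D$ onto $f(D)$ and $S_1,S_2\subset D$ (which holds because $r_1<r_2<d_0={\rm dist}\,(x_0,\partial D)$), a curve joins $S_1$ and $S_2$ inside $D$ if and only if its $f$-image joins $f(S_1)$ and $f(S_2)$ inside $f(D)$; hence the families $f(\Gamma(S_1,S_2,D))$ and $\Gamma(f(S_1),f(S_2),f(D))$ are literally the same collection of curves, and their moduli coincide. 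Therefore, by the defining conditions~(\ref{eq2*!A})--(\ref{eq8BC}), the homeomorphism $f$ is a ring $Q$-homeomorphism at $x_0$ if and only if
$$M(\Gamma(f(S_1),f(S_2),f(D)))\ \leqslant\ \inf\limits_{\eta}\ \int\limits_{A\cap D}Q(x)\,\eta^n(|x-x_0|)\,dm(x)\qquad\text{for all }\ 0<r_1<r_2<d_0\,,$$
the infimum being taken over all Lebesgue measurable $\eta:(r_1,r_2)\to[0,\infty]$ with $\int_{r_1}^{r_2}\eta(r)\,dr\geqslant1$. (The ``only if'' implication is immediate from~(\ref{eq2*!A}); conversely, the displayed inequality with the infimum on the right forces~(\ref{eq2*!A}) for every admissible~$\eta$.) So everything reduces to evaluating that infimum.

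\medskip
For the second step I would compute this infimum explicitly. Since $r_2<{\rm dist}\,(x_0,\partial D)$, the ring $A=A(x_0,r_1,r_2)$ lies entirely in $D$, so $A\cap D=A$; passing to spherical coordinates centred at $x_0$ and applying Fubini's theorem together with the definition of $q_{x_0}(r)$ (which is a measurable function of $r$ because $Q$ is measurable) gives
$$\int\limits_{A}Q(x)\,\eta^n(|x-x_0|)\,dm(x)=\omega_{n-1}\int\limits_{r_1}^{r_2}\eta^n(r)\,q_{x_0}(r)\,r^{n-1}\,dr\,.$$
Then I would minimize $\int_{r_1}^{r_2}\eta^n q_{x_0}r^{n-1}\,dr$ subject to $\int_{r_1}^{r_2}\eta\,dr\geqslant1$: factoring $\eta=\bigl[\eta\,(q_{x_0}r^{n-1})^{1/n}\bigr]\cdot\bigl[(q_{x_0}r^{n-1})^{-1/n}\bigr]$ and applying H\"older's inequality with exponents $n$ and $n/(n-1)$ yields
$$1\ \leqslant\ \int\limits_{r_1}^{r_2}\eta\,dr\ \leqslant\ \left(\int\limits_{r_1}^{r_2}\eta^n q_{x_0}r^{n-1}\,dr\right)^{1/n}\left(\int\limits_{r_1}^{r_2}\frac{dr}{r\,q_{x_0}^{1/(n-1)}(r)}\right)^{(n-1)/n}=\left(\int\limits_{r_1}^{r_2}\eta^n q_{x_0}r^{n-1}\,dr\right)^{1/n}I^{(n-1)/n}\,,$$
so $\int_{r_1}^{r_2}\eta^n q_{x_0}r^{n-1}\,dr\geqslant I^{1-n}$, and the admissible choice $\eta_0(r)=\bigl(I\,r\,q_{x_0}^{1/(n-1)}(r)\bigr)^{-1}$ attains equality. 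Hence the infimum equals $\omega_{n-1}I^{1-n}=\omega_{n-1}/I^{n-1}$, and together with the first step this is exactly the claimed equivalence.

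\medskip
I expect the only real obstacle to be a clean treatment of the degenerate cases, which I would dispose of as follows. If $I=0$ the right-hand side $\omega_{n-1}/I^{n-1}$ is $+\infty$ and the inequality is vacuous; if $I=\infty$ the right-hand side is $0$, and I would check that the infimum is still $0$ by truncating, i.e. replacing $q_{x_0}$ by $\max\{q_{x_0},\varepsilon\}$, computing with the corresponding $\eta_0$, and letting $\varepsilon\to0$ (so $I_\varepsilon\to\infty$ while the integral $\to0$); and if $q_{x_0}(r)=\infty$ on a set of positive measure the optimal $\eta_0$ simply vanishes there and the H\"older estimate is unaffected. One should also be mindful of the convention for $d_0$: since only $r_2<{\rm dist}\,(x_0,\partial D)$ is used, the annulus sits inside $D$, which is what makes the polar-coordinate identity above valid without the truncation to $A\cap D$. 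Apart from these borderline points, the argument is nothing more than the elementary H\"older inequality computed above.
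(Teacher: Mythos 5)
Your argument is correct and is the standard proof of this fact; the key step of computing the infimum via polar coordinates and H\"older's inequality (with the extremal $\eta_0=(I\,r\,q_{x_0}^{1/(n-1)})^{-1}$) is exactly how~\cite[Theorem~7.2]{MRSY} establishes the equivalence. The paper itself does not prove this proposition but simply cites~\cite{MRSY}, so there is no alternate route in the source to compare against.
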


\medskip
\begin{lemma}\label{lem1}{\it\,Let $D$ be a domain in ${\Bbb R}^n,$
$n\geqslant 2,$ let $\delta>0$ and let $K$ be a compactum in $D.$
Let $x, y \in K,$ $x\ne y,$ and let $f\in\frak{F}^{\delta}_{K,
Q}(D).$ Assume that $f(x)\ne\infty\ne f(y)$ and
$$|f(x)-f(y)|<\min\left\{\frac{\delta_0\delta}{2},
\frac{\delta}{4}\right\}\,,$$
where $\delta_0$ is a number from~(\ref{eq3C}). Then
$\overline{B(f(x), \delta/2)}\subset f(D).$

\medskip
In addition, let $r=r(t)=y+(x-y)t,$ $-\infty<t<\infty.$ Then there
exists $\kappa_1\geqslant 1$ such that $z^{(1)}=y+(x-y)\kappa_1$ and
\begin{equation}\label{eq14***}
C\cdot\log \frac{\delta}{2|f(x)-f(y)|}\leqslant 2\cdot
M(f(\Gamma(S(z^{(1)}, \varepsilon^{(1)}), S(z^{(1)},
\varepsilon^{(2)}), A(z^{(1)}, \varepsilon^{(1)},
\varepsilon^{(2)}))))\,,
\end{equation}
where $C$ is a constant in~(\ref{eq3C}),
$\varepsilon^{(1)}:=|z^{(1)}-x|$ and
$\varepsilon^{(2)}:=|z^{(1)}-y|.$}
\end{lemma}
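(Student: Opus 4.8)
The plan is to establish the two assertions in turn. For the inclusion $\overline{B(f(x),\delta/2)}\subset f(D)$ I would argue by contradiction. Since $x\in K$ and $f\in\frak{F}^{\delta}_{K, Q}(D),$ we have $h(f(x),\partial f(D))\geqslant h(f(K),\partial f(D))\geqslant\delta.$ If $\overline{B(f(x),\delta/2)}$ were not contained in $f(D),$ then, as a connected set meeting both $f(D)$ (it contains $f(x)$) and its complement, it would meet $\partial f(D)$ by Proposition~\ref{pr2}; at a point $w$ of this intersection — which is finite, since $w\in\overline{B(f(x),\delta/2)}$ — we would get $h(f(x),w)\leqslant|f(x)-w|\leqslant\delta/2<\delta$ (using $\sqrt{1+|f(x)|^2}\sqrt{1+|w|^2}\geqslant 1$), contradicting the previous inequality. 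The same reasoning yields $\overline{B(f(x),\delta'')}\subset f(D)$ for every $\delta''<\delta,$ and below I would use this with $\delta'':=\delta/2+|f(x)-f(y)|,$ which satisfies $\delta/2<\delta''<3\delta/4<\delta$ by the hypothesis $|f(x)-f(y)|<\delta/4.$

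For the modulus estimate, set $B'':=B(f(x),\delta''),$ $\mathcal{K}:=f^{-1}(\overline{B''}).$ By the first part $\overline{B''}\subset f(D),$ so $\mathcal{K}$ is a compact subset of $D$ (being $f^{-1}$ of a compactum contained in $f(D)$) containing $x$ and $y.$ I would pick $\kappa_1>1$ so large that $2(\kappa_1-1)|x-y|>d(\mathcal{K})$ and put $z^{(1)}:=r(\kappa_1)=y+(x-y)\kappa_1,$ $\varepsilon^{(1)}:=|z^{(1)}-x|=(\kappa_1-1)|x-y|,$ $\varepsilon^{(2)}:=|z^{(1)}-y|=\kappa_1|x-y|,$ $S_i:=S(z^{(1)},\varepsilon^{(i)}),$ $A:=A(z^{(1)},\varepsilon^{(1)},\varepsilon^{(2)}).$ Then $x\in S_1,$ $y\in S_2,$ $S_1\cap S_2=\varnothing,$ and neither $S_1$ nor $S_2$ — whose diameters $2\varepsilon^{(1)}$ and $2\varepsilon^{(2)}$ exceed $d(\mathcal{K})$ — lies inside $\mathcal{K}.$ Let $E'$ be the connected component of $x$ in $S_1\cap\mathcal{K}$ and $F'$ that of $y$ in $S_2\cap\mathcal{K}.$ Since $S_i$ is connected and $S_i\cap\mathcal{K}$ is a closed proper subset of it, a ``clopen'' argument (if $f(E')\subset B''$ then $E'$ is relatively open and closed in $S_1,$ hence all of $S_1,$ hence inside $\mathcal{K}$ — impossible) forces $f(E')$ and $f(F')$ to meet the sphere $S(f(x),\delta'').$ Therefore $f(E'),f(F')$ are disjoint non-degenerate continua in $\overline{B''}\subset f(D)$ with $d(f(E'))\geqslant\delta'',$ $d(f(F'))\geqslant\delta''-|f(x)-f(y)|=\delta/2$ and ${\rm dist}(f(E'),f(F'))\leqslant|f(x)-f(y)|;$ hence, using $\min\{d(f(E')),d(f(F'))\}\geqslant\delta/2,$ we get $\Delta(f(E'),f(F'))\leqslant\frac{2|f(x)-f(y)|}{\delta}<\delta_0$ by the hypothesis $|f(x)-f(y)|<\delta_0\delta/2.$

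The last step transports the Loewner modulus through $f.$ For an arbitrary curve $\gamma\colon[0,1]\to\overline{B''}$ with $\gamma(0)\in f(E'),$ $\gamma(1)\in f(F'),$ the curve $\widetilde\gamma:=f^{-1}\circ\gamma$ lies in $\mathcal{K}\subset D$ and joins $S_1$ to $S_2,$ and the continuous function $t\mapsto|\widetilde\gamma(t)-z^{(1)}|$ runs from $\varepsilon^{(1)}$ to $\varepsilon^{(2)};$ letting $b$ be the first $t$ with $|\widetilde\gamma(t)-z^{(1)}|=\varepsilon^{(2)}$ and $a<b$ the last $t<b$ with $|\widetilde\gamma(t)-z^{(1)}|=\varepsilon^{(1)},$ the intermediate value theorem shows $\widetilde\gamma|_{[a,b]}$ joins $S_1$ to $S_2$ and stays in the open annulus $A$ on $(a,b),$ so $\widetilde\gamma|_{[a,b]}\in\Gamma(S_1,S_2,A)$ and (lying in $D$) has $f$-image equal to a subcurve of $\gamma.$ Thus every curve of $\Gamma(f(E'),f(F'),\overline{B''})$ has a subcurve in $f(\Gamma(S_1,S_2,A)),$ so $M(f(\Gamma(S_1,S_2,A)))\geqslant M(\Gamma(f(E'),f(F'),\overline{B''})).$ Now Proposition~\ref{pr1} applied to the ball $B'',$ followed by the definition~(\ref{eq2H}) of the Loewner function of ${\Bbb R}^n$ (an $n$-Loewner space, so~(\ref{eq3C}) applies), gives
$$2\,M(f(\Gamma(S_1,S_2,A)))\geqslant M(\Gamma(f(E'),f(F'),\overline{{\Bbb R}^n}))\geqslant\Phi_n(\Delta(f(E'),f(F')))\geqslant C\log\frac{1}{\Delta(f(E'),f(F'))}\,,$$
and since $\Delta(f(E'),f(F'))\leqslant\frac{2|f(x)-f(y)|}{\delta}$ the right side is $\geqslant C\log\frac{\delta}{2|f(x)-f(y)|},$ which is~(\ref{eq14***}).

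The one genuinely delicate point is the calibration of $\kappa_1$ together with the subarc extraction: one must move $z^{(1)}$ far enough along the ray that the Euclidean spheres $S_1,S_2$ can no longer be fitted inside the compactum $f^{-1}(\overline{B''})$ — this is exactly what forces $f(E')$ and $f(F')$ to touch $S(f(x),\delta'')$ and hence to have diameter at least $\delta/2$ — while the intermediate value argument on $|\widetilde\gamma(\cdot)-z^{(1)}|$ is what lets the homeomorphism carry the large Loewner modulus of $\Gamma(f(E'),f(F'),\overline{B''})$ over to $f(\Gamma(S_1,S_2,A)).$ Proposition~\ref{pr1} and the Loewner bound~(\ref{eq3C}) then enter purely mechanically.
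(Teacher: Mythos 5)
Your overall design departs genuinely from the paper's: instead of taking $E$ and $F$ to be the $f$-images of line-segment pieces $r|_{[1,\kappa_1]}$ and $r|_{[\kappa_2,0]}$, you take $E'$ to be the component of $x$ in $S_1\cap\mathcal{K}$ and $F'$ that of $y$ in $S_2\cap\mathcal{K}$, with $\mathcal{K}:=f^{-1}\bigl(\overline{B(f(x),\delta'')}\bigr)$ and $\delta''=\delta/2+|f(x)-f(y)|$. The clopen argument (a component of an open subset of the locally connected sphere is open in the sphere), the intermediate-value extraction of a subarc through the annulus, Proposition~\ref{pr1}, and the Loewner estimate~(\ref{eq3C}) are all applied correctly, and the stated constant $2|f(x)-f(y)|/\delta$ drops out directly. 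However, the calibration of $\kappa_1$ has a genuine gap. You choose $\kappa_1>1+d(\mathcal{K})/(2|x-y|)$ so that $d(S_1)>d(\mathcal{K})$, forcing $S_1\not\subset\mathcal{K}$; but nothing in this choice keeps $z^{(1)}=r(\kappa_1)$ inside $D$. For bounded $D$ the ray leaves it at a finite parameter $t_1=\sup\{t\geqslant 1:r(t)\in D\}$, and $(t_1-1)|x-y|$ is of order ${\rm dist}\,(x,\partial D)$, which can be far smaller than $d(\mathcal{K})/2$; then necessarily $\kappa_1>t_1$ and $z^{(1)}\notin D$. This is fatal for the intended use of the lemma: in the proof of Theorem~\ref{th1}, and explicitly in passing from~(\ref{eq15}) to~(\ref{eq14***A}), the ring inequality~(\ref{eq2*!A}) is invoked at $x_0=z^{(1)}$, and the definition of $\frak{F}^{\delta}_{K,Q}(D)$ supplies~(\ref{eq2*!A}) only at points $x_0\in D$. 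The paper gets $z^{(1)}\in D$ for free by setting $z^{(1)}=f^{-1}(p_1)$ with $p_1\in f(D)$.

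The repair stays inside your framework. Since $x=r(1)\in D$ and $D$ is open, $t_1>1$. If $t_1<\infty$, take any $\kappa_1\in\bigl[(t_1+1)/2,\,t_1\bigr)$: then $z^{(1)}=r(\kappa_1)\in D$, while the antipode of $x$ on $S_1$, namely $r(2\kappa_1-1)$, and the antipode of $y$ on $S_2$, namely $r(2\kappa_1)$, both have parameter $\geqslant t_1$ and hence lie outside $D\supset\mathcal{K}$; so $S_1,S_2\not\subset\mathcal{K}$ and your clopen argument still applies unchanged. If $t_1=\infty$ your original choice of $\kappa_1$ already has $z^{(1)}\in D$. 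With this modification the proof is correct and remains a genuinely different route from the paper's (sphere components and a topological separation in place of parametrized boundary-reaching arcs), and it even recovers the advertised constant more cleanly.
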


\medskip
\begin{proof}
We fix $x, y\in K\subset D,$ $x\ne y,$ and $f\in \frak{F}^{A,
\delta}_{K, Q}(D).$ Draw through the points $x$ and $y$ a straight
line $r=r(t)=y+(x-y)t,$ $-\infty<t<\infty$ (see Figure~\ref{fig2}).
\begin{figure}[h]
\centerline{\includegraphics[scale=0.4]{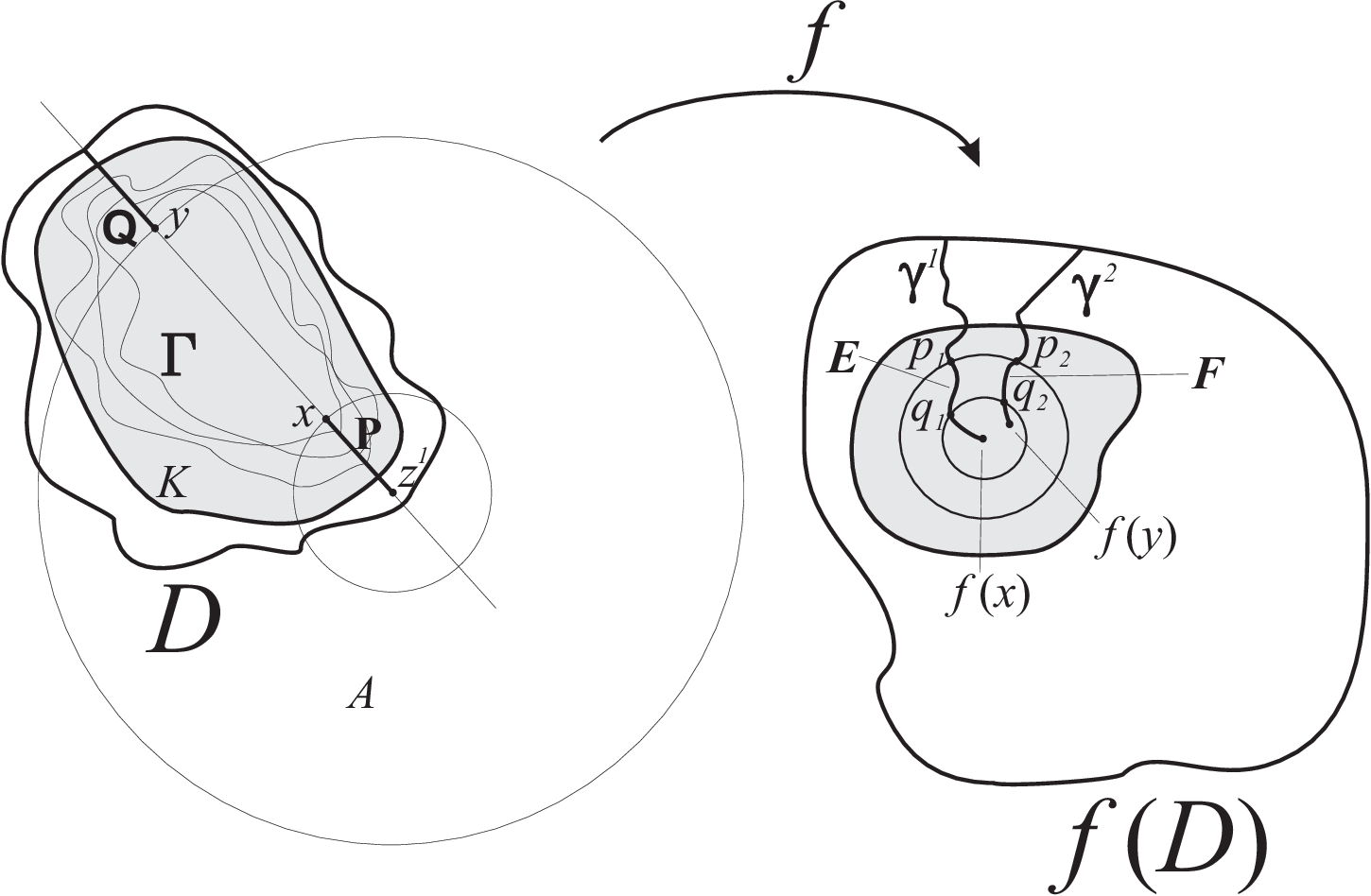}} \caption{To
proof of Theorem~\ref{th1}}\label{fig2}
\end{figure}
Let $t_1=\sup\limits_{t\geqslant 1: r(t)\in D}t$ and
$t_2:=\inf\limits_{t\leqslant 0: r(t)\in D}t.$ If $r(t)\in D$ for
any $t\geqslant 1,$ we set $t_1=+\infty.$ Similarly, if $r(t)\in D$
for any $t\leqslant 0,$ we set $t_2=-\infty.$ Let $P:=r|_{[1, t_1)}$
and $Q:=r|_{(t_2, 0]}.$ Let $\gamma^1=f(P)$ and $\gamma^2=f(Q).$ Due
to Proposition~13.5 in \cite{MRSY} homeomorphisms preserve the
boundary of a domain. Thus, $\gamma^1(t)\rightarrow \partial f(D)$
as $t\nearrow t_1$ and $\gamma^2(t)\rightarrow \partial f(D)$ as
$t\searrow t_2.$ Observe that the ball $\overline{B(f(x),
\delta/2)}$ lies inside $f(D).$ Indeed, otherwise $\overline{B(f(x),
\delta/2)}\cap f(D)\ne\varnothing\ne \overline{B(f(x),
\delta/2)}\setminus f(D)$ and, consequently, $\overline{B(f(x),
\delta/2)}\cap \partial f(D)\ne\varnothing$ by
Proposition~\ref{pr2}. Now, let $y\in \overline{B(f(x),
\delta/2)}\cap
\partial f(D).$ Now, $d(f(K), \partial f(D))\leqslant
|f(x)-y|\leqslant \delta/2<\delta$ that contradicts the definition
of the class $\frak{F}^{A, \delta}_{K, Q}(D).$

\medskip
Observe that $|\gamma^1|\cap B(f(x), \delta/4)\ne\varnothing\ne
|\gamma^1|\setminus B(f(x), \delta/4)$ and $|\gamma^2|\cap B(f(x),
\delta/4)\ne\varnothing\ne |\gamma^1|\setminus B(f(x), \delta/4).$
By Proposition~\ref{pr2} $|\gamma^1|\cap S(f(x),
\delta/4)\ne\varnothing\ne |\gamma^2|\cap S(f(x), \delta/4).$
Similarly, $|\gamma^1|\cap S(f(x), \delta/2)\ne\varnothing\ne
|\gamma^2|\cap S(f(x), \delta/2).$ Let $q_1=\gamma^1(\vartheta_1)\in
S(f(x), \delta/4)$ and $q_2=\gamma^2(\vartheta_2)\in S(f(x),
\delta/4)$ and let $p_1=\gamma^1(\kappa_1)\in S(f(x), \delta/4)$ and
$p_2=\gamma^2(\kappa_2)\in S(f(x), \delta/4)$ for some
$\kappa_1\geqslant \vartheta_1\geqslant 1$ and $\kappa_2\leqslant
\vartheta_2\leqslant 0.$ Then $E:=\gamma^1|_{[1, \kappa_1]}$ and
$F:=\gamma^2|_{[\kappa_2, 0]}$ are continua in $f(D)$ such that
$d(E)\geqslant d(p_1, q_1)=\delta/4$ and $d(F)\geqslant d(p_2,
q_2)=\delta/4.$ Without loss of generality we may assume that $E$
and $F$ belong to $\overline{B(f(x), \delta/2)}.$

\medskip
Next, we should use the fact that the $n$-dimensional Euclidean
space is a Loewner space (see, for example, \cite[Theorem~8.2]{He}).
Observe that,
%
%
$$\Delta(E, F):=\frac{{\rm dist}\,(E, F)}{\min\{d(E), d(F)\}}
\leqslant \frac{2|f(x)-f(y)|}{\delta}$$
%
because ${\rm dist}\,(E, F)\leqslant |f(x)-f(y)|.$
Then, by the definition of the Loewner function $\Phi_n(t)$
in~(\ref{eq2H}) we obtain that
\begin{equation}\label{eq4C}
\Phi_n\left(\frac{2|f(x)-f(y)|}{\delta}\right)\leqslant M(\Gamma(E,
F, {\Bbb R}^n))\,.
\end{equation}
Now, by~(\ref{eq3C})
\begin{equation}\label{eq5C}
C\cdot\log \frac{\delta}{2|f(x)-f(y)|}\leqslant
\Phi_n\left(\frac{2|f(x)-f(y)|}{\delta}\right)\,.
\end{equation}
Now, by~(\ref{eq4C}) and~(\ref{eq5C}) we obtain that
\begin{equation}\label{eq6C}
C\cdot\log \frac{\delta}{2|f(x)-f(y)|}\leqslant M(\Gamma(E, F, {\Bbb
R}^n))\,.
\end{equation}
Since by the proving above $\overline{B(f(x), \delta/2)}\subset
f(D),$ we obtain that $$\Gamma(E, F, \overline{B(f(x),
\delta/2)})\subset \Gamma(E, F, f(D))\,.$$ Now, by
Proposition~\ref{pr1},
\begin{equation}\label{eq1AA}
M(\Gamma(E, F, \overline{{\Bbb R}^n}))= M(\Gamma(E, F, {\Bbb
R}^n))\leqslant 2 M(\Gamma(E, F, \overline{B(f(x),
\delta/2)}))\leqslant 2M(\Gamma(E, F, f(D)))\,.
\end{equation}
Now, by~(\ref{eq6C}) and~(\ref{eq1AA}) we obtain that
\begin{equation}\label{eq1AAA}
C\cdot\log \frac{\delta}{2|f(x)-f(y)|}\leqslant 2M(\Gamma(E, F,
f(D)))\,.
\end{equation}
Now let us prove the upper bound for $M(\Gamma(E, F, f(D))).$
Indeed, by the definition,
\begin{equation}\label{eq2}
\Gamma(E, F, f(D))=f(\Gamma(P_1, Q_1, D))\,,
\end{equation}
where $P_1:=f^{\,-1}(E)$ and $Q_1:=f^{\,-1}(F).$ In addition, let us
to show that
\begin{equation}\label{eq1A}
\Gamma(P_1, Q_1, D)>\Gamma(S(z^{(1)}, \varepsilon^{(1)}), S(z^{(1)},
\varepsilon^{(2)}), A(z^{(1)}, \varepsilon^{(1)},
\varepsilon^{(2)}))\,,
\end{equation}
where $z^{(1)}:=f^{\,-1}(p_1),$ $\varepsilon^{(1)}:=|z^{(1)}-x|$ and
$\varepsilon^{(2)}:=|z^{(1)}-y|.$ Observe that
$$|y-x|+\varepsilon^{(1)}=$$
\begin{equation}\label{eq5B}
=|y-x|+|x-z^{(1)}|= |z^{(1)}-y|=\varepsilon^{(2)}\,,
\end{equation}
and, thus, $\varepsilon^{(1)}<\varepsilon^{(2)}.$ Let
$\gamma\in\Gamma(P_1, Q_1, D).$ Then $\gamma:[0, 1]\rightarrow {\Bbb
R}^n,$ $\gamma(0)\in P_1,$ $\gamma(1)\in Q_1$ and $\gamma(s)\in D$
for $0<s<1.$ Recall that,
$$z^{(1)}=y+(x-y)\kappa_1$$
for $\kappa_1\geqslant 1.$ Since $\gamma(0)\in P_1,$ there is
$1\leqslant t\leqslant \kappa_1$ such that $\gamma(0)=y+(x-y)t.$
Thus,
$$|\gamma(0)-z^{(1)}|=|(x-y)(\kappa_1-t)|$$
\begin{equation}\label{eq2B}
\leqslant |x-y|(\kappa_1-1)=|(x-y)\kappa_1+y-x|\end{equation}
$$=|x-z^{(1)}|=\varepsilon^{(1)}\,.$$
On the other hand, since $\gamma(1)\in Q_1,$ there is $p\leqslant 0$
such that
$$\gamma(1)=y+(x-y)p\,.$$ In this case, we obtain that
$$
|\gamma(1)-z^{(1)}|=|(x-y)(\kappa_1-p)|$$
\begin{equation}\label{eq3A}
\geqslant |(x-y)\kappa_1|=|(x-y)\kappa_1 +y-y|
\end{equation}
$$=|y-z^{(1)}|=\varepsilon^{(2)}\,.$$
Since $\varepsilon^{(1)}<\varepsilon^{(2)},$ due to~(\ref{eq3A}) we
obtain that
\begin{equation}\label{eq4E}
|\gamma(1)-z^{(1)}|>\varepsilon^{(1)}\,.
\end{equation}
It follows from~(\ref{eq2B}) and (\ref{eq4E}) that $|\gamma|\cap
\overline{B(z^{(1)}, \varepsilon^{(1)})}\ne\varnothing\ne
(D\setminus \overline{B(z^{(1)}, \varepsilon^{(1)})})\cap|\gamma|.$
In this case, by Proposition~\ref{pr2} there is $\zeta_1\in (0, 1)$
such that~$\gamma(\zeta_1)\in S(z^{(1)}, \varepsilon^{(1)}).$ We may
assume that $\gamma(t)\not\in B(z^{(1)}, \varepsilon^{(1)})$ for
$t\in (\zeta_1, 1).$ Put $\alpha^1:=\gamma|_{[\zeta_1, 1]}.$

\medskip
On the other hand, since $\varepsilon^{(1)}<\varepsilon^{(2)}$ and
$\gamma(\zeta_1)\in S(z^{(1)}, \varepsilon^{(1)}),$ we obtain that
$|\alpha^1|\cap B(z^{(1)}, \varepsilon^{(2)})\ne\varnothing.$
By~(\ref{eq3A}) we obtain that $(D\setminus B(z^{(1)},
\varepsilon^{(2)}))\cap |\alpha^1|\ne\varnothing.$ Thus, again by
Proposition~\ref{pr2} there is $\zeta_2\in (\zeta_1, 1)$ such that
$\alpha^1(\zeta_2)\in S(z^{(1)}, \varepsilon^{(2)}).$ We may assume
that $\gamma(t)\in B(z^{(1)}, \varepsilon^{(2)})$ for $t\in
(\zeta_1, \zeta_2).$ Set $\alpha^2:=\alpha^1|_{[\zeta_1, \zeta_2]}.$
Now, $\gamma>\alpha^2$ and $\alpha^2\in \Gamma(S(z^{(1)},
\varepsilon^{(1)}), S(z^{(1)}, \varepsilon^{(2)}), A).$ Thus,
(\ref{eq1A}) is proved.

\medskip
By the inequalities~(\ref{eq1AAA}), (\ref{eq2}) and (\ref{eq1A}), by
Proposition~\ref{pr3} and by the definition of the class
$\frak{F}^{A, \delta}_{K, Q}(D),$ we obtain that
$$
C\cdot\log \frac{\delta}{2|f(x)-f(y)|}\leqslant 2\cdot
M(f(\Gamma(S(z^{(1)}, \varepsilon^{(1)}), S(z^{(1)},
\varepsilon^{(2)}), A(z^{(1)}, \varepsilon^{(1)},
\varepsilon^{(2)}))))\,.
$$
The latter relation completes the proof of Lemma~\ref{lem1}.~$\Box$
\end{proof}

{\it Proof of Theorem~\ref{th1}.} The proof of Theorem~\ref{th1}
uses some approaches applied in~\cite{SevSkv$_2$} for some another
class of mappings, cf.~\cite{IRS}. Firstly we prove that
$\frak{F}^{\delta}_{K, Q}(D)$ is equi-uniform on $K.$ (From this, in
particular, it will follow that $\frak{F}^{\delta}_{K, Q}(D)$
uniformly light).

\medskip
Let us to prove Theorem~\ref{th1} by the contradiction, i.e., assume
that the statement of this theorem is false. Then there exists
$\varepsilon_0>0$ such that, for any $m\in {\Bbb N}$ there is a
continuum $x_m, y_m\in K$ and $f_m\in \frak{F}^{A, \delta}_{K,
Q}(D)$ such that $h(x_m, y_m)\geqslant \varepsilon_0,$ however,
$h(f_m(x_m), f_m(y_m))<1/m.$ Now, $|x_m-y_m|\geqslant \varepsilon_0$
for any $m\in {\Bbb N},$ as well. In addition, since $h(f_m(x_m),
f_m(y_m))<1/m$ for $m=1,2,\ldots,$ we may consider that $f_m(x_m),
f_m(y_m)\rightarrow \omega_*$ as $m\rightarrow \infty$ for some
$\omega_*\in\overline{{\Bbb R}^n}.$ Using the inversion
$\psi(x)=\frac{x}{|x|^2}$ and considering the mappings
$\widetilde{f}_m:=\psi\circ f_m,$ if required, we may consider that
$\omega_*\ne \infty$ and $f_m(x_m)\ne\infty\ne f_m(y_m)$ for any
$m\in {\Bbb N}.$ Now, since the conformal modulus $M$ is an
invariant under conformal mapping $\psi$ (see e.g.
\cite[Theorem~8.1]{Va}), $\widetilde{f}_m$
satisfy~(\ref{eq2*!A})--(\ref{eq8BC}) as well, while the condition
$h(\widetilde{f}_m(K),
\partial \widetilde{f}_m(D))\geqslant \widetilde{\delta}$ holds with
some another $\widetilde{\delta}>0.$

\medskip
Applying Lemma~\ref{lem1}, we set in~(\ref{eq14***}) $x:=x_m,$
$y:=y_m$ and $f:=f_m.$ Now we obtain that
$$C\cdot\log \frac{\delta}{2|f_m(x_m)-f_m(y_m)|}\leqslant$$
\begin{equation}\label{eq1BA}
\leqslant 2 \cdot M(f(\Gamma(S(z^{(1, m)}, \varepsilon^{(1, m)}),
S(z^{(1, m)}, \varepsilon^{(2, m)}), A(z^{(1, m)}, \varepsilon^{(1,
m)}, \varepsilon^{(2, m)}))))\,,
\end{equation}
where $z^{(1, m)}:=f^{\,-1}(p_m),$ $\varepsilon^{(1, m)}:=|z^{(1,
m)}-x_m|$ and $\varepsilon^{(2, m)}:=|z^{(1, m)}-y_m|,$ $p_m\in
f_m(D).$ Since $D$ is bounded, we may consider that $x_m\rightarrow
x_0\in \overline{D},$ $y_m\rightarrow y_0\in \overline{D}$ and
$z^{(1, m)}\rightarrow z_*$ as $m\rightarrow\infty.$ Now,
$\varepsilon^{(1, m)}\rightarrow \varepsilon_1$ and
$\varepsilon^{(2, m)}\rightarrow \varepsilon_2$ as
$m\rightarrow\infty$ for some $\varepsilon_1, \varepsilon_2>0.$

\medskip
Fix
\begin{equation}\label{eq7A}
0<\varepsilon<\varepsilon_0/2\,.
\end{equation}
Let $x^{\,*}\in D$ be such that $|x^{\,*}-z_*|<\varepsilon/3.$
Additionally, let $m_0\in{\Bbb N}$ be such that
\begin{equation}\label{eq8A}
|z^{(1, m)}-z_*|<\varepsilon/3\,,\quad
\varepsilon_1-\varepsilon/3<\varepsilon^{(1,
m)}<\varepsilon_1+\varepsilon/3\quad\forall\,\, m>m_0\,.
\end{equation}

Let $x\in B(z^{(1, m)}, \varepsilon^{(1, m)}).$ Now, by~(\ref{eq8A})
and the triangle inequality we obtain that
$$|x-x^{\,*}|\leqslant |x-z^{(1, m)}|+|z^{(1, m)}-z_*|+
|z_*-x^{\,*}|$$
\begin{equation}\label{eq5}
< \varepsilon^{(1, m)}+\varepsilon/3+\varepsilon/3<
\varepsilon_1+\varepsilon\,,\quad m>m_0\,.
\end{equation}
It follows from~(\ref{eq5}) that
\begin{equation}\label{eq6A}
B(z^{(1, m)}, \varepsilon^{(1, m)})\subset B(x^{\,*},
\varepsilon_1+\varepsilon)\,, \quad m>m_0\,.
\end{equation}

Let $R_0$ be such that
\begin{equation}\label{eq6B}
\varepsilon_1+\varepsilon<R_0<\varepsilon_1+\varepsilon_0-\varepsilon\,.
\end{equation}
Note that the choice of the number $R_0$ in the formula~(\ref{eq6B})
is possible by the definition of the number $\varepsilon$
in~(\ref{eq7A}). Let $y\in B(x^{\,*}, R_0).$ Again, by the triangle
inequality and relations~(\ref{eq8A}), (\ref{eq6B}) we obtain that
$$|y-z^{(1, m)}|\leqslant
|y-x^{\,*}|+|x^{\,*}-z_*|+|z_*-z^{(1, m)}|$$
\begin{equation}\label{eq5a}
<R_0+2\varepsilon/3<\varepsilon_1+\varepsilon_0-\varepsilon+2\varepsilon/3=
\varepsilon_1+\varepsilon_0-\varepsilon/3<\varepsilon^{(1,
m)}+\varepsilon_0\leqslant \varepsilon^{(2, m)}\,.
\end{equation}

It follows from~(\ref{eq5a}) that
\begin{equation}\label{eq9A}
B(x^{\,*}, R_0)\subset B(z^{(1, m)},  \varepsilon^{(2, m)})\,,\quad
m>m_0\,.
\end{equation}
Recall that, a family of paths $\Gamma_1$ in ${\Bbb R}^n$ is said to
be {\it minorized} by a family of paths $\Gamma_2$ in ${\Bbb R}^n,$
abbr. $\Gamma_1>\Gamma_2,$ if, for every path $\gamma_1\in\Gamma_1$,
there is a path $\gamma_2\in\Gamma_2$ such that $\gamma_2$ is a
restriction of $\gamma_1.$ In this case,
\begin{equation}\label{eq32*A}
\Gamma_1
> \Gamma_2 \quad \Rightarrow \quad M(\Gamma_1)\leqslant M(\Gamma_2)
\end{equation} (see~\cite[Theorem~1]{Fu}).
Taking into account relations~(\ref{eq1BA}), (\ref{eq9A})
and~(\ref{eq32*A}) as well as Propositions~\ref{pr2} and~\ref{pr3},
we obtain that
$$C\cdot\log \frac{\delta}{2|f_m(x_m)-f_m(y_m)|}\leqslant$$
\begin{equation}\label{eq10}
\leqslant 2\cdot M(f(\Gamma(S(z^{(1, m)}, \varepsilon^{(1, m)}),
S(z^{(1, m)}, \varepsilon^{(2, m)}), A(z^{(1, m)}, \varepsilon^{(1,
m)}, \varepsilon^{(2, m)}))))\leqslant
\end{equation}
$$\leqslant 2\cdot (f(\Gamma(S(x^{\,*}, \varepsilon_1+\varepsilon,
S(x^{\,*}, R_0), A(x^{\,*}, \varepsilon_1+\varepsilon,
R_0))))\leqslant \frac{2\omega_{n-1}}{I^{n-1}}<\infty\,,\quad m>
m_0\,,$$
where
$$I=I(x^{\,*}, \varepsilon_1+\varepsilon,
R_0)=\int\limits_{\varepsilon_1+\varepsilon}^{R_0}
\frac{dr}{rq_{x^{\,*}}^{\frac{1}{n-1}}(r)}$$
and $I<\infty$ because, by the assumptions of the theorem, $Q$ is
integrable on $S(x^{\,*}, r)$ for $r\in E_1$ for some set $E_1$ of
positive linear Lebesgue measure in $[\varepsilon_1+\varepsilon,
R_0].$ It follows from~(\ref{eq10}) that
\begin{equation}\label{eq2D}
|f_m(x_m)-f_m(y_m)|\geqslant
\frac{\delta}{2}\exp\left\{-\frac{2\omega_{n-1}}{CI^{n-1}}\right\}=const>0\,,\quad
m>m_0\,.
\end{equation}
The relation~(\ref{eq2D}) contradicts the assumption that
$|f_m(x_m)-f_m(y_m)|<1/m,$ $m=1,2,\ldots.$ The obtained
contradiction proves the theorem in the case when $Q$ is integrable
on family of spheres $S(x_0, r),$ $r\in E_1.$

\medskip
It remains to consider the case $Q\in L^1(D).$ Indeed, in the
notions of Lemma~\ref{lem1}, by this Lemma, we obtain that
\begin{equation}\label{eq15}
C\cdot\log \frac{\delta}{2|f(x)-f(y)|}\leqslant 2\cdot
M(f(\Gamma(S(z^{(1)}, \varepsilon^{(1)}), S(z^{(1)},
\varepsilon^{(2)}), A(z^{(1)}, \varepsilon^{(1)},
\varepsilon^{(2)}))))
\end{equation}
for every $x, y \in K,$ $x\ne y,$ and every
$f\in\frak{F}^{\delta}_{K, Q}(D),$ where $C$ is a constant
in~(\ref{eq3C}), $\varepsilon^{(1)}:=|z^{(1)}-x|$ and
$\varepsilon^{(2)}:=|z^{(1)}-y|.$

Now, we set
$$\eta(t)= \left\{
\begin{array}{rr}
\frac{1}{|x-y|}, & t\in [\varepsilon^{(1)}, \varepsilon^{(2)}],\\
0,  &  t\not\in [\varepsilon^{(1)}, \varepsilon^{(2)}]\,.
\end{array}
\right. $$
Observe that, $\eta$ satisfies the relation~(\ref{eq8BC}) for
$r_1=\varepsilon^{(1)}$ and $r_2=\varepsilon^{(2)}.$ Indeed, it
follows from~(\ref{eq5B}) that
$$r_1-r_2=\varepsilon^{(2)}-\varepsilon^{(1)}=|y-z^{(1)}|-|x-
z^{(1)}|=$$$$=|x-y|\,.$$
Then
$\int\limits_{\varepsilon^{(1)}}^{\varepsilon^{(2)}}\eta(t)\,dt=(1/|x-y|)\cdot
(\varepsilon^{(2)}-\varepsilon^{(1)})= 1.$ By~(\ref{eq15}) and by
the definition of the class $\frak{F}^{\delta}_{K, Q}(D),$ we obtain
that
\begin{equation}\label{eq14***A}
C\cdot\log \frac{\delta}{2|f(x)-f(y)|}\leqslant
\frac{2}{|x-y|^n}\int\limits_{D} Q(w)\,dm(w)=\frac{\Vert
Q\Vert_1}{{|x-y|}^n}\,.
\end{equation}
It follows from~(\ref{eq14***A}) that the relation~(\ref{eq1D})
holds with $C_1=\frac{\delta}{2}$ whenever
$|f(x)-f(y)|<\frac{\delta_0\delta}{2},$ where $\delta_0$ is a number
from~(\ref{eq3C}). Let now $|f(x)-f(y)|\geqslant
\frac{\delta_0\delta}{2}.$ Then
$$\exp\left\{-\frac{2\Vert Q\Vert_1}{C|x-y|^{n}}\right\}\leqslant
\exp\left\{-\frac{2\Vert Q\Vert_1}{C(d(K))^{n}}\right\}=
\exp\left\{-\frac{2\Vert Q\Vert_1}{C(d(K))^{n}}\right\}\cdot
\frac{\delta_0\delta}{2}\cdot \frac{2}{\delta_0\delta}\leqslant$$
$$\leqslant \exp\left\{-\frac{2\Vert Q\Vert_1}{C(d(K))^{n}}\right\}
\cdot \frac{2}{\delta_0\delta}|f(x)-f(y)|\,,$$
or, equivalently,
$$|f(x)-f(y)|\geqslant \frac{\delta_0\delta}{2}
\cdot\exp\left\{2\frac{\Vert
Q\Vert_1}{C(d(K))^{n}}\right\}\exp\left\{-\frac{2\Vert
Q\Vert_1}{C|x-y|^{n}}\right\}\,.$$
Finally, the relation~(\ref{eq1D}) holds for
$C_1:=\min\left\{\frac{\delta}{2},  \frac{\delta_0\delta}{2}
\cdot\exp\left\{\frac{2\Vert
Q\Vert_1}{C(d(K))^{n}}\right\}\right\}.$~$\Box$

\medskip
{\it Proof of Corollary~\ref{cor1}.} Let $0<r_0:=\sup\limits_{y\in
D}|x-x_0|.$ We may assume that $Q$ is extended by zero outside $D.$
Let $Q\in L^1(D).$ By the Fubini theorem (see, e.g.,
\cite[Theorem~8.1.III]{Sa}) we obtain that
$$\int\limits_{r_1<|x-x_0|<r_2}Q(x)\,dm(x)=\int\limits_{r_1}^{r_2}
\int\limits_{S(x_0, r)}Q(x)\,d\mathcal{H}^{n-1}(x)dr<\infty\,.$$
This means the fulfillment of the condition of the integrability of
the function $Q$ on the spheres with respect to any subset $E_1$ in
$[r_1, r_2].$~$\Box$}

\medskip
{\it Proof of Theorem~\ref{th2}.} Assume the contrary, i.e., the
family $\frak{F}^{\delta}_{K, Q}(D)$ is not uniformly open on $K.$
Then there exists $\varepsilon_0>0$ such that for any $m\in {\Bbb
N}$ there exists $x_m\in K$ and $f_m\in \frak{F}^{\delta}_{K, Q}(D)$
such that $B(x_m, \varepsilon_0)\subset K$ and $B_h(f_m(x_m),
1/m)\setminus f_m(B(x_m, \varepsilon_0))\ne\varnothing.$ Let $y_m\in
B_h(f_m(x_m), 1/m)\setminus f_m(B(x_m, \varepsilon_0)).$ We may
consider that $f_m(x_m)$ and $y_m$ converge to some point
$\omega_{*}$ as $m\rightarrow \infty.$ We may consider
$\omega_{*}\ne\infty,$ otherwise we consider
$\widetilde{f}_m:=\psi\circ f_m,$ $\psi(x)=\frac{x}{|x|^2},$ instead
$f_m$ follow.

\medskip
Now, $f_m(x_m)\ne\infty\ne y_m$ for sufficiently large $m\in {\Bbb
N}.$ Join the points $f_m(x_m)$ and $y_m$ by the segment
$r_m(t)=f_m(x_m)+t(y_m-f_m(x_m)),$ $t\in [0, 1].$ Since $|r_m|\cap
f_m(B(x_m, \varepsilon_0))\ne\varnothing\ne |r_m|\setminus
f_m(B(x_m, \varepsilon_0)),$ by Proposition~\ref{pr2} there is a
point $z_m=r_m(t_m)\in \partial f_m(B(x_m, \varepsilon_0).$ Without
loss of generality, we may assume that the path $\beta_m:=r_m|_{[0,
t_m)}$ lies in $f_m(B(x_m, \varepsilon_0)).$ Now, set
$\alpha_m=f_m^{\,-1}(\beta_m).$ Since homeomorphisms preserve the
boundary, $\alpha_m\rightarrow S(x_m, \varepsilon_0)$ (see
Proposition~13.5 in \cite{MRSY}). Now, there is a point $z_m\in
|\alpha_m|$ with $|z_m-x_m|>\varepsilon_0/2,$ so we may chose a
subcontinuum $E_m\subset|\alpha_m|$ with $d(E_m)\geqslant
\varepsilon_0/2.$ Now, since $E_m$ belongs to a bounded domain $D,$
we obtain that $h(E_m)\geqslant \varepsilon^{\,*}>0$ for any $m\in
{\Bbb N}$ and some $\varepsilon^{\,*}>0,$ as well. On the other
hand, $f_m(E_m)\subset |\beta_m|$ and
$d(\beta_m)=|f_m(x_m)-y_m|<1/m,$ $m\in {\Bbb N}.$ Since $f_m(x_m)$
and $y_m$ converge to some point $\omega_{*}$ as $m\rightarrow
\infty$ and $\omega_{*}\ne\infty,$ and follows that
$h(\beta_m)\rightarrow 0$ as $m\rightarrow\infty,$ as well.

\medskip
Finally, $h(E_m)\geqslant \varepsilon_0/2,$ $E_m\subset K$ for any
$m=1,2,\ldots $ and $h(f_m(E_m))\rightarrow 0$ as
$m\rightarrow\infty$ for $f_m\in\frak{F}^{\delta}_{K, Q}(D).$ The
latter contradicts the statement of Theorem~\ref{th1}. The obtained
contradiction proves the theorem.~$\Box$

\medskip
{\it Proof of Corollary~\ref{cor2}} is similar ro the proof of
Corollary~\ref{cor1}.~$\Box$

\medskip
\centerline{\bf 2. Lemmas on continua approaching the boundary.}

\medskip
The following statement holds.

\medskip
\begin{lemma}\label{lem3} {\bf(V\"{a}is\"{a}l\"{a}'s lemma on the weak flatness of inner points).}
{\sl\, Let $n\geqslant 2 $, let $D$ be a domain in $\overline{{\Bbb
R}^n},$ and let $x_0\in D.$ Then for each $P>0$ and each
neighborhood $U$ of point $x_0$ there is a neighborhood $V\subset U$
of the same point such that $M(\Gamma(E, F, D))> P$ for any continua
$E, F \subset D $ intersecting $\partial U$ and $\partial V.$}
\end{lemma}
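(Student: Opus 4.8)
The plan is to reduce the assertion to a known lower bound for the modulus of curve families in a round ring, reached through a dyadic decomposition. Since the conformal modulus is a M\"obius invariant, I may assume $x_0\ne\infty$ (if $x_0=\infty$, first transfer everything by the inversion $x\mapsto x/|x|^2$, which preserves the modulus and all hypotheses). As $x_0$ is an interior point of the open set $U\cap D$, fix $a>0$ with $\overline{B(x_0,a)}\subset U\cap D$; in particular $\partial U\cap B(x_0,a)=\varnothing$, so $\partial U\subset\overline{{\Bbb R}^n}\setminus B(x_0,a)$. For a parameter $r\in(0,a)$ to be chosen later, set $V:=B(x_0,r)$, so that $V\subset U$ and $\partial V=S(x_0,r)$. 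Let $E,F\subset D$ be continua, each meeting $\partial U$ and $\partial V$. Since $E$ meets $S(x_0,r)$ and meets $\overline{{\Bbb R}^n}\setminus B(x_0,a)\supset\partial U$, Proposition~\ref{pr2} shows that $E$ meets $S(x_0,\rho)$ for every $\rho\in[r,a]$; a standard connectedness (boundary bumping) argument then yields a subcontinuum $E^{*}\subset E\cap\overline{A(x_0,r,a)}$ meeting both $S(x_0,r)$ and $S(x_0,a)$, and likewise $F^{*}\subset F$. Because $A(x_0,r,a)\subset B(x_0,a)\subset D$, every curve joining $E^{*}$ to $F^{*}$ inside $A(x_0,r,a)$ joins $E$ to $F$ inside $D$; therefore
$$M(\Gamma(E,F,D))\geqslant M(\Gamma(E^{*},F^{*},A(x_0,r,a)))\,.$$

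Next I decompose the ring dyadically. Let $N$ be the largest integer with $r2^{N}\leqslant a$, so that $N\geqslant\log_2(a/r)-1$, and put $A_k:=A(x_0,r2^{k},r2^{k+1})$ for $k=0,\dots,N-1$; these open rings are pairwise disjoint and contained in $A(x_0,r,a)$. Since $E^{*}$ (resp.\ $F^{*}$) is connected and meets both $S(x_0,r)$ and $S(x_0,a)$, the same argument as above gives, for each $k$, a subcontinuum $E^{*}_k\subset E^{*}\cap\overline{A_k}$ (resp.\ $F^{*}_k$) meeting $S(x_0,r2^{k})$ and $S(x_0,r2^{k+1})$. Every curve of $\Gamma_k:=\Gamma(E^{*}_k,F^{*}_k,A_k)$ lies in $\Gamma(E^{*},F^{*},A(x_0,r,a))$; hence, if $\rho$ is admissible for the latter family, then $\rho\cdot\chi_{A_k}$ is admissible for $\Gamma_k$, and using the disjointness of the $A_k$ one gets $\int_{A(x_0,r,a)}\rho^{n}\,dm\geqslant\sum_k\int_{A_k}\rho^{n}\,dm\geqslant\sum_k M(\Gamma_k)$. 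Taking the infimum over $\rho$ yields the additivity bound
$$M(\Gamma(E^{*},F^{*},A(x_0,r,a)))\geqslant\sum_{k=0}^{N-1}M(\Gamma_k)\,.$$

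The decisive point is a \emph{uniform} lower bound for each summand: there is a constant $c_n>0$, depending only on $n$, such that $M(\Gamma(E',F',A(x_0,\varrho,2\varrho)))\geqslant c_n$ for every $\varrho>0$ and all continua $E',F'\subset\overline{A(x_0,\varrho,2\varrho)}$ meeting $S(x_0,\varrho)$ and $S(x_0,2\varrho)$. After a translation and the scaling $x\mapsto x/\varrho$ this reduces to the single ring $A(0,1,2)$; the continua then have diameter at least $1$ and lie in $\overline{B(0,2)}$, so $\Delta(E',F')\leqslant 4$, and positivity follows from the Loewner property of ${\Bbb R}^n$ combined with a reflection/spherical symmetrization argument that confines the competing curves to the ring (the extremal configuration being two antipodal radial segments, whose joining family has a positive Gr\"otzsch-type modulus; if $E'\cap F'\ne\varnothing$ the modulus is only larger). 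I expect this confinement step --- passing from the Loewner estimate for curves in all of ${\Bbb R}^n$ to curves that are required to stay inside the ring --- to be the main obstacle, and it is precisely here that symmetrization (equivalently, the classical round-ring modulus estimate) is invoked; this is the one ingredient not furnished directly by the earlier statements of the paper.

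Combining the three displays gives
$$M(\Gamma(E,F,D))\geqslant N\,c_n\geqslant c_n\bigl(\log_2(a/r)-1\bigr)\,,$$
and the right-hand side exceeds $P$ as soon as $r$ is chosen small enough, e.g.\ $r<a\cdot 2^{-(P/c_n)-2}$. With this choice, $V:=B(x_0,r)$ is the required neighborhood, and the lemma follows.
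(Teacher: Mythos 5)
The paper itself supplies no proof of this lemma---it simply cites V\"ais\"al\"a \cite[(10.11)]{Va} and a formal write-up in \cite[Lemma~2.2]{SevSkv$_1$}---so your attempt is the only actual argument on the table. Your route (choose $a$ with $\overline{B(x_0,a)}\subset U\cap D$, take $V=B(x_0,r)$, extract subcontinua crossing the annulus via boundary bumping, decompose dyadically into rings $A_k$, and sum the lower bounds $M(\Gamma_k)\geqslant c_n$ using disjointness of the $A_k$) is precisely the classical V\"ais\"al\"a--Gehring argument, so structurally you have reconstructed the cited proof. The monotonicity step, the superadditivity via $\rho\chi_{A_k}$, and the counting of dyadic rings ($N\geqslant\log_2(a/r)-1$) are all fine.

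The genuine gap is exactly where you flag it: the uniform bound $M\big(\Gamma(E',F',A(x_0,\varrho,2\varrho))\big)\geqslant c_n>0$ for continua joining the two boundary spheres of a $1{:}2$ ring. None of the paper's internal tools delivers this. The Loewner inequality (Heinonen, Thm.~8.23) only controls the modulus of the curve family in all of ${\Bbb R}^n$, and Proposition~\ref{pr1} (Vuorinen) only halves the modulus when passing to a closed \emph{ball} or \emph{half-space}; there is no analogous one-shot reflection for an annulus, since $\partial A(\varrho,2\varrho)$ has two components and the two reflections do not generate a finite group. Confining the curves to the ring therefore really does require the classical round-ring modulus estimate (V\"ais\"al\"a's Theorem~10.12, or a spherical-cap/symmetrization argument), and that ingredient is neither stated nor proved here. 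Until it is supplied or properly cited, the chain $M(\Gamma(E,F,D))\geqslant Nc_n$ does not close; everything before it is correct, and the rest would follow immediately once this lemma is in place.
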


\medskip
The proof of Lemma~\ref{lem3} is essentially given by
V\"{a}is\"{a}l\"{a} in~\cite[(10.11)]{Va}, however, we have also
given a formal proof, see \cite[Lemma~2.2]{SevSkv$_1$}.~$\Box$

\medskip
Several of our publications contain the assertion that for
homeomorphisms satisfying conditions~(\ref{eq2*!A})--(\ref{eq8BC}),
the image of the continuum cannot approach the boundary of the
mapped domain if this boundary is, say, weakly flat (see, for
example, \cite[Lemma~4.1]{SevSkv$_1$}). Now we will establish that
this fact is satisfied in a much more general situation, namely, no
geometry is required on the boundary of the mapped domain, and the
domain can even change under the mapping. The following assertion is
true.

\medskip
\begin{lemma}\label{lem2}{\it\, Let $D$ be a domain in ${\Bbb R}^n,$ $n\geqslant 2,$ let $A$
be a non-degenerate continuum in $D$ and let $f_m:D\rightarrow
\overline{{\Bbb R}^n}$ be a sequence of homeomorphisms in
$\frak{A}^{\delta, r}_{A, Q}(D)$ for some function $Q,$ continuum
$A\subset D$ and numbers $\delta, r>0.$ Assume that, for each point
$x_0\in D$ and for every $0<r_1<r_2<r_0:=\sup\limits_{x\in
D}|x-x_0|$ there is a set $E_1\subset[r_1, r_2]$ of a positive
linear Lebesgue measure such that the function $Q$ is integrable
with respect to $\mathcal{H}^{n-1}$ over the spheres $S(x_0, r)$ for
every $r\in E_1.$ Then there exists $\delta_1>0$ such that
$h(f_m(A),
\partial f_m(D))\geqslant \delta_1$ for any $m=1,2,\ldots .$ }
\end{lemma}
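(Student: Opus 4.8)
The plan is to argue by contradiction. Assume that there is no such $\delta_1>0$; then, passing to a subsequence (which we do not relabel), we may assume that $h(f_m(A),\partial f_m(D))\to 0$ as $m\to\infty$. The key point is that $h(f_m(A),\partial f_m(D))$ is realized (or approximated) by pairs of points $u_m\in f_m(A)$ and $v_m\in\partial f_m(D)$ with $h(u_m,v_m)\to 0$. First I would pick points $a_m\in A$ with $f_m(a_m)=u_m$; since $A$ is compact, after a further subsequence $a_m\to a_0\in A\subset D$, and since $A$ is non-degenerate we may fix a second point $b_0\in A$, $b_0\neq a_0$, and join $a_0,b_0$ inside $A$; thus $h(f_m(A))\geqslant \delta$ forces the chordal image of $A$ to stay large while one of its points approaches $\partial f_m(D)$. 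As in the proof of Theorem~\ref{th2}, using the inversion $\psi(x)=x/|x|^2$ if necessary, we may assume that all the relevant images lie in a bounded part of ${\Bbb R}^n$ and that $\infty$ is not involved, so that chordal and Euclidean smallness are equivalent on the sets under consideration.

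The heart of the argument is a modulus estimate in the preimage. Fix a small ball $B(a_0,\varepsilon_0)$ with $\overline{B(a_0,\varepsilon_0)}\subset D$ and $\varepsilon_0$ small enough that $b_0\notin B(a_0,2\varepsilon_0)$; then for large $m$ we have $a_m\in B(a_0,\varepsilon_0)$. Consider the point $v_m\in\partial f_m(D)$; since $v_m$ is a boundary point and since homeomorphisms carry the boundary to the boundary (Proposition~13.5 in \cite{MRSY}), there is a half-open path $\beta_m$ in $f_m(D)$ ending at $v_m$ whose $f_m$-preimage $\alpha_m=f_m^{\,-1}(\beta_m)$ tends to $\partial D$; in particular $|\alpha_m|$ eventually leaves $B(a_0,\varepsilon_0)$, so we may extract a subcontinuum $F_m\subset|\alpha_m|$ exiting $\overline{B(a_0,\varepsilon_0)}$, i.e.\ with $F_m\cap S(a_0,\varepsilon_0)\neq\varnothing$ by Proposition~\ref{pr2} and $d(F_m)\geqslant\varepsilon_0/2$ (here one uses $h(E)\geqslant r$ for components of $\partial f_m(D)$ together with boundedness of $D$ to keep $F_m$ from collapsing). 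On the image side $f_m(F_m)\subset|\beta_m|$ and $d(f_m(F_m))\leqslant h$-distance bound $\to 0$. Simultaneously, the continuum $E:=A$ (or a fixed subcontinuum of it through $a_0,b_0$) has $f_m(E)=f_m(A)$ with chordal diameter $\geqslant\delta$. Thus in $f_m(D)$ we have two disjoint continua $f_m(E)$, $f_m(F_m)$ with $f_m(E)$ of diameter bounded below and $f_m(F_m)$ of diameter shrinking to $0$, while the distance between them is at most the distance from $u_m$ to $\beta_m$, which also shrinks. Applying V\"ais\"al\"a's Lemma~\ref{lem3} on weak flatness of inner points at the point $u_0:=\lim u_m$ (or rather at a fixed inner point of $f_m(D)$ near $u_m$, using that $u_m=f_m(a_m)$ stays in a fixed compact part of ${\Bbb R}^n$) gives $M(\Gamma(f_m(E),f_m(F_m),f_m(D)))\to\infty$.

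On the other hand, $\Gamma(f_m(E),f_m(F_m),f_m(D))=f_m(\Gamma(E,F_m,D))$, and exactly as in Lemma~\ref{lem1} one minorizes $\Gamma(E,F_m,D)$ by a spherical ring family $\Gamma(S(a_0,\varepsilon_0/2),S(a_0,\varepsilon_0),A(a_0,\varepsilon_0/2,\varepsilon_0))$: every path in $D$ joining $E\ni a_0$-neighbourhood to $F_m$ must cross both spheres $S(a_0,\varepsilon_0/2)$ and $S(a_0,\varepsilon_0)$ (one uses that $E=A$ meets the small ball while $F_m$ exits the larger one), so by~(\ref{eq32*A}) and Proposition~\ref{pr3} together with the definition of $\frak{A}^{\delta,r}_{A,Q}(D)$,
$$
M(f_m(\Gamma(E,F_m,D)))\leqslant \frac{\omega_{n-1}}{I^{\,n-1}},\qquad
I=\int\limits_{\varepsilon_0/2}^{\varepsilon_0}\frac{dr}{rq_{a_0}^{1/(n-1)}(r)},
$$
and $I>0$ is finite because, by hypothesis, $Q$ is integrable over $S(a_0,r)$ for $r$ in a set of positive measure in $[\varepsilon_0/2,\varepsilon_0]$. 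This uniform upper bound contradicts $M(\Gamma(f_m(E),f_m(F_m),f_m(D)))\to\infty$, and the lemma follows.

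The main obstacle is the geometric bookkeeping on the image side: one must make sure that, after the inversion reduction, the point $u_m=f_m(a_m)$ really stays in a fixed compact subset of ${\Bbb R}^n$ so that Lemma~\ref{lem3} can be applied at a genuine interior point of $f_m(D)$ with a neighbourhood independent of $m$, and that the shrinking of $h(u_m,v_m)$ translates into shrinking Euclidean distance between $f_m(E)$ and $f_m(F_m)$; the condition $h(E)\geqslant r$ on the components of $\partial f_m(D)$ is what prevents $\partial f_m(D)$ from degenerating near $u_m$ and is used precisely to guarantee that $F_m$ does not collapse and that Lemma~\ref{lem3}'s hypotheses (two continua meeting $\partial U$ and $\partial V$) are met. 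Everything else is a routine adaptation of the modulus machinery already developed for Lemma~\ref{lem1} and Theorem~\ref{th2}.
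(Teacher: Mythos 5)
Your global strategy (argue by contradiction, get a large modulus from V\"ais\"al\"a's weak-flatness lemma, get a small modulus from ring estimates via Proposition~\ref{pr3}) is the same as the paper's, but the lower-bound step has a genuine gap and the choice of the second continuum is the crux of it.

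You build a continuum $F_m\subset D$ by pulling back a short path $\beta_m$ from $u_m$ to $v_m$, note that $F_m$ has diameter $\geqslant\varepsilon_0/2$ in $D$, and observe that $f_m(F_m)\subset|\beta_m|$ has $d(f_m(F_m))\to 0$. You then claim that applying Lemma~\ref{lem3} at $u_0=\lim u_m$ yields $M(\Gamma(f_m(A),f_m(F_m),f_m(D)))\to\infty$. That application cannot be made: Lemma~\ref{lem3} produces a large modulus only when \emph{both} continua intersect $\partial U$ and $\partial V$ for a \emph{fixed} pair of neighborhoods $V\subset U$; since $f_m(F_m)$ has chordal diameter tending to $0$, for any fixed $U$ it eventually misses $\partial U$, and the lemma gives nothing. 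The Loewner estimate does not rescue the argument either, because $\Delta(f_m(A),f_m(F_m))={\rm dist}(f_m(A),f_m(F_m))/\min\{d(f_m(A)),d(f_m(F_m))\}$ has $d(f_m(F_m))\to 0$ in the denominator and therefore need not tend to $0$. You assert that the hypothesis $h(E)\geqslant r$ for components $E$ of $\partial f_m(D)$ "prevents $F_m$ from collapsing," but that hypothesis constrains boundary components and says nothing about the image of a sub-arc of $\alpha_m$; $f_m(F_m)$ is not a boundary component. The role of the condition $h(E)\geqslant r$ is different, and it is precisely the missing ingredient: the correct second continuum is the whole component $E_m$ of $\partial f_m(D)$ containing the nearest boundary point $y_m$. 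By membership in $\frak{A}^{\delta,r}_{A,Q}(D)$ one has $h(E_m)\geqslant r$, and together with $h(f_m(A))\geqslant\delta$ this ensures that \emph{both} $E_m$ and $f_m(A)$ cross $\partial U$ and $\partial V$ for small enough neighborhoods of the common limit point $y_0$; Lemma~\ref{lem3} then really does give $M(\Gamma(f_m(A),E_m,\overline{{\Bbb R}^n}))>P$ for every $P$ once $m$ is large.

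A secondary problem would remain even if the lower bound were fixed: your upper bound minorizes by a single spherical ring centered at $a_0$, but a path of $\Gamma(A,F_m,D)$ can start at a point of $A$ already inside $B(a_0,\varepsilon_0/2)$ and end in $F_m$ in the annulus without ever crossing the inner sphere, so the single ring does not minorize the whole family. In the paper this is handled by first passing to the subfamily of paths from $A$ to $\partial D$ that stay in $D$, then covering $A$ by finitely many balls $B(x_i,\varepsilon/4)$ with $\varepsilon={\rm dist}(A,\partial D)$, splitting the family accordingly, minorizing each piece by a ring $\Gamma(S(x_i,\varepsilon/4),S(x_i,\varepsilon/2),A(x_i,\varepsilon/4,\varepsilon/2))$, and summing the finitely many Proposition~\ref{pr3} bounds. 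Each ring estimate is finite because $Q$ is integrable on a positive-measure set of spheres in $[\varepsilon/4,\varepsilon/2]$, and the sum has a fixed number of terms independent of $m$.
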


\medskip
\begin{proof}
Since $D$ is a domain in ${\Bbb R}^n,$ $\partial D\ne\varnothing.$
In addition, since $C(\partial D, f_m)\subset \partial f_m(D)$
whenever $f_m$ is a homeomorphism (see Proposition~13.5 in
\cite{MRSY}), $\partial f_m(D)\ne \varnothing,$ as well. Thus, the
quantity $h(f_m(A),
\partial f_m(D))$ is well-defined, so the formulation of the lemma is
correct.

\medskip
Let us prove Lemma~\ref{lem2} by the contradiction, partially using
the approach of the proof of Lemma~4.1 in \cite{SevSkv$_1$}. Suppose
that the conclusion of the lemma is not true. Then for each $k\in
{\Bbb N}$ there is some number $m=m_k$ such that $h(f_{m_k}(A),
\partial f_{m_k}(D))<1/k.$ In order not to complicate the notation, we will further assume that
$h(f_{m}(A),
\partial f_m(D))<1/m,$ $m=1,2,\ldots. $ Note that
the set $f_{m}(A)$ is compact as a continuous image of a compact set
$A\subset D$ under the mapping~$f_{m}.$ In this case, there are
elements $x_m\in f_{m}(A)$ and $y_m\in \partial f_m(D)$ such that
$h(f_{m}(A),
\partial f_m(D))=h(x_m, y_m)<1/m.$ Let $E_m$ be a component of $\partial
f_m(D)$ consisting $y_m.$ Due to the compactness of $\overline{{\Bbb
R}^n},$ we may assume that $y_m\rightarrow y_0$ as $m\rightarrow
\infty;$ then also $x_m\rightarrow y_0$ as $m\rightarrow \infty.$

\medskip
Put $P>0$ and $U=B_h(y_0, r_0)=\{y\in \overline{{\Bbb R}^n}: h(y,
y_0)<r_0\},$ where $2r_0:=\min\{r/2, \delta/2\},$ where $r$ and
$\delta$ are numbers from the condition of the lemma. Observe that
$E_m\cap U\ne\varnothing\ne E_m\setminus U$ for sufficiently large
$m\in{\Bbb N},$ since $y_m\rightarrow y_0$ as $m\rightarrow \infty,$
$y_m\in E_m;$ besides that $h(E_m)\geqslant r>r/2\geqslant 2r_0$ and
$h(U)\leqslant 2r_0.$ Since $E_m$ is a continuum, $E_m\cap
\partial U\ne\varnothing$ by Proposition~\ref{pr2}. Similarly,
$f_{m}(A)\cap U\ne\varnothing\ne f_{m}(A)\setminus U$ for
sufficiently large $m\in{\Bbb N},$ since $x_m\rightarrow y_0$ as
$m\rightarrow \infty,$ $x_m\in f_{m}(A);$ besides that
$h(f_{m}(A))\geqslant \delta>\delta/2\geqslant 2r_0$ and
$h(U)\leqslant 2r_0.$ Since $f_m(A)$ is a continuum, $f_m(A)\cap
\partial U\ne\varnothing$ by Proposition~\ref{pr2}.
By the proving above,
\begin{equation}\label{eq8}
E_m\cap
\partial U\ne\varnothing\ne f_m(A)\cap
\partial U
\end{equation}
for sufficiently large $m\in {\Bbb N}.$ By Lemma~\ref{lem3} there is
$V\subset U,$ $V$ is a neighborhood of $y_0,$ such that
\begin{equation}\label{eq9}
M(\Gamma(E, F, \overline{{\Bbb R}^n}))>P
\end{equation}
for any continua $E, F\subset \overline{{\Bbb R}^n}$ with $E\cap
\partial U\ne\varnothing\ne E\cap \partial V$ and $F\cap \partial
U\ne\varnothing\ne F\cap \partial V.$
Arguing similarly to above, we may prove that
$$
E_m\cap
\partial V\ne\varnothing\ne f_m(A)\cap
\partial V
$$
for sufficiently large $m\in {\Bbb N}.$ Thus, by~(\ref{eq9})
\begin{equation}\label{eq9B}
M(\Gamma(f_m(A), E_m,\overline{{\Bbb R}^n}))>P
\end{equation}
for sufficiently large $m=1,2,\ldots .$ We now prove that the
relation~(\ref{eq9B}) contradicts the definition of $f_m$
in~(\ref{eq2*!A})--(\ref{eq8BC}).

\medskip
Indeed, let $\gamma:[0, 1]\rightarrow \overline{{\Bbb R}^n}$ be a
path in $\Gamma(f_m(A), E_m, \overline{{\Bbb R}^n}),$ i.e.,
$\gamma(0)\in f_m(A),$ $\gamma(1)\in E_m$ and $\gamma(t)\in
\overline{{\Bbb R}^n}$ for $t\in (0, 1).$ Let
$t_m=\sup\limits_{\gamma(t)\in f_m(D)}t$ and let
$\alpha_m(t)=\gamma|_{[0, t_m)}.$ Let $\Gamma_m$ consists of all
such paths $\Gamma_m,$ now $\Gamma(f_m(A), E_m, \overline{{\Bbb
R}^n})>\Gamma_m$ and by the minorization principle of the modulus
(see~\cite[Theorem~1]{Fu})
\begin{equation}\label{eq12}
M(\Gamma_m)\geqslant M(\Gamma(f_m(A), E_m, \overline{{\Bbb
R}^n}))\,.
\end{equation}
Let $\Delta_m:=f^{\,-1}_m(\Gamma_m).$ Now
\begin{equation}\label{eq13}
M(f_m(\Delta_m))=M(\Gamma_m)\,.
\end{equation}
Observe that, $\beta_m\in \Delta_m$ if and only if
$\beta_m(t)=f^{\,-1}_m(\alpha_m(t))=f^{\,-1}_m(\gamma|_{[0, t_m)})$
for some $\gamma\in \Gamma(f_m(A), E_m, \overline{{\Bbb R}^n})$ and
$t_m=\sup\limits_{\gamma(t)\in f_m(D)}t.$ Since homeomorphisms
preserve the boundary, $\beta_m(t)\rightarrow \partial D$ as
$t\rightarrow t_m.$

Let $\varepsilon:={\rm dist}\,(K, \partial D)$ (if $D$ has no finite
boundary points, we may choose any positive number as $\varepsilon,$
say $\varepsilon=1$). We cover the continuum $A$ with balls $B(x,
\varepsilon/4),$ $x\in A.$ Since $A$ is a compact set, we may assume
that $A\subset \bigcup\limits_{i=1}^{M_0}B(x_i, \varepsilon/4),$
$x_i\in A,$ $i=1,2,\ldots, M_0,$ $1\leqslant M_0<\infty.$ By the
definition, $M_0$ depends only on $A,$ in particular, $M_0$ does not
depend on $m.$ We set
Note that
\begin{equation}\label{eq6D}
\Delta_m=\bigcup\limits_{i=1}^{M_0}\Gamma_{mi}\,,
\end{equation}
where $\Gamma_{mi}$ consists of all paths $\gamma:[0, 1)\rightarrow
D$ in $\Delta_m$ such that $\gamma(0)\in B(x_i, \varepsilon/4)$ and
$\gamma(t)\rightarrow \partial D$ as $t\rightarrow 1-0.$ We now show
that
\begin{equation}\label{eq7C}
\Gamma_{mi}>\Gamma(S(x_i, \varepsilon/4), S(x_i, \varepsilon/2),
A(x_i, \varepsilon/4, \varepsilon/2))\,.
\end{equation}
Indeed, let $\gamma\in \Gamma_{mi},$ in other words, $\gamma:[0,
1)\rightarrow D,$ $\gamma\in \Delta_m,$ $\gamma(0)\in B(x_i,
\varepsilon/4)$ and $\gamma(t)\rightarrow \partial D$ as
$t\rightarrow 1-0.$ Now, by the definition of $\varepsilon,$
$|\gamma|\cap B(x_i, \varepsilon/4)\ne\varnothing\ne |\gamma|\cap
(D\setminus B(x_i, \varepsilon/4)).$ Therefore, by
Proposition~\ref{pr2} there is $0<t_1<1$ such that $\gamma(t_1)\in
S(x_i, \varepsilon/4).$ We may assume that $\gamma(t)\not\in B(x_i,
\varepsilon/4)$ for $t>t_1.$ Put $\gamma_1:=\gamma|_{[t_1, 1]}.$
Similarly, $|\gamma_1|\cap B(x_i, \varepsilon/2)\ne\varnothing\ne
|\gamma_1|\cap (D\setminus B(x_i, \varepsilon/2)).$ By
Proposition~\ref{pr2} there is $t_1<t_2<1$ with $\gamma(t_2)\in
S(x_i, \varepsilon/2).$ We may assume that $\gamma(t)\in B(x_i,
\varepsilon/2)$ for $t<t_2.$ Put $\gamma_2:=\gamma|_{[t_1, t_2]}.$
Then, the path $\gamma_2$ is a subpath of $\gamma,$ which belongs to
the family $\Gamma(S(x_i, \varepsilon/4), S(x_i, \varepsilon/2),
A(x_i, \varepsilon/4, \varepsilon/2)).$ Thus, the
relation~(\ref{eq7C}) is established. By Proposition~\ref{pr3}
\begin{equation}\label{eq8C}
M(f_{m}(\Gamma(S(x_i, \varepsilon/4), S(x_i, \varepsilon/2)), A(x_i,
\varepsilon/4, \varepsilon/2)))\leqslant
\frac{\omega_{n-1}}{I_i^{n-1}}<\infty\,,
\end{equation}
where $I_i=I_i(x_i, \varepsilon/4,
\varepsilon/2)=\int\limits_{\varepsilon/4}^{\varepsilon/2}
\frac{dr}{rq_{x_i}^{\frac{1}{n-1}}(r)}$ and $I_i<\infty$ because
$q_{x_i}(r)<\infty$ for $r\in E_1$ and some set of positive linear
Lebesgue measure by the assumption of the lemma.

\medskip
Finally, by~(\ref{eq12}), (\ref{eq13}), (\ref{eq6D}), (\ref{eq7C})
and (\ref{eq8C}) we obtain that
$$M(\Gamma(f_m(A), E_m,\overline{{\Bbb R}^n}))=M(\Gamma_m)\leqslant$$
$$\leqslant M(f_m(\Delta_m))\leqslant \sum\limits_{i=1}^{M_0}M(f_m(\Gamma_{mi}))\leqslant$$
\begin{equation}\label{eq16}
\leqslant\sum\limits_{i=1}^{M_0}M(f_m(\Gamma(S(x_i, \varepsilon/4),
S(x_i, \varepsilon/2), A(x_i, \varepsilon/4,
\varepsilon/2))))\leqslant
\sum\limits_{i=1}^{M_0}\frac{\omega_{n-1}}{I_i^{n-1}}:=C<\infty\,.
\end{equation}
Since $P$ in~(\ref{eq9B}) may be done arbitrary big, the
relations~(\ref{eq9B}) and~(\ref{eq16}) contradict each other. This
completes the proof.~$\Box$
\end{proof}

\medskip
\begin{corollary}\label{cor4}
{\it\, The statement of Lemma~\ref{lem2} remains true if, instead of
the condition regarding the integrability of the function $Q$ over
spheres with respect to some set $E_1$ is replaced by a simpler
condition: $Q\in L^1(D).$}
\end{corollary}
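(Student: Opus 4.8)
\medskip
{\it Proof of Corollary~\ref{cor4}.} The plan is to show that the hypothesis on $Q$ used in Lemma~\ref{lem2}---namely, integrability of $Q$ over a set of spheres of positive linear measure---is an automatic consequence of the condition $Q\in L^1(D),$ so that Corollary~\ref{cor4} follows from Lemma~\ref{lem2} with no new argument. First I would extend $Q$ by zero to all of ${\Bbb R}^n;$ this changes neither $\Vert Q\Vert_1$ nor the moduli inequalities~(\ref{eq2*!A})--(\ref{eq8BC}), since the integration in~(\ref{eq2*!A}) involves only $A\cap D.$ Next, fix an arbitrary point $x_0\in D$ and arbitrary radii $0<r_1<r_2<r_0:=\sup\limits_{x\in D}|x-x_0|.$ By the Fubini theorem applied in spherical coordinates (cf.~\cite[Theorem~8.1.III]{Sa}, exactly as in the proof of Corollary~\ref{cor1}),
$$\int\limits_{r_1<|x-x_0|<r_2}Q(x)\,dm(x)=\int\limits_{r_1}^{r_2}\left(\,\int\limits_{S(x_0, r)}Q(x)\,d\mathcal{H}^{n-1}(x)\right)dr\,,$$
and the left-hand side is finite because $Q\in L^1(D).$ Hence the nonnegative measurable function $r\mapsto \int_{S(x_0, r)}Q\,d\mathcal{H}^{n-1}$ is finite for almost every $r\in[r_1, r_2].$

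In particular, the set $E_1\subset[r_1, r_2]$ of those $r$ for which $Q$ is $\mathcal{H}^{n-1}$-integrable over $S(x_0, r)$ has full linear Lebesgue measure in $[r_1, r_2],$ and a fortiori positive linear measure. Since $x_0, r_1, r_2$ were arbitrary, this is precisely the assumption of Lemma~\ref{lem2}. Therefore Lemma~\ref{lem2} applies verbatim and produces a constant $\delta_1>0$ with $h(f_m(A), \partial f_m(D))\geqslant\delta_1$ for all $m=1,2,\ldots,$ which is the assertion of the corollary.

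I do not expect any genuine obstacle here: the whole proof is the one-line measure-theoretic reduction via Fubini's theorem that already appears in the proofs of Corollaries~\ref{cor1} and~\ref{cor2}. The only point deserving a word of care is that the reduction must be performed for \emph{every} admissible triple $x_0, r_1, r_2,$ but this is immediate, since the displayed Fubini identity holds for each of them separately.~$\Box$
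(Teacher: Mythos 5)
Your proof is correct and is exactly the argument the paper uses: it reduces Corollary~\ref{cor4} to Lemma~\ref{lem2} by noting, via Fubini's theorem, that $Q\in L^1(D)$ forces the spherical integrals $\int_{S(x_0,r)}Q\,d\mathcal{H}^{n-1}$ to be finite for a.e.\ $r$, which yields the required set $E_1$ of positive measure. The paper simply cites the proof of Corollary~\ref{cor1} for this step; you have spelled it out in full, but the route is the same.
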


\begin{proof}
It follows directly from Lemma~\ref{lem2} similarly to the proof of
Corollary~\ref{cor1}.~$\Box$
\end{proof}

\medskip
The following statement holds.

\medskip
\begin{proposition}\label{pr6}
{\it\, Let $D$ be a domain in ${\Bbb R}^n,$ $n\geqslant 2,$ and let
$x_0\in G,$ where $K:=\overline{G}$ is a compactum in a domain $D.$
Then $h(x_0,
\partial G)<h(x_0, \partial D).$}
\end{proposition}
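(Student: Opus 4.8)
\emph{Proof idea.} The plan is to join $x_0$ to a nearest boundary point of $D$ by an arc along which the chordal distance from $x_0$ strictly increases, and to observe that any such arc must meet $\partial G$ strictly before it reaches $\partial D$. First I would clear away trivialities: $\partial D\neq\varnothing$ (otherwise $h(x_0,\partial D)=+\infty$ and there is nothing to prove); being a closed subset of the compact space $\overline{{\Bbb R}^n}$, $\partial D$ is compact, so the infimum defining $h(x_0,\partial D)$ is attained at some $w_*\in\partial D$. Since $w_*\in\partial D$ and $D$ is open, $w_*\notin D\supset\overline G$, so $w_*\notin\overline G$; on the other hand $x_0\in G\subset\overline G\subset D$, and $\partial G=\overline G\setminus G$ is nonempty (as $G$ is a nonempty bounded open set) and compact.

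The only non-combinatorial ingredient is the following classical fact. Identify $\overline{{\Bbb R}^n}$ with a round sphere $S$ on which the restriction of the Euclidean distance is exactly the chordal metric $h$. Then any two points of $S$ are joined by a great-circle arc $\gamma\colon[0,1]\to\overline{{\Bbb R}^n}$ whose central angle, measured from $\gamma(0)$, ranges monotonically over some $[0,\theta_0]$ with $\theta_0\leqslant\pi$; since the chordal distance between two points of $S$ equals a fixed positive multiple of the sine of half their central angle, and that half-angle stays in $[0,\pi/2]$, the function $t\mapsto h(\gamma(0),\gamma(t))$ is \emph{strictly increasing} on $[0,1]$. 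I would apply this with $\gamma(0)=x_0$ and $\gamma(1)=w_*$.

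Now the crossing step. The set $|\gamma|$ is connected; it meets $G$ (at $x_0$) and meets $\overline{{\Bbb R}^n}\setminus G$ (at $w_*$, which is not even in $\overline G$), so by Proposition~\ref{pr2}, applied with $A=G$, there is $t_*\in[0,1]$ with $\gamma(t_*)\in\partial G$. Since $\gamma(1)=w_*\notin\overline G\supset\partial G$, we have $\gamma(t_*)\neq\gamma(1)$, hence $t_*<1$; strict monotonicity then gives $h(x_0,\gamma(t_*))<h(x_0,\gamma(1))=h(x_0,w_*)=h(x_0,\partial D)$. Combining with the obvious bound $h(x_0,\partial G)\leqslant h(x_0,\gamma(t_*))$ yields $h(x_0,\partial G)<h(x_0,\partial D)$, as required.

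The step needing the most care — and essentially the only subtle point — is the strict monotonicity of $t\mapsto h(x_0,\gamma(t))$ up to and including $t=1$: this is exactly what makes the conclusion a strict inequality rather than merely $\leqslant$, and it also absorbs the borderline case in which $x_0$ and $w_*$ are antipodal on $S$ (there the connecting arc is not unique, but any great-circle arc with central angle increasing over all of $[0,\pi]$ still works, since $\sin$ is strictly increasing on $[0,\pi/2]$). Everything else is a routine use of Proposition~\ref{pr2} and of the definitions of $\partial(\cdot)$ and of the distance from a point to a set.
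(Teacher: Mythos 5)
Your proof is correct, but it takes a genuinely different route than the paper's. You realize the chordal metric as the Euclidean distance on a round sphere, join $x_0$ to a nearest point $w_*\in\partial D$ by a great-circle arc, use strict monotonicity of $t\mapsto h(x_0,\gamma(t))$ along that arc, and argue via Proposition~\ref{pr2} that the arc meets $\partial G$ strictly before it reaches $w_*$. The paper instead makes the strictness come from a chordal-ball argument: it first shows that $B_h(x_0,h(x_0,\partial G))\subset G$ (again via Proposition~\ref{pr2}, applied to the connected set $B_h$), so that the closed ball of that radius sits in $\overline G=K\subset D$; then, under the contrapositive assumption $h(x_0,\partial G)\geqslant h(x_0,\partial D)$, the minimizer $z_0\in\partial D$ lies in that closed ball and hence in $D$, a contradiction (the paper's text has a $y_0$/$z_0$ typo here but the intent is clear). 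Both arguments are sound. The paper's version is somewhat shorter and entirely metric-space-intrinsic; yours is more explicitly geometric and isolates the ``geodesic convexity of chordal balls'' fact that underlies the paper's ball inclusion. One small remark: you do not actually need $G$ bounded to conclude $\partial G\neq\varnothing$; it suffices, as the paper notes, that $\overline{{\Bbb R}^n}$ is connected and $\varnothing\neq G\subsetneq\overline{{\Bbb R}^n}$ (the latter because $\overline G\subset D\subsetneq\overline{{\Bbb R}^n}$).
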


\medskip
\begin{proof}
Since $\overline{{\Bbb R}^n}$ is connected, $\partial
D\ne\varnothing.$ By the same reason, $\partial G\ne\varnothing.$
Thus, $h(x_0,
\partial G)$ and $h(x_0,
\partial D)$ are well-defined.

Let us firstly prove that $B_h(z_0, h(z_0, \partial G))\subset G.$
Otherwise, $B_h(z_0, h(z_0, \partial G))\cap G\ne\varnothing\ne
B_h(z_0, h(z_0, \partial G))\setminus G.$ Since $B_h(z_0, h(z_0,
\partial G))$ is connected, by \cite[Theorem~1.I.5.46]{Ku$_2$} there
is $w_0\in B_h(z_0, h(z_0,
\partial G))\cap \partial G,$ so $h(z_0, \partial G)\leqslant h(w_0, z_0)<h(z_0, \partial G),$
which is impossible. Thus, $B_h(z_0, h(z_0, \partial G))\subset G,$
as required. Now, $\overline{B_h(z_0, h(z_0, \partial G))}\subset
\overline{G},$ as well.

Now, we prove that $h(x_0,
\partial G)<h(x_0, \partial D).$
Assume the contrary, i.e., $h(x_0,
\partial G)\geqslant h(x_0, \partial D).$ Since $\partial G$ and $\partial
D$ are compacts in $\overline{{\Bbb R}^n},$ there are $y_0\in
\partial G, z_0\in \partial D$ such that $h(x_0,
\partial G)=h(x_0, y_0)$ and $h(x_0,
\partial D)=h(x_0, z_0).$ By the assumption, $h(x_0, y_0)\geqslant h(x_0,
z_0).$ Now, by the proving above, $y_0\in \overline{B_h(x_0, h(x_0,
z_0))}\subset \overline{G}$ which is impossible, because $y_0\in
\partial D\cap K,$ but, at the same time, all of points $y_0$ in $K$ are inner with respect to
$D.$ The proposition is proved.~$\Box$
\end{proof}

\medskip
In Lemma~\ref{lem2} we have used the condition: ``$h(E_m)\geqslant
r$ whenever $E_m$ is a component of $\partial f_m(D)$''. Now we will
show that the statement of the lemma is true even without this
condition, but in the situation, when all $f_m(D)$ belong to some
fixed compact set $K\ne \overline{{\Bbb R}^n}.$

\medskip
Given a domain $D$ in ${\Bbb R}^n,$ $n\geqslant 2,$ a non-degenerate
continuum $A\subset D,$ a Lebesgue measurable function
$Q:D\rightarrow [0, \infty],$ a compactum $K\subset \overline{{\Bbb
R}^n},$ $K\ne \overline{{\Bbb R}^n},$ and a number $\delta>0$ we
denote by $\frak{B}^{\delta, K}_{A, Q}(D)$ a family of all
homeomorphisms $f:D\rightarrow K$
satisfying~(\ref{eq2*!A})--(\ref{eq8BC}) for all
$0<r_1<r_2<d_0:=\sup\limits_{x\in D}|x-x_0|$ such that
$h(f(A))\geqslant \delta.$ The following assertion is true.

\medskip
\begin{lemma}\label{lem4}{\it\, Let $D$ be a domain in ${\Bbb R}^n,$ $n\geqslant 2,$ let $A$
be a non-degenerate continuum in $D$ and let $f_m\in
\frak{B}^{\delta, K}_{A, Q}(D).$ Assume that, for each point $x_0\in
D$ and for every $0<r_1<r_2<r_0:=\sup\limits_{x\in D}|x-x_0|$ there
is a set $E_1\subset[r_1, r_2]$ of a positive linear Lebesgue
measure such that the function $Q$ is integrable with respect to
$\mathcal{H}^{n-1}$ over the spheres $S(x_0, r)$ for every $r\in
E_1.$ Then there exists $\delta_1>0$ such that $h(f_m(A),
\partial f_m(D))\geqslant \delta_1$ for any $m=1,2,\ldots .$ }
\end{lemma}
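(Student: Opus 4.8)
\textbf{Proof plan for Lemma~\ref{lem4}.} The plan is to mimic the proof of Lemma~\ref{lem2} almost verbatim, the only difference being that there the hypothesis ``$h(E)\geqslant r$ for every component $E$ of $\partial f_m(D)$'' was used to guarantee that a suitable component $E_m$ of $\partial f_m(D)$ meets $\partial U$ and $\partial V$; here that role will be played instead by the fact that $f_m(D)\subset K\ne\overline{{\Bbb R}^n}$. First I would argue by contradiction: if the conclusion fails, then after passing to a subsequence (and relabelling) we may assume $h(f_m(A),\partial f_m(D))<1/m$, and we pick $x_m\in f_m(A)$, $y_m\in\partial f_m(D)$ realizing this distance, with $x_m,y_m\to y_0$ for some $y_0\in\overline{{\Bbb R}^n}$ by compactness. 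Note $y_0\in\overline{K}=K$, and since $K\ne\overline{{\Bbb R}^n}$ there is a point $w_0\in\overline{{\Bbb R}^n}\setminus K$ and hence a positive distance $d_* := h(y_0,\overline{{\Bbb R}^n}\setminus K)\le h(y_0,w_0)$ is bounded away from the ``full'' chordal size; more precisely, any connected set containing both $y_m$ and a point outside $K$ must exit a small chordal ball around $y_0$.

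The key structural point is this: let $E_m$ be the connected component of $\overline{{\Bbb R}^n}\setminus f_m(D)$ containing $y_m$ (note $\partial f_m(D)\subset \overline{{\Bbb R}^n}\setminus f_m(D)$ since $f_m(D)$ is open, so $y_m$ lies in such a component, and $E_m$ is closed and connected). Since $f_m(D)\subset K$, the complement $\overline{{\Bbb R}^n}\setminus f_m(D)\supset \overline{{\Bbb R}^n}\setminus K\ne\varnothing$, and in fact $\overline{{\Bbb R}^n}\setminus f_m(D)$ is connected-component-wise large: I would show that $E_m$ is non-degenerate with $h(E_m)$ bounded below by a constant independent of $m$. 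Indeed, either $E_m$ already meets $\overline{{\Bbb R}^n}\setminus K$ (and then $h(E_m)\ge h(y_m, \overline{{\Bbb R}^n}\setminus K)$, which for large $m$ is close to $d_*>0$), or $E_m\subset K$; but even in the latter case the component of $\overline{{\Bbb R}^n}\setminus f_m(D)$ meeting $\overline{{\Bbb R}^n}\setminus K$ is another large component, and one can instead run the argument using $\partial f_m(D)$ directly together with $f_m(A)$, since $\partial f_m(D)$ is a continuum separating $f_m(A)$ from infinity-like points. To keep things clean, set $2r_0 := \min\{d_*/2,\delta/2\}$ and $U := B_h(y_0, r_0)$; then exactly as in Lemma~\ref{lem2} one gets, for large $m$, that $E_m\cap\partial U\ne\varnothing$ (because $y_m\in E_m$, $y_m\to y_0$, and $E_m$ exits $U$ as $h(E_m)\ge 2r_0\ge h(U)$) and $f_m(A)\cap\partial U\ne\varnothing$ (because $x_m\in f_m(A)$, $x_m\to y_0$, and $h(f_m(A))\ge\delta\ge 2r_0\ge h(U)$), invoking Proposition~\ref{pr2} in each case.

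From here the proof is identical to that of Lemma~\ref{lem2}: apply Lemma~\ref{lem3} (Väisälä's weak flatness) to get, for a prescribed $P>0$, a neighborhood $V\subset U$ of $y_0$ with $M(\Gamma(E,F,\overline{{\Bbb R}^n}))>P$ for any continua $E,F$ meeting $\partial U$ and $\partial V$; check that $E_m$ and $f_m(A)$ also meet $\partial V$ for large $m$ by the same reasoning; conclude $M(\Gamma(f_m(A),E_m,\overline{{\Bbb R}^n}))>P$. Then bound this modulus from above by a constant independent of $m$: truncate each connecting path at the last moment it lies in $f_m(D)$, pull back by $f_m^{\,-1}$, cover $A$ by finitely many balls $B(x_i,\varepsilon/4)$, $i=1,\dots,M_0$ (with $M_0$ depending only on $A$), use the minorization $\Gamma_{mi}>\Gamma(S(x_i,\varepsilon/4),S(x_i,\varepsilon/2),A(x_i,\varepsilon/4,\varepsilon/2))$ exactly as in (\ref{eq7C}), and apply Proposition~\ref{pr3} together with the hypothesis that $Q$ is integrable over the spheres $S(x_i,r)$ for $r$ in a set of positive measure to get $M(f_m(\Gamma_{mi}))\le\omega_{n-1}/I_i^{n-1}<\infty$. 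Summing over $i$ gives $M(\Gamma(f_m(A),E_m,\overline{{\Bbb R}^n}))\le\sum_{i=1}^{M_0}\omega_{n-1}/I_i^{n-1} =: C<\infty$, contradicting $P$ arbitrary. I expect the only genuinely new point — and thus the main obstacle — to be the lower bound on $h(E_m)$: one must be careful about which component of $\overline{{\Bbb R}^n}\setminus f_m(D)$ one selects and verify it is non-degenerate with chordal diameter bounded below uniformly in $m$, using only $f_m(D)\subset K\ne\overline{{\Bbb R}^n}$; the cleanest route is probably to observe that $\partial f_m(D)$ itself, being the frontier of an open proper subset of $\overline{{\Bbb R}^n}$ whose complement contains the fixed open set $\overline{{\Bbb R}^n}\setminus K$, cannot be contained in an arbitrarily small chordal ball, which is enough to feed Proposition~\ref{pr2}.
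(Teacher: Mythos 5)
Your proposal has a genuine gap at precisely the point you yourself flag as ``the main obstacle,'' and the fallback you sketch there does not close it. You take $E_m$ to be the connected component of $\overline{{\Bbb R}^n}\setminus f_m(D)$ containing the boundary point $y_m$, and you want $h(E_m)$ bounded below. But $f_m(D)\subset K$ only guarantees that \emph{some} component of $\overline{{\Bbb R}^n}\setminus f_m(D)$ is large (the one meeting $\overline{{\Bbb R}^n}\setminus K$); it does not force the component containing $y_m$ to be large. Nothing in the hypotheses of Lemma~\ref{lem4} prevents $f_m(D)$ from being, say, $B(0,2)\setminus\overline{B(z_m,\rho_m)}$ with $z_m\to y_0$ and $\rho_m\to 0$: then $y_m$ lies on a shrinking sphere $S(z_m,\rho_m)$ whose chordal diameter tends to $0$, so $E_m$ never meets $\partial U$, and Lemma~\ref{lem3} gives you nothing. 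Your suggested escape, ``$\partial f_m(D)$ is a continuum separating $f_m(A)$ from infinity-like points,'' is simply false in general: the boundary of a domain need not be connected, and Lemma~\ref{lem3} demands a \emph{continuum} intersecting both $\partial U$ and $\partial V$. So the whole Loewner/V\"ais\"al\"a step cannot be run the way you describe.

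The paper's proof avoids this entirely by never looking for a large component of $\partial f_m(D)$. Instead it picks points $x_m,y_m,z_m\in A$ (realizing $h(f_m(A))\geqslant\delta$ and $h(f_m(A),\partial f_m(D))\to 0$), joins their limits $x_0,z_0,y_0$ by a short polygonal chain $w_0,\dots,w_p$ inside $D$, and applies a pigeonhole to the triangle inequality~(\ref{eq5D}) to extract one index $i$ with $h(f_m(w_{i-1}),f_m(w_i))\geqslant\delta_*:=\delta/(p+4)$ along a subsequence. Then it restricts $f_m$ to a small ball $B=B(w_i,\varepsilon_0/2)\subset\subset D$. The crucial observation you are missing is that $\partial f_m(B)=f_m(S(w_i,\varepsilon_0/2))$ is \emph{automatically connected} (image of a sphere under a homeomorphism), and, because $f_m(D)\subset K$ lies in a fixed compactum (after an inversion if needed), the Euclidean lower bound $d(f_m(S(w_i,\varepsilon_0/2)))\geqslant\delta_*$ can be converted into a uniform chordal lower bound $r_{**}>0$. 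Hence $f_m|_B\in\frak{A}^{\delta_*,r_{**}}_{A_1,Q}(B)$ with $A_1=\overline{B(w_i,\varepsilon_0/4)}$, and Lemma~\ref{lem2} \emph{applies to the restriction}. Proposition~\ref{pr6} then transfers the resulting lower bound on $h(\cdot,\partial f_m(B))$ to a lower bound on $h(\cdot,\partial f_m(D))$, yielding the contradiction. In short: Lemma~\ref{lem4} is proved by \emph{reduction to Lemma~\ref{lem2} on a small ball}, not by reproving the modulus estimates with $K$ in place of the component-size hypothesis; the role of $K\ne\overline{{\Bbb R}^n}$ is only to provide the Euclidean-to-chordal comparison for the sphere image.
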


\medskip
\begin{proof} Let us prove that statement by the contradiction,
i.e., assume that there exists an increasing subsequence of numbers
$m_k,$ $k=1,2,\ldots ,$ such that $h(f_{m_k}(A),
\partial f_{m_k}(D))\rightarrow 0$ as $m\rightarrow\infty.$ Without loss of generality, we
may consider that the same sequence $f_m$ satisfy this condition,
i.e.
\begin{equation}\label{eq1}
h(f_m(A),
\partial f_m(D))\rightarrow 0\,,\qquad m\rightarrow\infty\,.
\end{equation}
Otherwise, we may consider the sequence $f_{m_k}$ instead of $f_m$
in the conditions of the lemma.

\medskip
Let $x_m, y_m$ be points in $A$ such that $h(f_m(x_m),
f_m(y_m))=h(f_m(A)) \geqslant \delta.$ We may assume that $x_m
\rightarrow x_0,$ $y_m\rightarrow y_0$ as $m\rightarrow\infty,$
$x_0, y_0\in A.$ Let $\varepsilon_0:=d(A, \partial D).$ If $D={\Bbb
R}^n,$ we set $\varepsilon_0=1.$ Besides that, let $z_m\in A$ be a
point such that
\begin{equation}\label{eq2A}
h(f_m(A), \partial f_m(D))=h(f_m(z_m), \zeta_m)\rightarrow
0\,,\qquad m\rightarrow\infty\,,
\end{equation}
for some $\zeta_m\in
\partial f_m(D).$ We also may consider that $z_m\rightarrow z_0\in
A$ as $m\rightarrow\infty,$
\begin{figure}[h]
\centerline{\includegraphics[scale=0.6]{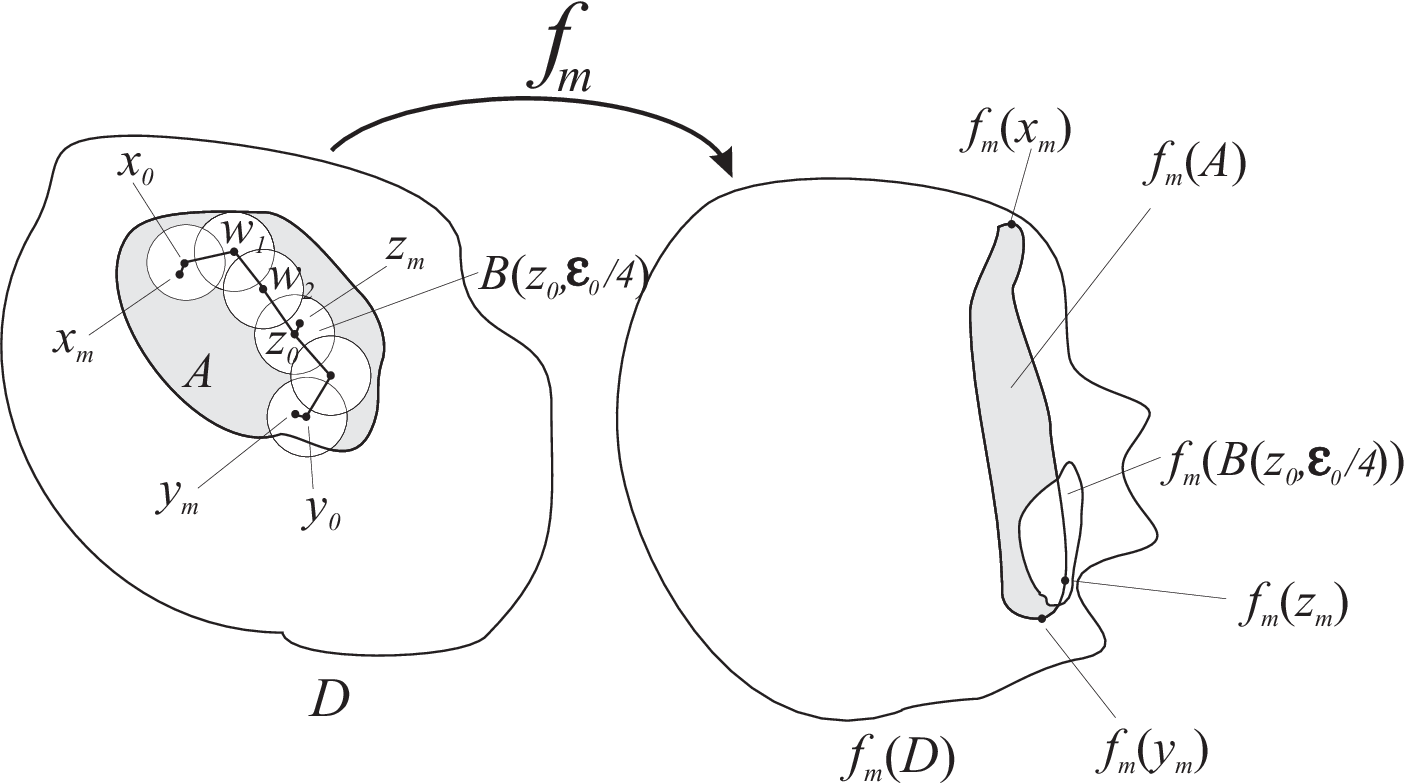}} \caption{To proof
of Lemma~\ref{lem4}}\label{fig1}
\end{figure}
and that $x_m\in B(x_0, \varepsilon_0/4),$ $y_m\in B(y_0,
\varepsilon_0/4)$ and $z_m\in B(z_0, \varepsilon_0/4)$ for all $m\in
{\Bbb N}$ (see Figure~\ref{fig1}). Now we sequently join the points
$x_0,$ $z_0$ and $y_0$ by the polygonal line connected the points
$x_0=w_0, w_1, w_2,\ldots w_k, w_{k+1}=z_0, w_{k+2}, w_{k+3},\ldots
, w_p=y_0$ such that $|w_{i}-w_{i+1}|<\varepsilon_0/4$ for any
$0\leqslant i\leqslant p-1.$ Observe that, by the triangle
inequality,
$$\delta\leqslant h(f_m(x_m), f_m(y_m))\leqslant h(f_m(x_m),
f_m(x_0))+\sum\limits_{i=1}^{k+1} h(f_m(w_{i-1}), f_m(w_i))+$$
\begin{equation}\label{eq5D}
+2h(f_m(z_0), f_m(z_m))+\sum\limits_{i=k+2}^{p} h(f_m(w_{i-1}),
f_m(w_i))+h(f_m(y_0), f_m(y_m))\,.
\end{equation}
Since the sum~(\ref{eq5D}) has only $p+4$ terms, there is at least
one term greater than or equal to $\delta_*:=\frac{\delta}{p+4}>0$
for an infinite number of values of $m.$ In what follows, our
reasoning should be divided into two cases:

\medskip
1) $h(f_m(z_0), f_m(z_m))\geqslant \delta_*>0$ for infinitely many
$m\in{\Bbb N},$

\medskip
2) $h(f_m(z_0), f_m(z_m))\rightarrow 0$ as $m\rightarrow\infty.$

\medskip
Let us first consider {\bf case 1)} (the simpler one). So, assume
that $h(f_m(z_0), f_m(z_m))\geqslant \delta_*>0$ for infinitely many
$m\in{\Bbb N}.$ Without loss of generality, using renumbering, if
required, we may assume that the latter holds for every $m\in {\Bbb
N}.$ In addition, since by the assumption $f_m(B(z_0,
\varepsilon_0/4))\subset K\ne \overline{{\Bbb R}^n},$ applying the
inversion $\psi(x)=\frac{x}{|x|^2}$ and considering mappings
$\widetilde{f}_m:=\psi\circ f_m,$ if required, we may consider that
$K$ is a compactum in ${\Bbb R}^n.$ Now, $f_m(\overline{B(z_0,
\varepsilon_0/2)})$ is a compactum in ${\Bbb R}^n$ and, obviously,
$$d(f_m(\overline{B(z_0, \varepsilon_0/2)}))=d(\partial(f_m(B(z_0,
\varepsilon_0/2))))=d(f_m(S(z_0, \varepsilon_0/2)))\,.$$
On the other hand, we set $A_1:=\overline{B(z_0, \varepsilon_0/4)}$
$$d(f_m(\overline{B(z_0,
\varepsilon_0/2)}))\geqslant d(f_m(A_1))\geqslant d(f_m(z_m),
f_m(z_0))\geqslant\delta_*>0$$
for all $m=1,2,\ldots ,$ because $z_m, z_0\in A_1$ for all $m\in
{\Bbb N}.$

Now, $d(\partial(f_m(B(z_0, \varepsilon_0/2))))\geqslant \delta_*>0$
for all $m=1,2,\ldots .$ Since the set $f_m(D)$ lies in the
compactum $K\subset {\Bbb R}^n,$ we obtain that
$h(\partial(f_m(B(z_0, \varepsilon_0/2))))\geqslant r_{**}>0$ for
all sufficiently large $m=1,2,\ldots $ and some $r_{**}>0$ while
$\partial(f_m(B(z_0, \varepsilon_0/2)))$ is connected as a image of
the connected set $S(z_0, \varepsilon_0/2)$ under the mapping $f_m.$
Now, $f_m\in \frak{A}^{\delta_*, r_{**}}_{A_1, Q}(B(z_0,
\varepsilon_0/2))$ and by Lemma~\ref{lem2} $h(f_m(A_1), \partial
f_m(B(z_0, \varepsilon_0/2)))\geqslant \delta_1>0$ for some
$\delta_1>0.$ Since $z_m\in A_1,$ it follows that $h(f_m(z_m),
\partial f_m(B(z_0, \varepsilon_0/2)))\geqslant \delta_1>0$ and,
consequently, by Proposition~\ref{pr6} $h(f_m(z_m), \partial
f_m(D))\geqslant \delta_1>0.$ But since $h(f_m(A), \partial
f_m(D))=h(f_m(z_m), \zeta_m)$ for some $\zeta_m\in \partial f_m(D),$
we have that $h(f_m(z_m), \zeta_m)\geqslant h(f_m(z_m), \partial
f_m(D))\geqslant \delta_1>0$ for sufficiently large $m\in {\Bbb N}.$
The latter contradicts with~(\ref{eq1}).

\medskip
Let us consider the {\bf case 2),} when $h(f_m(z_0),
f_m(z_m))\rightarrow 0$ as $m\rightarrow\infty.$ Here there are 2)
subcases:

\medskip
$2_1)$ $h(f_m(z_0), f_m(z_m))\rightarrow 0$ as $m\rightarrow\infty,$
$h(f_m(w_{i-1}), f_m(w_i))\rightarrow 0$ as $m\rightarrow\infty$ for
any $0\leqslant i\leqslant p,$ however, one of two following
conditions hold: either $h(f_m(x_m), f_m(x_0))\geqslant \delta_*>0$
or $h(f_m(y_m), f_m(y_0))\geqslant \delta_*>0$ for infinitely many
$m\in {\Bbb N};$

\medskip
$2_2)$ $h(f_m(z_0), f_m(z_m))\rightarrow 0$ as $m\rightarrow\infty,$
besides that, $h(f_m(w_{i-1}), f_m(w_i))\geqslant \delta_*>0$ for
infinitely many $m\in {\Bbb N}$ and at least for one $0\leqslant
i\leqslant p.$

\medskip
Let us consider {\bf the situation $2_1)$} firstly. Assume that
$h(f_m(x_m), f_m(x_0))\geqslant \delta_*>0$ for infinitely many
$m\in {\Bbb N}$ (the alternative situation, when $h(f_m(y_m),
f_m(y_0))\geqslant \delta_*>0,$ may be consider similarly). Now, we
have that $h(f_{m_k}(x_{m_k}), f_{m_k}(x_0))\geqslant \delta_*>0$
for all $k\in {\Bbb N}.$ We may consider that the latter holds for
$m$ instead $m_k,$ i.e., $h(f_m(x_m), f_m(x_0))\geqslant \delta_*>0$
for all $m\in {\Bbb N}.$ Otherwise, we consider $f_{m_k}$ instead of
$f_m$ in the conditions of the lemma.

Now we will actually repeat the reasoning from point~1). Indeed,
since by the assumption $f_m(B(x_0, \varepsilon_0/2))\subset K\ne
\overline{{\Bbb R}^n},$ applying the inversion
$\psi(x)=\frac{x}{|x|^2}$ and considering mappings
$\widetilde{f}_m:=\psi\circ f_m,$ if required, we may consider that
$K$ is a compactum in ${\Bbb R}^n.$ Now, $f_m(\overline{B(x_0,
\varepsilon_0/2)})$ is a compactum in ${\Bbb R}^n$ and, obviously,
$$d(f_m(\overline{B(x_0, \varepsilon_0/2)}))=d(\partial(f_m(B(x_0,
\varepsilon_0/2))))=d(f_m(S(x_0, \varepsilon_0/2)))\,.$$
On the other hand, we set $A_1:=\overline{B(x_0, \varepsilon_0/4)}$
$$d(f_m(\overline{B(x_0,
\varepsilon_0/2)}))\geqslant d(f_m(A_1))\geqslant d(f_m(x_m),
f_m(x_0))\delta_*>0$$
for all $m=1,2,\ldots ,$ because $x_m, x_0\in A_1$ for sufficiently
large $m\in {\Bbb N}.$

Now, $d(\partial(f_m(B(x_0, \varepsilon_0/2))))\geqslant \delta_*>0$
for all $m=1,2,\ldots .$ Since the set $f_m(D)$ lies in the
compactum $K\subset {\Bbb R}^n,$ we obtain that
$h(\partial(f_m(B(x_0, \varepsilon_0/2))))\geqslant r_{**}>0$ for
all sufficiently large $m=1,2,\ldots $ and some $r_{**}>0$ while
$\partial(f_m(B(x_0, \varepsilon_0/2)))$ is connected as a image of
the connected set $S(x_0, \varepsilon_0/2)$ under the mapping $f_m.$
Now, $f_m\in \frak{A}^{\delta_*, r_{**}}_{A_1, Q}(B(x_0,
\varepsilon_0/2))$ and by Lemma~\ref{lem2} $h(f_m(A_1), \partial
f_m(B(z_0, \varepsilon_0/2)))\geqslant \delta_1>0$ for some
$\delta_1>0.$ Since $x_m\in A_1,$ it follows that $h(f_m(x_m),
\partial f_m(B(x_0, \varepsilon_0/2)))\geqslant \delta_1>0$ and,
consequently, by Proposition~\ref{pr6} $h(f_m(x_m), \partial
f_m(D))\geqslant \delta_1>0.$ Observe that $h(f_m(x_m), \partial
f_m(D))\leqslant h(f_m(x_m), \zeta_m),$ where $\zeta_m\in \partial
f_m(D)$ is from~(\ref{eq2A}). Now
\begin{equation}\label{eq3}
h(f_m(x_m), \zeta_m)\geqslant \delta_1>0\,, m=1,2,\ldots .
\end{equation}
Now, since $h(f_m(w_{i-1}), f_m(w_i))\rightarrow 0$ as
$m\rightarrow\infty$ for any $0\leqslant i\leqslant p$ and
simultaneously $h(f_m(z_0), f_m(z_m))\rightarrow 0$ as
$m\rightarrow\infty,$ by the triangle inequality and~(\ref{eq3}) we
obtain that
$$h(f_m(z_m), \zeta_m)\geqslant h(\zeta_m, f_m(x_m))-h(f_m(x_m), f_m(z_m))\geqslant$$
\begin{equation}\label{eq4}
\geqslant h(\zeta_m, f_m(x_m))-\sum\limits_{i=1}^{k+1}
h(f_m(w_{i-1}), f_m(w_i))-h(f_m(z_0), f_m(z_m))\geqslant
\delta_1/2>0
\end{equation}
since the later two terms with a minus sign tend to zero by the
assumption in this subparagraph~$2_1).$ The relation~(\ref{eq4})
contradicts with~(\ref{eq2A}). The latter completes the
consideration of item~$2_1).$

\medskip
It remains to consider the situation~$2_2).$ Observe that the
consideration of this situation is no different from the previous
one, however, we will conduct this consideration in full. Indeed,
let $h(f_m(w_{i-1}), f_m(w_i))\geqslant \delta_*>0$ for infinitely
many $m\in {\Bbb N}$ and at least for one $0\leqslant i\leqslant p.$
As above, we may consider that latter holds for any $m\in {\Bbb N}.$
Now we consider the case $0\leqslant i\leqslant k+1,$ while the
consideration of the case $k+1\leqslant i\leqslant p$ may be done
similarly. We also may consider that $i$ be the largest index
between $0\leqslant i\leqslant p$ for which the relation
$h(f_m(w_{i-1}), f_m(w_i))\geqslant \delta_*>0$ holds for infinitely
many $m\in {\Bbb N}$ (otherwise, we may increase $i$). Now, by the
definition, $h(f_m(w_{i-1}), f_m(w_i))\geqslant \delta_*>0$ for
$m\in {\Bbb N},$ and $h(f_m(w_{l-1}), f_m(w_l))\rightarrow 0$ as
$m\rightarrow\infty$ for all $i<l\leqslant k+1.$

\medskip
By the assumption $f_m(B(w_{i}, \varepsilon_0/2))\subset K\ne
\overline{{\Bbb R}^n},$ applying the inversion
$\psi(x)=\frac{x}{|x|^2}$ and considering mappings
$\widetilde{f}_m:=\psi\circ f_m,$ if required, we may consider that
$K$ is a compactum in ${\Bbb R}^n.$ Now, $f_m(\overline{B(w_{i},
\varepsilon_0/2)})$ is a compactum in ${\Bbb R}^n$ and, obviously,
$$d(f_m(\overline{B(w_{i}, \varepsilon_0/2)}))=d(\partial(f_m(B(w_{i},
\varepsilon_0/2))))=d(f_m(S(w_{i}, \varepsilon_0/2)))\,.$$
On the other hand, we set $A_1:=\overline{B(x_0, \varepsilon_0/4)}$
$$d(f_m(\overline{B(w_{i},
\varepsilon_0/2)}))\geqslant d(f_m(A_1))\geqslant d(f_m(w_{i-1}),
f_m(w_{i}))\delta_*>0$$
for all $m=1,2,\ldots ,$ because $w_{i-1}, w_{i}\in A_1$ for every
$m\in {\Bbb N}.$

Now, $d(\partial(f_m(B(w_{i}, \varepsilon_0/2))))\geqslant
\delta_*>0$ for all $m=1,2,\ldots .$ Since the set $f_m(D)$ lies in
the compactum $K\subset {\Bbb R}^n,$ we obtain that
$h(\partial(f_m(B(w_{i}, \varepsilon_0/2))))\geqslant r_{**}>0$ for
all $m=1,2,\ldots $ and some $r_{**}>0$ while $\partial(f_m(B(w_{i},
\varepsilon_0/2)))$ is connected as a image of the connected set
$S(w_{i}, \varepsilon_0/2)$ under the mapping $f_m.$ Now, $f_m\in
\frak{A}^{\delta_*, r_{**}}_{A_1, Q}(B(w_{i}, \varepsilon_0/2))$ and
by Lemma~\ref{lem2} $h(f_m(A), \partial f_m(B(w_{i},
\varepsilon_0/2)))\geqslant \delta_1>0$ for some $\delta_1>0.$ Since
$w_{i}\in A_1,$ it follows that $h(f_m(w_{i}),
\partial f_m(B(w_{i}, \varepsilon_0/2)))\geqslant \delta_1>0$ and,
consequently, by Proposition~\ref{pr6} we have that $h(f_m(w_{i}),
\partial f_m(D))\geqslant \delta_1>0.$ Observe that $h(f_m(w_{i}),
\partial f_m(D))\leqslant h(f_m(w_{i}), \zeta_m),$ where $\zeta_m\in
\partial f_m(D)$ is from~(\ref{eq2A}). Now
\begin{equation}\label{eq3B}
h(f_m(w_{i}), \zeta_m)\geqslant \delta_1>0\,, m=1,2,\ldots .
\end{equation}
Now, since $h(f_m(w_{l-1}), f_m(w_l))\rightarrow 0$ as
$m\rightarrow\infty$ for any $i<l\leqslant k+1$ and simultaneously
$h(f_m(z_0), f_m(z_m))\rightarrow 0$ as $m\rightarrow\infty,$ by the
triangle inequality and~(\ref{eq3B}) we obtain that
$$h(f_m(z_m), \zeta_m)\geqslant h(\zeta_m, f_m(w_{i}))-h(f_m(w_{i}), f_m(z_m))\geqslant$$
\begin{equation}\label{eq4A}
\geqslant h(\zeta_m, f_m(w_{i}))-\sum\limits_{l=i+1}^{k+1}
h(f_m(w_{l-1}), f_m(w_l))-h(f_m(z_0), f_m(z_m))\geqslant
\delta_1/2>0
\end{equation}
since the latter two terms with a minus sign tend to zero by the
assumption in this subparagraph~$2_2).$ The relation~(\ref{eq4A})
contradicts with~(\ref{eq2A}). The latter completes the
consideration of item~$2_2).$ The proof is complete.~$\Box$
\end{proof}

\medskip
\begin{corollary}\label{cor5}
{\it\, The statement of Lemma~\ref{lem4} remains true if, instead of
the condition regarding the integrability of the function $Q$ over
spheres with respect to some set $E_1$ is replaced by a simpler
condition: $Q\in L^1(D).$}
\end{corollary}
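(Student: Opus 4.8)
\textbf{Proof proposal for Corollary~\ref{cor5}.}
The plan is to reduce Corollary~\ref{cor5} directly to Lemma~\ref{lem4}, exactly along the lines of the proofs of Corollaries~\ref{cor1} and~\ref{cor4}. The only hypothesis of Lemma~\ref{lem4} that is not literally included in the statement of the corollary is the condition on the integrability of $Q$ with respect to $\mathcal{H}^{n-1}$ over a set of spheres $S(x_0,r)$, $r\in E_1$, of positive linear Lebesgue measure; all other hypotheses (that $D$ is a domain in ${\Bbb R}^n$, that $A$ is a non-degenerate continuum in $D$, that $f_m\in\frak{B}^{\delta,K}_{A,Q}(D)$) are carried over verbatim. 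Thus it suffices to show that the hypothesis $Q\in L^1(D)$ implies this sphere-integrability condition.

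First I would fix an arbitrary point $x_0\in D$ and arbitrary radii $0<r_1<r_2<r_0:=\sup\limits_{x\in D}|x-x_0|$, and extend $Q$ by zero outside $D$, so that $Q\in L^1({\Bbb R}^n)$ with the same norm. By the Fubini theorem in polar coordinates (see, e.g., \cite[Theorem~8.1.III]{Sa}),
$$\int\limits_{r_1<|x-x_0|<r_2}Q(x)\,dm(x)=\int\limits_{r_1}^{r_2}\left(\,\int\limits_{S(x_0,r)}Q(x)\,d\mathcal{H}^{n-1}(x)\right)dr<\infty\,.$$
Hence the non-negative measurable function $r\mapsto\int_{S(x_0,r)}Q\,d\mathcal{H}^{n-1}$ is finite for almost every $r\in[r_1,r_2]$, so in particular it is finite on a subset $E_1\subset[r_1,r_2]$ of positive (indeed full) linear Lebesgue measure. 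This is precisely the sphere-integrability hypothesis of Lemma~\ref{lem4}. Since $x_0$ and $r_1<r_2$ were arbitrary, the hypothesis holds at every point of $D$ and for every admissible pair of radii.

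With this verified, Lemma~\ref{lem4} applies verbatim to the sequence $f_m\in\frak{B}^{\delta,K}_{A,Q}(D)$ and yields $\delta_1>0$ with $h(f_m(A),\partial f_m(D))\geqslant\delta_1$ for all $m=1,2,\ldots$, which is exactly the conclusion of Corollary~\ref{cor5}. There is no genuine obstacle here; the only point requiring the mildest care is the standard measure-theoretic remark that an $L^1$ function has almost-everywhere finite fiber integrals, which is an immediate consequence of Fubini/Tonelli and is handled exactly as in the proof of Corollary~\ref{cor1}. $\Box$
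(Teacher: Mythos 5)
Your proof is correct and matches the paper's approach exactly: the paper does not spell out a proof for Corollary~\ref{cor5}, but Corollaries~\ref{cor1} and~\ref{cor4} are handled via the same Fubini-theorem reduction, and Corollary~\ref{cor5} is clearly meant to follow in the identical way. Your only addition is making explicit that the set $E_1$ of good radii actually has full measure, which is a harmless refinement.
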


\medskip
Using Lemma~\ref{lem2}, we obtain the following.

\medskip
\begin{proposition}\label{pr4}
{\it\, Assume that all the conditions of Lemma~\ref{lem2} hold. Then
for any continuum $A^{\,*}$ in $D$ there is $\delta^{\,*}_1>0$ such
that $h(f_m(A^{\,*}),
\partial f_m(D))\geqslant \delta^{\,*}_1$ for any $m=1,2,\ldots .$}
\end{proposition}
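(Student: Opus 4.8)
The plan is to deduce Proposition~\ref{pr4} directly from Lemma~\ref{lem2} by enlarging the distinguished continuum so that it also contains $A^{\,*}.$ The key observation is that membership in the class $\frak{A}^{\delta, r}_{A, Q}(D)$ depends on $A$ only through the single inequality $h(f(A))\geqslant\delta,$ and this inequality is \emph{monotone} in the continuum: a larger continuum has a larger chordal image diameter. Hence, once $A$ is replaced by a continuum containing both $A$ and $A^{\,*},$ the mappings $f_m$ still belong to the corresponding class, Lemma~\ref{lem2} applies without change, and the desired lower bound for $h(f_m(A^{\,*}), \partial f_m(D))$ drops out because $A^{\,*}$ is contained in the new continuum.

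Concretely, I would first fix an arbitrary continuum $A^{\,*}\subset D.$ Since $D$ is a domain in ${\Bbb R}^n,$ it is polygonally connected, so I may choose a polygonal path $\gamma$ lying in $D$ joining some point of $A$ to some point of $A^{\,*}.$ Put $A^{\,**}:=A\cup|\gamma|\cup A^{\,*}.$ Then $A\cup|\gamma|$ is connected (both sets are connected and share an endpoint of $\gamma$), and adjoining $A^{\,*}$ (which meets $|\gamma|$ at the other endpoint) keeps it connected; the set is compact as a finite union of compacta, and it is non-degenerate since $A$ is. Thus $A^{\,**}$ is a non-degenerate continuum contained in $D,$ with $A\subset A^{\,**}$ and $A^{\,*}\subset A^{\,**}.$

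Next I would verify that $f_m\in\frak{A}^{\delta, r}_{A^{\,**}, Q}(D)$ for every $m.$ The moduli inequalities~(\ref{eq2*!A})--(\ref{eq8BC}) and the requirement $h(E)\geqslant r$ for every component $E$ of $\partial f_m(D)$ make no reference to the distinguished continuum, so they persist; and $A\subset A^{\,**}$ gives $f_m(A)\subset f_m(A^{\,**}),$ whence $h(f_m(A^{\,**}))\geqslant h(f_m(A))\geqslant\delta$ by monotonicity of the chordal diameter together with $f_m\in\frak{A}^{\delta, r}_{A, Q}(D).$ Now Lemma~\ref{lem2}, applied to the sequence $(f_m)$ with $A^{\,**}$ playing the role of $A$ (the hypotheses on $Q$ and on $D$ being unchanged), produces $\delta^{\,*}_1>0$ such that $h(f_m(A^{\,**}), \partial f_m(D))\geqslant\delta^{\,*}_1$ for all $m.$ Since $A^{\,*}\subset A^{\,**},$ we have $f_m(A^{\,*})\subset f_m(A^{\,**}),$ hence $h(f_m(A^{\,*}), \partial f_m(D))\geqslant h(f_m(A^{\,**}), \partial f_m(D))\geqslant\delta^{\,*}_1,$ which is exactly the claim. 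I do not expect any genuine obstacle here: the only points needing (elementary) care are that enlarging the continuum preserves membership in the class — immediate from monotonicity of $h$ — and that $A^{\,**}$ is again a non-degenerate continuum in $D,$ which follows from polygonal connectedness of the domain $D.$
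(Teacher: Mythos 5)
Your proof is correct and follows essentially the same route as the paper: both arguments enlarge $A$ to a continuum $E=A\cup C\cup A^{\,*}$ joining $A$ and $A^{\,*}$ by a path in $D,$ observe that $h(f_m(E))\geqslant h(f_m(A))\geqslant\delta$ keeps the enlarged continuum admissible for Lemma~\ref{lem2}, and then use $A^{\,*}\subset E$ to transfer the lower bound. The only difference is presentational — the paper phrases it as a proof by contradiction along a subsequence, while you argue directly — and your version is, if anything, a bit cleaner for making explicit that $E$ is again a non-degenerate continuum.
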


\medskip
\begin{proof}
Indeed, by Lemma~\ref{lem2} there exists $\delta_1>0$ such that
$h(f_m(A),
\partial f_m(D))\geqslant \delta_1$ for any $m=1,2,\ldots ,$ where
$A$ is a continuum from the conditions of the lemma. Assume the
contrary, then $h(f_{m_k}(A^{\,*}),
\partial f_{m_k}(D))\rightarrow 0$ for some increasing sequence of
numbers $m_k,$ $k=1,2,\ldots.$ Let us join the continua $A$ and
$A^{\,*}$ by a path $C$ in $D$ and consider the continuum $E:=A\cup
A^{\,*}\cup C.$ We have $h(f_{m_k}(E))\geqslant
h(f_{m_k}(A))\geqslant \delta$ for $k=1,2,\ldots ,$ so by
Lemma~\ref{lem2} there is $\delta_2>0$ such that $h(f_{m_k}(E),
\partial f_{m_k}(D))\geqslant \delta_2$ for any $k=1,2,\ldots .$
However, $h(f_{m_k}(E),
\partial f_{m_k}(D))\leqslant h(f_{m_k}(A^{\,*}),
\partial f_{m_k}(D))\rightarrow 0$ as $k\rightarrow\infty,$ because
$A^{\,*}\subset E.$ The obtained contradiction disproves the
assumption made above.~$\Box$
\end{proof}

\medskip
{\it Proof of Theorem~\ref{th1A}}. Let $C$ be a continuum in $D.$ By
Lemma~\ref{lem2} and Proposition~\ref{pr4}, there is
$\delta^{\,*}_1>0$ such that $h(f(C),
\partial f(D))\geqslant \delta^{\,*}_1$ for any $f\in\frak{A}^{\delta, r}_{A, Q}(D).$
Thus, $f\in\frak{A}^{\delta, r}_{A, Q}(D)\subset
\frak{F}^{\delta^{\,*}_1}_{C, Q}(D).$ In this case, the desired
statement follows directly from Theorems~\ref{th1} and
\ref{th2}.~$\Box$

\medskip
{\it Proof of Corollary~\ref{cor3}} is based on Theorem~\ref{th1A}
and is similar to the proof of Corollary~\ref{cor1}.~$\Box$

\medskip
\begin{example}
Let $D$ be a any proper subdomain of ${\Bbb R}^n,$ $n\geqslant 2.$
Due to the connectedness of ${\Bbb R}^n$ there is a point $y\in
\partial D.$ Given $m\in {\Bbb N},$ we set $f_m(x)=mx.$ Let $K\subset D,$
$K\ne \{0\},$ be a compact set. Let $x\in K,$ then
$f_m(x)\rightarrow\infty$ as $m\rightarrow\infty$ and also
$f_m(y)\rightarrow\infty$ as $m\rightarrow\infty.$ Now, $h(f_m(K),
\partial f_m(D))\rightarrow 0$ as $m\rightarrow\infty.$ Thus, the
statements of Lemmas~\ref{lem2} and~\ref{lem4} do not hold for
$f_m,$ $m=1,2,\,.$ Since the mappings $f_m$ are conformal for any
$m=1,2,\ldots ,$ they are ring $Q$-mappings for $Q\equiv 1$ (see,
e.g., \cite[Theorem~8.1]{Va}). If $D$ is bounded, then $Q\in
L^1(D),$ in particular, $Q$ is integrable on all spheres. Therefore,
the reason that the conclusions of these lemmas are not true is that
in Lemma~\ref{lem2}, $f_m\not\in \frak{A}^{\delta, r}_{A, Q}(D),$
because the condition ``$h(E)\geqslant r$ whenever $E$ is any
component of $\partial f_m(D)$'' is violated. Besides that, in
Lemma~\ref{lem4}, $f_m\not\in \frak{B}^{\delta, K_1}_{A, Q}(D),$
because given a compactum $K_1\subset \overline{{\Bbb R}^n},$
$K_1\ne \overline{{\Bbb R}^n},$ we have that $f_m(x)\not\in K_1$ for
some $x\in D$ and some $m\in {\Bbb N}.$ Observe that, the condition
$h(f_m(A))\geqslant \delta>0$ holds for any $m\in {\Bbb N}$ and some
a continuum $A$ in $D;$ for example, the latter hold if $A$ is a
closed unit ball centered at the origin.

\medskip
Observe also that the family $\{f_m\}_{m=1}^{\infty}$ is neither
uniformly open nor uniformly light. In particular, if $B$ is a ball
that does not contain the origin, then $h(f_m(B))\rightarrow 0$ as
$m\rightarrow\infty,$ because $f_m$ converges to $\infty$ uniformly
on $B.$ Finally, if $f_m(B(z_0, \varepsilon_1))\supset B_h(f_m(z_0),
r_0)$ for some $z_0\in {\Bbb R}^n$ and some $\varepsilon_1, r_0>0$
and all $m\in {\Bbb N},$ then $h(f_m(B(z_0,
\varepsilon_1)))\geqslant h(B_h(f_m(z_0), r_0))\geqslant r_0>0,$ but
this is not true due to the mentioned above.
\end{example}

\medskip
\begin{example}
Now we consider the example of mappings
from~\cite[Example~5.3]{SSD}. Let $p\geqslant 1,$ $n/p(n-1)<1.$ Put
$\alpha\in (0, n/p(n-1)).$ Let $f_m: B(0, 2)\rightarrow {\Bbb B}^n,$
$$f_m(x)\,=\,\left
\{\begin{array}{rr} \frac{(|x|-1)^{1/\alpha}}{|x|}\cdot x\,,
& 1+1/(m^{\alpha})\leqslant|x|\leqslant 2, \\
\frac{1/m}{1+(1/m)^{\alpha}}\cdot x\,, & 0<|x|<1+1/(m^{\alpha}) \ .
\end{array}\right.
$$
Using the approach applied in~\cite[Proposition~6.3]{MRSY} and
applying \cite[Theorems~8.1, 8.5]{MRSY} we may show that $f_m$
satisfy the relations~(\ref{eq2*!A})--(\ref{eq8BC}) in $B(0, 2)$
with $Q(x)=\frac{1}{\alpha}\cdot\frac{|x|}{(|x|-1)}$ for $x\in B(0,
2)\setminus \overline{{\Bbb B}^n},$ $Q(x)=1$ for $x\in {\Bbb B}^n.$
Observe that $f_m$ are homeomorphisms for all $m=1,2,\ldots,$ such
that $f_m(x)\rightarrow f(x)$ locally uniformly in $B(0, 2),$ where
$$f(x)\,=\,\left
\{\begin{array}{rr} \frac{(|x|-1)^{1/\alpha}}{|x|}\cdot x\,,
& 1\leqslant|x|\leqslant 2, \\
0\,, & 0<|x|<1 \ .
\end{array}\right.
$$
Obviously, the family $\{f_m\}_{m=1}^{\infty}$ is neither uniformly
open nor uniformly light in $B(0, 2),$ and the limit mapping of the
sequence $f_m,$ $m=1,2,\ldots ,$ is neither a homeomorphism nor a
constant. The reason that none of the conclusions of
Theorems~\ref{th1}--\ref{th1A} are satisfied is that the function
$Q$ is not integrable in the domain $D=B(0, 2),$ and moreover, is
not integrable at some family of spheres.  If we consider the domain
$B(0, 2)\setminus\{0\}$ as the domain $D,$ then we may see that the
statements of Lemmas~\ref{lem2} and~\ref{lem4} are also not
satisfied, since any continuum $K_1$, lying in $B(0, 1)\setminus
\{0\}$ converges to the origin under the mappings $f_m$, while $0\in
\partial f_m(D)=\partial (B(0, 1)\setminus\{0\}).$
\end{example}

\medskip
\begin{proposition}\label{pr5}
{\it\, Assume that all the conditions of Lemma~\ref{lem4} hold. Then
for any continuum $A^{\,*}$ in $D$ there is $\delta^{\,*}_1>0$ such
that $h(f_m(A^{\,*}),
\partial f_m(D))\geqslant \delta^{\,*}_1$ for any $m=1,2,\ldots .$}
\end{proposition}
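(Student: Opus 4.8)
The plan is to repeat, essentially word for word, the proof of Proposition~\ref{pr4}, with Lemma~\ref{lem2} replaced everywhere by Lemma~\ref{lem4}. First I would apply Lemma~\ref{lem4} directly: under the stated hypotheses there is $\delta_1>0$ such that $h(f_m(A), \partial f_m(D))\geqslant \delta_1$ for every $m=1,2,\ldots,$ where $A,$ $\delta$ and the compactum $K\ne\overline{{\Bbb R}^n}$ are the data appearing in the definition of the class $\frak{B}^{\delta, K}_{A, Q}(D)$ to which the $f_m$ belong.

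Then I would argue by contradiction. Suppose the conclusion fails for some continuum $A^{\,*}\subset D;$ then there is an increasing sequence $m_k$ with $h(f_{m_k}(A^{\,*}), \partial f_{m_k}(D))\to 0$ as $k\to\infty.$ Since $D$ is a domain it is pathwise connected, so I can choose a path $C$ in $D$ joining $A$ to $A^{\,*}$ and form the continuum $E:=A\cup A^{\,*}\cup C\subset D.$ The crucial observation is that the maps $f_m$ still lie in a class covered by Lemma~\ref{lem4} with $A$ replaced by $E$: each $f_m$ maps $D$ into the same compactum $K\ne\overline{{\Bbb R}^n},$ and $h(f_m(E))\geqslant h(f_m(A))\geqslant \delta$ because $A\subset E;$ the hypotheses on $Q$ (integrability with respect to $\mathcal{H}^{n-1}$ over the spheres $S(x_0,r)$ for $r$ in a set of positive linear measure) do not involve the continuum and so are unchanged. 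Hence $f_m\in\frak{B}^{\delta, K}_{E, Q}(D).$

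Applying Lemma~\ref{lem4} to the sequence $(f_{m_k})$ with the continuum $E,$ I obtain $\delta_2>0$ such that $h(f_{m_k}(E), \partial f_{m_k}(D))\geqslant \delta_2$ for all $k.$ On the other hand, since $A^{\,*}\subset E$ we have $h(f_{m_k}(E), \partial f_{m_k}(D))\leqslant h(f_{m_k}(A^{\,*}), \partial f_{m_k}(D))\to 0$ as $k\to\infty,$ which contradicts the preceding lower bound; this contradiction proves the proposition. I do not expect any genuine obstacle here: the only step requiring a moment's care is checking that enlarging $A$ to $E$ keeps us within the scope of Lemma~\ref{lem4}, and this is immediate once one notes that enlarging a continuum can only increase $h(f_m(\cdot))$ while leaving the remaining defining conditions untouched.
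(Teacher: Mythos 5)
Your proposal is correct and reproduces the paper's own proof of Proposition~\ref{pr5} essentially verbatim: the contradiction argument, the enlarged continuum $E=A\cup A^{\,*}\cup C$, and the re-application of Lemma~\ref{lem4} to $E$ are exactly the steps the authors use. The only difference is that you spell out explicitly that $f_m\in\frak{B}^{\delta,K}_{E,Q}(D)$, a check the paper leaves implicit.
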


\medskip
\begin{proof}
Indeed, by Lemma~\ref{lem4} there exists $\delta_1>0$ such that
$h(f_m(A),
\partial f_m(D))\geqslant \delta_1$ for any $m=1,2,\ldots ,$ where
$A$ is a continuum from the conditions of the lemma. Assume the
contrary, then $h(f_{m_k}(A^{\,*}),
\partial f_{m_k}(D))\rightarrow 0$ for some increasing sequence of
numbers $m_k,$ $k=1,2,\ldots.$ Let us join the continua $A$ and
$A^{\,*}$ by a path $C$ in $D$ and consider the continuum $E:=A\cup
A^{\,*}\cup C.$ We have $h(f_{m_k}(E))\geqslant
h(f_{m_k}(A))\geqslant \delta$ for $k=1,2,\ldots ,$ so by
Lemma~\ref{lem4} there is $\delta_2>0$ such that $h(f_{m_k}(E),
\partial f_{m_k}(D))\geqslant \delta_2$ for any $k=1,2,\ldots .$
However, $h(f_{m_k}(E),
\partial f_{m_k}(D))\leqslant h(f_{m_k}(A^{\,*}),
\partial f_{m_k}(D))\rightarrow 0$ as $k\rightarrow\infty,$ because
$A^{\,*}\subset E.$ The obtained contradiction disproves the
assumption made above.~$\Box$
\end{proof}

\medskip
The following statement holds.

\medskip
\begin{theorem}\label{th2A}
{\it\,Assume that, for each point $x_0\in D$ and for every
$0<r_1<r_2<r_0:=\sup\limits_{x\in D}|x-x_0|$ there is a set
$E_1\subset[r_1, r_2]$ of a positive linear Lebesgue measure such
that the function $Q$ is integrable with respect to
$\mathcal{H}^{n-1}$ over the spheres $S(x_0, r)$ for every $r\in
E_1.$

\medskip
1) If $D$ is bounded, then the family $\frak{B}^{\delta, K}_{A,
Q}(D)$ is equi-uniform (uniformly light) and uniformly open on any
compactum $K_1\subset D;$

\medskip
2) if $Q\in L^1(D)$ and $f(x)\ne\infty$ for every $x\in D,$ then for
any $K_1\subset D$ there exist constants $C, C_1>0$ such that
$$
|f(x)-f(y)|\geqslant C_1\cdot\exp\left\{-\frac{\Vert
Q\Vert_1}{C|x-y|^n}\right\}
$$
for all $x, y\in K_1$ and every $f\in \frak{A}^{\delta, r}_{A,
Q}(D).$ }
\end{theorem}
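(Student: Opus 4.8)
\textit{Proof proposal.} The plan is to reduce Theorem~\ref{th2A} to Theorems~\ref{th1} and~\ref{th2} by exactly the same mechanism used in the proof of Theorem~\ref{th1A}, the only difference being that the lower bound for the distance from $f(C)$ to $\partial f(D)$ is now furnished by Lemma~\ref{lem4} and Proposition~\ref{pr5} (which impose no condition on the components of $\partial f_m(D)$) rather than by Lemma~\ref{lem2} and Proposition~\ref{pr4}. Concretely, the goal is to show that for every compactum $K_1\subset D$ there is $\delta_1^{\,*}>0$ with $\frak{B}^{\delta, K}_{A, Q}(D)\subset \frak{F}^{\delta_1^{\,*}}_{K_1, Q}(D)$, after which Theorems~\ref{th1} and~\ref{th2} apply verbatim to $K_1$.

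First I would enclose $K_1$ in a continuum. Since $K_1$ is compact, it is covered by finitely many balls $B(z_i,\rho_i)$ with $\overline{B(z_i,\rho_i)}\subset D$; since $D$ is a domain, these balls can be joined by arcs lying in $D$, and the union $C$ of these balls and arcs is a continuum with $K_1\subset C\subset D$. Next I would argue by contradiction that there is $\delta_1^{\,*}>0$, depending only on $C$ (hence on $K_1$), $A$, $Q$, $\delta$ and $D$, such that $h(f(C),\partial f(D))\geqslant \delta_1^{\,*}$ for every $f\in\frak{B}^{\delta, K}_{A, Q}(D)$: otherwise one gets a sequence $f_m\in\frak{B}^{\delta, K}_{A, Q}(D)$ with $h(f_m(C),\partial f_m(D))\to 0$, while Lemma~\ref{lem4} applied to the continuum $A$ (the hypotheses of that lemma are precisely membership in $\frak{B}^{\delta, K}_{A,Q}(D)$ together with the sphere-integrability condition) and then Proposition~\ref{pr5} applied with $A^{\,*}:=C$ give $h(f_m(C),\partial f_m(D))\geqslant \delta_1^{\,*}>0$ for all $m$, a contradiction. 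Since $K_1\subset C$, it follows that $h(f(K_1),\partial f(D))\geqslant h(f(C),\partial f(D))\geqslant \delta_1^{\,*}$, i.e. $f\in\frak{F}^{\delta_1^{\,*}}_{K_1, Q}(D)$ for every $f\in\frak{B}^{\delta, K}_{A, Q}(D)$.

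With this inclusion in hand, part~1) follows by invoking Theorem~\ref{th1}.1) (equi-uniformity, hence uniform lightness, of $\frak{F}^{\delta_1^{\,*}}_{K_1, Q}(D)$ on $K_1$ for bounded $D$) together with Theorem~\ref{th2} (uniform openness of $\frak{F}^{\delta_1^{\,*}}_{K_1, Q}(D)$ on $K_1$ for bounded $D$); and part~2) follows from Theorem~\ref{th1}.2), which under $Q\in L^1(D)$ and $f(x)\ne\infty$ yields $|f(x)-f(y)|\geqslant C_1\exp\{-2\|Q\|_1/(C|x-y|^n)\}$ for $x,y\in K_1$, after which renaming $C/2$ as $C$ gives the stated estimate. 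I expect no serious obstacle here: the only point requiring genuine care is checking that the class $\frak{B}^{\delta, K}_{A, Q}(D)$ — homeomorphisms into a fixed compactum $K\ne\overline{{\Bbb R}^n}$ satisfying $(\ref{eq2*!A})$–$(\ref{eq8BC})$ with $h(f(A))\geqslant\delta$ — is exactly the setting of Lemma~\ref{lem4} and Proposition~\ref{pr5}, so that those results are applicable to an arbitrary sequence drawn from it; once that is granted, the argument is a near-verbatim copy of the proof of Theorem~\ref{th1A}.~$\Box$
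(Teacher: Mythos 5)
Your proposal is correct and follows essentially the same route as the paper: the paper's own proof is a one-liner invoking Lemma~\ref{lem4} and Proposition~\ref{pr5} to obtain the inclusion $\frak{B}^{\delta, K}_{A, Q}(D)\subset \frak{F}^{\delta_1}_{K_1, Q}(D)$ and then citing Theorems~\ref{th1} and~\ref{th2}, which is exactly your reduction (you simply fill in the contradiction argument and the enclosing continuum $C$ that the paper leaves implicit, and the cosmetic rescaling of the constant $C$ in the exponent).
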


\medskip
\begin{proof}
By Lemma~\ref{lem4} and Proposition~\ref{pr5}, $\frak{B}^{\delta,
K}_{A, Q}(D)\subset \frak{F}^{\delta_1}_{K_1, Q}(D)$ for some
$\delta_1>0.$ The desired conclusion follows now by
Theorems~\ref{th1} and~\ref{th2}.~$\Box$
\end{proof}

\medskip
Arguing similarly to the proof of Corollary~\ref{cor1}, by
Theorem~\ref{th2A} we obtain the following.

\medskip
\begin{corollary}\label{cor6}
{\it\, The statement of Theorem~\ref{th2A} remains true if, instead
of the condition regarding the integrability of the function $Q$
over spheres with respect to some set $E_1$ is replaced by a simpler
condition: $Q\in L^1(D).$}
\end{corollary}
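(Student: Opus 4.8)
\medskip
{\it Proof of Corollary~\ref{cor6} (plan).} The plan is to reduce the statement to Theorem~\ref{th2A} by verifying that the hypothesis $Q\in L^1(D)$ already forces the (a priori weaker-looking) condition appearing in Theorem~\ref{th2A}: for each $x_0\in D$ and each $0<r_1<r_2<r_0:=\sup\limits_{x\in D}|x-x_0|$ there is a set $E_1\subset[r_1,r_2]$ of positive linear Lebesgue measure such that $Q$ is integrable with respect to $\mathcal{H}^{n-1}$ over $S(x_0,r)$ for every $r\in E_1.$ This is precisely the reduction carried out for Corollaries~\ref{cor1}--\ref{cor3} and \ref{cor5}, so the argument should amount to a single application of the Fubini theorem.

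Concretely, I would first extend $Q$ by zero outside $D$, so that $Q\in L^1({\Bbb R}^n)$ and $Q\geqslant 0$ almost everywhere. Fixing an arbitrary $x_0\in D$ and arbitrary $0<r_1<r_2<r_0$, I would pass to spherical coordinates centered at $x_0$ and invoke the Fubini theorem (see, e.g., \cite[Theorem~8.1.III]{Sa}) to obtain
$$\int\limits_{r_1<|x-x_0|<r_2}Q(x)\,dm(x)=\int\limits_{r_1}^{r_2}\left(\,\int\limits_{S(x_0,r)}Q(x)\,d\mathcal{H}^{n-1}(x)\right)dr<\infty\,.$$
Since the left-hand side is finite, the inner integral $\int_{S(x_0,r)}Q\,d\mathcal{H}^{n-1}$ is finite for almost every $r\in[r_1,r_2]$; in particular it is finite on a subset $E_1\subset[r_1,r_2]$ of full, hence positive, linear Lebesgue measure. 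As $x_0,r_1,r_2$ were arbitrary, the hypothesis of Theorem~\ref{th2A} is satisfied, so all its conclusions follow: equi-uniformity (uniform lightness) and uniform openness of $\frak{B}^{\delta,K}_{A,Q}(D)$ on any compactum $K_1\subset D$ when $D$ is bounded, and the lower distortion estimate of part~2) when $Q\in L^1(D)$ and $f(x)\ne\infty$ on $D$.

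I do not expect any genuine obstacle here. The only points worth a remark are (i) the legitimacy of decomposing the Lebesgue integral into an iterated integral over concentric spheres for the nonnegative, a.e.-defined extension of $Q$ — this is the standard Tonelli statement for nonnegative measurable functions — and (ii) the trivial observation that a set of full measure is in particular a set of positive measure, so the wording of Theorem~\ref{th2A} is met. All the substantive content — the inclusion $\frak{B}^{\delta,K}_{A,Q}(D)\subset\frak{F}^{\delta_1}_{K_1,Q}(D)$ via Lemma~\ref{lem4} and Proposition~\ref{pr5}, followed by Theorems~\ref{th1}--\ref{th2} — is already packaged inside Theorem~\ref{th2A} and need not be repeated. $\Box$
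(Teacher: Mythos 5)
Your proposal is correct and matches the paper's argument: the paper proves Corollary~\ref{cor6} by the same reduction via the Fubini theorem to the sphere-integrability hypothesis of Theorem~\ref{th2A}, exactly as in the proof of Corollary~\ref{cor1}. No further comment is needed.
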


\medskip
\centerline{\bf 3. Convergence theorems.}

\medskip
{\it Proof of Theorem~\ref{th4}} Let us prove that $f$ is discrete.
Assume the contrary. Then there is $x_0 \in D$ and a sequence
$x_m\in D,$ $m=1,2,\ldots ,$ $x_m \ne x_0,$ such that
$x_m\rightarrow x_0$ as $m\rightarrow \infty$ and $f(x_m)=f(x_0).$
Observe that $E_0=\{x\in D: f(x)=f(x_0)\}$ is closed in $D$ by the
continuity of $f$ and does not coincide with $D,$ because $f\not
\equiv const.$ Thus, we may assume that $x_0$ may be replaced by non
isolated boundary point of $E_0.$

\medskip
Now, there are $\varepsilon_0>0,$ $0<\varepsilon_0<
\frac{1}{2}\cdot{\rm dist\,}(x_0,
\partial D),$ and $a, b\in B(x_0, \varepsilon_0),$
$a\ne b,$ such that $h(f(a), f(b)):=\delta>0.$ Since $f_m$ converges
to $f$ locally uniformly, $h(f_m(a), f_m(b))\geqslant \delta/2>0$
for sufficiently large $m\in {\Bbb N}.$ Observe that, the case
$0<r_1<r_2<{\rm dist\,}(x_0,
\partial D)$ includes the case $0<r_1<r_2<2\varepsilon_0=d(B(x_0, \varepsilon_0)).$
Let $A:=|\gamma|,$ where $\gamma$ is a path joining the points $a$
and $b$ in $B(x_0, \varepsilon_0).$ Now, $h(f_m(A))\geqslant
\delta>0$ for all $m=1,2,\ldots.$

\medskip
Since $f$ is continuous, the sets $f_m(B(x_0, \varepsilon_0))$ lie
in some neighborhood $U=f(B(x_0, \varepsilon_0))$ of a point $x_0.$
Increasing $\varepsilon_0,$ if required, we may assume that
$U\subset \overline{B_h(f(x_0), 1/2)}\ne \overline{{\Bbb R}^n}.$
Now, $f_m\in \frak{B}^{\delta, K}_{A, Q}(D)$ for
$K=\overline{B_h(f(x_0), 1/2)}.$ Using the inversion
$\psi(x)=\frac{x}{|x|^2}$ and considering mappings
$\widetilde{f}_m:=\psi\circ f_m,$ if required, we may consider that
$K\subset {\Bbb R}^n.$ Now, by Theorem~\ref{th2A} and
Corollary~\ref{cor6} there exist constants $C, C_1>0$ such that
\begin{equation}\label{eq1B}
|f_m(x)-f_m(y)|\geqslant C_1\cdot\exp\left\{-\frac{\Vert
Q\Vert_1}{C|x-y|^n}\right\}
\end{equation}
for all $x, y\in K,$ where $\Vert Q\Vert_1$ denotes the $L^1$-norm
of $Q$ in $B(x_0, \varepsilon_0).$

Let us prove the discreteness $f$ by the contradiction. Indeed, $f$
is not discrete, then there is a point $x_0\in D$ and a sequence
$x_k\in G,$ $x_k \ne x_0,$ $k=1,2\ldots,$ such that $x_k \rightarrow
x_0$ as $k\rightarrow \infty$ with $f(x_k)=f(x_0).$ Passing to the
limit as $m\rightarrow\infty$ in~(\ref{eq1B}), we obtain that
$$|f(x)-f(x_0)|\geqslant C_1
\exp\left\{-\frac{2\Vert Q\Vert_1}{C|x-x_0|^n}\right\}\,.$$
In particular,
$$0=|f(x_k)-f(x_0)|\geqslant
C_1\exp\left\{-\frac{2\Vert Q\Vert_1}{C|x_k-x_0|^n}\right\}>0\,.$$
The above contradiction proves the discreteness of $f.$ Finally, $f$
is a homeomorphism due to~\cite[Theorem~3.1]{RS}.

\medskip
Let now $f_m(x)\ne \infty$ for all $x\in D.$ It remains to show that
$f$ either a homeomorphism $f:D\rightarrow {\Bbb R}^n,$ or a
constant $c\in \overline{{\Bbb R}^n}.$ Assume that, $f$ is not a
constant $c\in \overline{{\Bbb R}^n}.$ We need to show that $f$ is a
homeomorphism $f:D\rightarrow {\Bbb R}^n.$

Let $E_0$ be a set of points $x$ in $D$ where $f(x)=\infty.$ Now,
$E_0$ is closed in $D$ and $E_0\ne D.$ We show that
$E_0=\varnothing.$ In the contrary case, there is $x_0\in D\cap
\partial E_0.$ Let $\varepsilon_1>0$ be such that $\overline{B(x_0,
\varepsilon_1)}\subset D.$ Arguing as above, we may show that
$f_m\in \frak{B}^{\delta, K}_{A, Q}(D)$ for $K=\overline{B_h(f(x_0),
1/2)},$ some a continuum $A$ and some $\delta>0.$ Now, by
Theorem~\ref{th2A} the family $f_m,$ $m=1,2,\ldots ,$ is uniformly
open, that is there is $r_*>0,$ which does not depend on $m,$ such
that $B_h(f_m(x_0), r_*)\subset f_m(B(x_0, \varepsilon_1)),$
$m=1,2,\ldots,$ for some $r_*>0.$ Let $y\in B_h(f(x_0),
r_*/2)=B_h(f(x_0), r_*/2).$ By the convergence of $f_m$ to $f$ and
by the triangle inequality, we obtain that
$$h(y, f_m(x_0))\leqslant h(y, f(x_0))+h(f(x_0), f_m(x_0))<r_*/2+r_*/2=r_*$$
for sufficiently large $m\in {\Bbb N}.$ Thus,
$$B_h(f(x_0), r_*/2)\subset B_h(f_m(x_0), r_*)\subset f_m(B(x_0,
\varepsilon_1))\subset {\Bbb R}^n\,.$$
In particular, $y_0=f(x_0)\in {\Bbb R}^n,$ as required. However,
$f(x_0)=\infty$ because $x_0\in \partial E_0$ and $E_0$ is closed.
Thus, $E_0=\varnothing,$ as required. Theorem~\ref{th4} is
proved.~$\Box$

\medskip
Given a domain $D$ in ${\Bbb R}^n,$ $n\geqslant 2,$ $\delta>0$ and
$a, b\in D$ we denote by $\frak{K}^{\delta}_{a, b, Q}$ a class of
all homeomorphisms $f:D\rightarrow \overline{{\Bbb R}^n}$ satisfying
(\ref{eq2*!A})--(\ref{eq8BC}) at every point $x_0\in D$ for every
$0<r_1<r_2<r_0:={\rm dist\,}(x_0, \partial D)$ such that $h(f(a),
f(b))\geqslant \delta>0.$

\medskip
\begin{theorem}\label{th5}
Let $Q\in L_{\rm loc}^1(D).$ Then the family $\frak{K}^{\delta}_{a,
b, Q}$ is closed in the topology of the locally uniform convergence.
In other words, if $f_m\in \frak{K}^{\delta}_{a, b, Q}$ converges to
some $f:D\rightarrow\overline{{\Bbb R}^n}$ locally uniformly, then
$f$ is a homeomorphism $f:D\rightarrow \overline{{\Bbb R}^n}$
satisfying the relations (\ref{eq2*!A})--(\ref{eq8BC}) at every
point $x_0\in D$ for every $0<r_1<r_2<r_0:={\rm dist\,}(x_0,
\partial D);$ besides that, $h(f(a), f(b))\geqslant \delta>0.$
\end{theorem}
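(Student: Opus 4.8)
The plan is to deduce Theorem~\ref{th5} from Theorem~\ref{th4} together with a standard lower‑semicontinuity property of the conformal modulus. Observe first that the homeomorphisms $f_m\in\frak{K}^{\delta}_{a,b,Q}$ satisfy, in particular, all the hypotheses of Theorem~\ref{th4}: they fulfil (\ref{eq2*!A})--(\ref{eq8BC}) at every $x_0\in D$ for every $0<r_1<r_2<{\rm dist}\,(x_0,\partial D)$, and $Q\in L_{\rm loc}^1(D)$. Hence, by Theorem~\ref{th4}, the locally uniform limit $f$ is either a homeomorphism $f:D\rightarrow\overline{{\Bbb R}^n}$ or a constant $c\in\overline{{\Bbb R}^n}$. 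So the whole matter reduces to two points: excluding the constant, and showing that $f$ inherits (\ref{eq2*!A})--(\ref{eq8BC}).

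To exclude the constant, I would use the condition $h(f_m(a),f_m(b))\geqslant\delta$, which holds for every $m$ by the definition of $\frak{K}^{\delta}_{a,b,Q}$. Since $f_m\rightarrow f$ locally uniformly, in particular $f_m(a)\rightarrow f(a)$ and $f_m(b)\rightarrow f(b)$ in the chordal metric, and since $h$ is continuous on $\overline{{\Bbb R}^n}\times\overline{{\Bbb R}^n}$, we obtain $h(f(a),f(b))=\lim\limits_{m\rightarrow\infty}h(f_m(a),f_m(b))\geqslant\delta>0$. In particular $f(a)\ne f(b)$, so $f$ is not a constant, and therefore, by Theorem~\ref{th4}, $f$ is a homeomorphism $f:D\rightarrow\overline{{\Bbb R}^n}$. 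This argument simultaneously yields the last assertion of the theorem, $h(f(a),f(b))\geqslant\delta$.

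It remains to verify that $f$ satisfies (\ref{eq2*!A})--(\ref{eq8BC}) at each $x_0\in D$ and for each $0<r_1<r_2<r_0:={\rm dist}\,(x_0,\partial D)$. Fix such $x_0,r_1,r_2$, write $S_i=S(x_0,r_i)$ and $A=A(x_0,r_1,r_2)$, and fix a measurable $\eta:(r_1,r_2)\rightarrow[0,\infty]$ with $\int_{r_1}^{r_2}\eta(r)\,dr\geqslant1$; put $J:=\int_{A\cap D}Q(x)\,\eta^n(|x-x_0|)\,dm(x)$. Since each $f_m\in\frak{K}^{\delta}_{a,b,Q}$, we have $M(f_m(\Gamma(S_1,S_2,D)))\leqslant J$ for every $m$, and $J$ does not depend on $m$; so it suffices to prove the lower‑semicontinuity estimate
$$M(f(\Gamma(S_1,S_2,D)))\leqslant\liminf\limits_{m\rightarrow\infty}M(f_m(\Gamma(S_1,S_2,D)))\,.$$
Granting it, we get $M(f(\Gamma(S_1,S_2,D)))\leqslant J$, i.e. $f\in\frak{K}^{\delta}_{a,b,Q}$, and the proof is complete. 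This semicontinuity is a standard fact for locally uniformly convergent sequences of homeomorphisms whose limit is again a homeomorphism (cf. the convergence theorems \cite[Theorems~21.9, 21.11]{Va}, \cite[Theorem~7.7]{MRSY} and \cite[Theorem~4.2]{RS}); I would prove it by first noting that $f_m^{\,-1}\rightarrow f^{\,-1}$ locally uniformly on $f(D)$ (every compactum of $f(D)$ lies in $f_m(D)$ for all large $m$, with uniform convergence there), and then using weak $L^n$-compactness of a sequence of admissible functions for $f_m(\Gamma(S_1,S_2,D))$ with bounded $L^n$-norms to produce an almost‑admissible function for $f(\Gamma(S_1,S_2,D))$ with controlled norm. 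I expect this semicontinuity step, which is independent of any integrability hypothesis on $Q$, to be the only genuinely technical part of the argument; everything else is immediate from Theorem~\ref{th4} and the continuity of $h$.
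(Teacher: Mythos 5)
Your proposal follows essentially the same route as the paper: apply Theorem~\ref{th4} to conclude $f$ is a homeomorphism or a constant, pass to the limit in $h(f_m(a),f_m(b))\geqslant\delta$ to rule out the constant and obtain $h(f(a),f(b))\geqslant\delta$, and then argue that the moduli inequality~(\ref{eq2*!A})--(\ref{eq8BC}) is preserved in the limit. The only difference is that where you sketch a semicontinuity-of-modulus argument for the last step, the paper simply cites~\cite[Theorem~5.1]{RS} for the closedness of ring $Q$-homeomorphisms under locally uniform convergence, which is the precise packaged form of that fact.
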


\medskip
\begin{proof}
Under the notions of Theorem~\ref{th5}, $f$ is a homeomorphism by
Theorem~\ref{th4}. In addition, letting into the limit in the
condition $h(f_m(a), f_m(b))\geqslant \delta>0$ as
$m\rightarrow\infty,$ we obtain $h(f(a), f(b))\geqslant \delta>0,$
as required. Finally, $f$ is a ring $Q$-homeomorphism by Theorem~5.1
in \cite{RS}.~$\Box$
\end{proof}

\medskip
{\it Proof of Theorem~\ref{th6}}. Let us prove that $f$ is discrete.
Assume the contrary. Then there is $x_0 \in D$ and a sequence
$x_k\in D,$ $k=1,2,\ldots ,$ $x_k \ne x_0,$ such that
$x_k\rightarrow x_0$ as $k\rightarrow \infty$ and $f(x_k)=f(x_0).$
Observe that $E_0=\{x\in D: f(x)=f(x_0)\}$ is closed in $D$ by the
continuity of $f$ and does not coincide with $D,$ because $f\not
\equiv const.$ Thus, we may assume that $x_0$ may be replaced by non
isolated boundary point of $E_0.$

\medskip
Now, there are $\varepsilon_0>0,$ $0<\varepsilon_0<
\frac{1}{2}\cdot{\rm dist\,}(x_0,
\partial D),$ and $a, b\in B(x_0, \varepsilon_0),$
$a\ne b,$ such that $h(f(a), f(b)):=\delta>0.$ Since $f_m$ converges
to $f$ locally uniformly, $h(f_m(a), f_m(b))\geqslant \delta/2>0$
for sufficiently large $m\in {\Bbb N}.$ Observe that, the case
$0<r_1<r_2<{\rm dist\,}(x_0,
\partial D)$ includes the case $0<r_1<r_2<2\varepsilon_0=d(B(x_0, \varepsilon_0)).$
Let $A:=|\gamma|,$ where $\gamma$ is a path joining the points $a$
and $b$ in $B(x_0, \varepsilon_0).$ Now, $h(f_m(A))\geqslant
\delta>0$ for all $m=1,2,\ldots.$

\medskip
Since $f$ is continuous, the sets $f_m(B(x_0, \varepsilon_0))$ lie
in some neighborhood $U=f(B(x_0, \varepsilon_0))$ of a point $x_0.$
Increasing $\varepsilon_0,$ if required, we may assume that
$U\subset \overline{B_h(f(x_0), 1/2)}\ne \overline{{\Bbb R}^n}.$
Now, $f_m\in \frak{B}^{\delta, K}_{A, Q}(D)$ for
$K=\overline{B_h(f(x_0), 1/2)}.$ Using the inversion
$\psi(x)=\frac{x}{|x|^2}$ and considering mappings
$\widetilde{f}_m:=\psi\circ f_m,$ if required, we may consider that
$K\subset {\Bbb R}^n.$ Now, by Theorem~\ref{th2A} the family
$\{f_m\}_{m=1}^{\infty}$ is equi-uniform on any compactum $K_1.$ We
may set $K_1:=\overline{B(x_0, \varepsilon_0/2)},$ now $x_k, y_k\in
K_1$ for sufficiently large $k=1,2,\ldots .$

\medskip
Now, by the triangle inequality,
$$|f(x_k)-f(x_0)|=|f(x_k)-f_k(x_k)+f_m(x_k)-f_m(x_0)+f_k(x_0)-f(x_0)|\geqslant$$
\begin{equation}\label{eq1E}
\geqslant |f_m(x_k)-f_m(x_0)|-|f(x_k)-f_m(x_k)|-|f_m(x_0)-f(x_0)|\,.
\end{equation}
Since $|x_k-x_0|:=\varepsilon_k>0$ and the family
$\{f_m\}_{m=1}^{\infty}$ is equi-uniform, it follows that
$|f_m(x_k)-f_m(x_0)|\geqslant \delta_k>0$ for some $\delta_k>0.$ Fix
$k\in {\Bbb N}$ and letting in~(\ref{eq1E}) to the limit as
$m\rightarrow\infty,$ we obtain that
\begin{equation}\label{eq1C}
|f(x_k)-f(x_0)|\geqslant |f_m(x_k)-f_m(x_0)|\geqslant \delta_k>0\,.
\end{equation}
The relation~(\ref{eq1C}) contradicts the assumption $f(x_k)=f(x_0)$
for sufficiently large $k\in {\Bbb N}.$ Thus, $f$ is a discrete, as
required. Finally, $f$ is a homeomorphism due
to~\cite[Theorem~3.1]{RS}.

\medskip
Let now $f_m(x)\ne \infty$ for all $x\in D.$ It remains to show that
$f$ either a homeomorphism $f:D\rightarrow {\Bbb R}^n,$ or a
constant $c\in \overline{{\Bbb R}^n}.$ Assume that, $f$ is not a
constant $c\in \overline{{\Bbb R}^n}.$ We need to show that $f$ is a
homeomorphism $f:D\rightarrow {\Bbb R}^n.$

Let $E_0$ be a set of points $x$ in $D$ where $f(x)=\infty.$ Now,
$E_0$ is closed in $D$ and $E_0\ne D.$ We show that
$E_0=\varnothing.$ In the contrary case, there is $x_0\in D\cap
\partial E_0.$ Let $\varepsilon_1>0$ be such that $\overline{B(x_0,
\varepsilon_1)}\subset D.$ Arguing as above, we may show that
Arguing as above, we may show that $f_m\in \frak{B}^{\delta, K}_{A,
Q}(D)$ for $K=\overline{B_h(f(x_0), 1/2)},$ some a continuum $A$ and
some $\delta>0.$ Now, by Theorem~\ref{th2A} the family $f_m,$
$m=1,2,\ldots ,$ is uniformly open, that is there is $r_*>0,$ which
does not depend on $m,$ such that $B_h(f_m(x_0), r_*)\subset
f_m(B(x_0, \varepsilon_1)),$ $m=1,2,\ldots,$ for some $r_*>0.$ Let
$y\in B_h(f(x_0), r_*/2)=B_h(f(x_0), r_*/2).$ By the convergence of
$f_m$ to $f$ and by the triangle inequality, we obtain that
$$h(y, f_m(x_0))\leqslant h(y, f(x_0))+h(f(x_0), f_m(x_0))<r_*/2+r_*/2=r_*$$
for sufficiently large $m\in {\Bbb N}.$ Thus,
$$B_h(f(x_0), r_*/2)\subset B_h(f_m(x_0), r_*)\subset f_m(B(x_0,
\varepsilon_1))\subset {\Bbb R}^n\,.$$
In particular, $y_0=f(x_0)\in {\Bbb R}^n,$ as required. However,
$f(x_0)=\infty$ because $x_0\in \partial E_0$ and $E_0$ is closed.
Thus, $E_0=\varnothing,$ as required. Theorem~\ref{th6} is proved.

\medskip
{\bf Declarations.}

\medskip
{\bf Funding.}  No funds, grants, or other support was received.

\medskip
{\bf Conflicts of interest.} The author has no financial or
proprietary interests in any material discussed in this article.

\medskip
{\bf Availability of data and material.} The datasets generated
and/or analysed during the current study are available from the
corresponding author on reasonable request.

{\footnotesize

}

\end{document}